\newcommand{\cal}{\mathcal}
\newcommand{\ovl}{\overline}
\newcommand{\tnm}{\textnormal}
\newcommand{\mb}{\mathbf}
\newcommand{\C}{\mathbb C}
\newcommand{\F}{\mathbb F}
\newcommand{\R}{\mathbb R}
\newcommand{\N}{\mathbb N}
\newcommand{\la}{\langle}
\newcommand{\ra}{\rangle}
\newcommand{\al}{\alpha}
\newcommand{\be}{\beta}
\newcommand{\ga}{\gamma}
\newcommand{\Ga}{\Gamma}
\newcommand{\del}{\delta}
\newcommand{\Del}{\Delta}
\newcommand{\lam}{\lambda}
\newcommand{\sig}{\sigma}
\newcommand{\vp}{\varphi}
\newcommand{{\ve}}{\varepsilon}
\newcommand{\pt}{\partial}
\newcommand{\U}{{\bf U}}
\newcommand{\SO}{{\bf SO}}
\newtheorem{thm}{\textsc{Theorem}}[section]
\newtheorem{lem}[thm]{Lemma}
\newtheorem{prop}[thm]{Proposition}
\theoremstyle{definition}
\newtheorem{defi}[thm]{Definition}
\theoremstyle{remark}
\newtheorem{rem}[thm]{Remark}
\numberwithin{equation}{section}
\numberwithin{figure}{section}
\title{Large-$N$ Limit of the Segal--Bargmann Transforms on the Spheres}
\author{Luan M. Doan}
\email{ldoan3@nd.edu}
\address{University of Notre Dame du Lac, Department of Mathematics, IN 46556, USA}
\date{\today}
\thanks{The author would like to thank Professor Brian Hall and Dr. Evan O'Dorney for helpful comments and suggestions.}
\keywords{Segal--Bargmann transforms, large-$N$ limit, spheres}
\begin{document}

\begin{abstract}
	We study the large-$N$ limit of the Segal--Bargmann transform on $S^{N-1}(\sqrt N)$, the $(N-1)$-dimensional sphere of radius $\sqrt N$, as a unitary map from the space of square-integrable functions with respect to the normalized spherical measure onto the space of holomorphic square-integrable functions with respect to a certain measure on the quadric. In particular, we give an explicit formulation and describe the geometric models for the limit of the domain, the limit of the range, and the limit of the transform when $N$ tends to infinity. We show that the limiting transform is still a unitary map from the limiting domain onto the limiting range.        
\end{abstract}

\maketitle

\section{Introduction}\label{chap:intro}
	The Segal--Bargmann transforms, also known as the coherent state transforms, have been popular subjects of study in mathematical physics since the second half of the last century. The original formulation of the transforms was motivated by quantum mechanics and was developed by Bargmann \cites{Bar1,Bar2} and Segal \cites{Se1,Se2} for the Euclidean case $\R^N$. From then on, there have been several ways of generalization of the transforms to different manifolds. In this work, we use the formulation in Hall \cite{Hall94} and Stenzel \cite{Stenzel99} for Euclidean spaces, compact Lie groups, and compact symmetric spaces, which can be described briefly as follows: for any function $f\colon X\to \C$ that is square-integrable with respect to a certain measure on a Lie group or a symmetric space $X$, the Segal--Bargmann transform of $f$ is a holomorphic function on the complexified space $X_\C$ of $X$, obtained by first applying the heat operator to $f$, then analytically continuing the result to the whole $X_\C$. A special case where this construction works is the sphere $S^{N-1}$, with explicit formulas worked out by Hall and Mitchell \cite{HallMit2002}. 
	
	One of the recent interesting topics of research is to study the large-$N$ limits of different families of Segal--Bargmann transforms, such as the works by Biane  \cite{Biane97} and by Driver, Hall and Kemp \cite{DHK2013} for $\U(N)$, by Gordina \cite{Gordina2000} for $\SO(N)$, by Chan \cite{Chan20} for classical compact groups, and by Olafsson and Wiboonton \cite{OW2013} for symmetric spaces. More precisely, let $\{G^N\}_{N=1}^\infty$ be a family of compact matrix Lie groups or symmetric spaces.  One then lets $N$ tend to infinity and observe if the Segal--Bargmann transform on $G^N$ tends to some linear transformation.
	
	\smallskip
	
	In case $\{G^N\}_{N=1}^\infty$ is a family of symmetric spaces, the work of Olafsson and Wiboonton \cite{OW2013} considers these spaces as a sequence of nested manifolds. In case of the spheres, this would mean $G^N=S^N$ is the \emph{unit} $N$-dimensional sphere and $S^{N-1}$ is a submanifold of $S^N$ in the usual way. The authors then describe the large-$N$ limit of the Segal--Bargmann transforms on $G^N$'s using a direct limit construction. However, there is no geometric model for the limiting transform with this construction. The difficulty can be traced to the fact that the volume measure on the \emph{unit} sphere does not have a natural limit as the dimension tends to infinity.
	
	In this work, we approach the problem of large-$N$ limit on the spheres from a different angle. Instead of working with the family of unit spheres, we consider the spheres $S^{N-1}(\sqrt N)$ of radius $\sqrt N$. In particular, we do not consider them as a nested sequence of manifolds. Umemura and Kono \cite{UmKo65} and Petersen and Sengupta \cite{PeSen} observed that this consideration leads to a desirable scaling of the spherical Laplacian $\Del_{S^{N-1}(\sqrt N)}$, which has a limit that is the \emph{Hermite differential operator} as $N$ tends to infinity. In addition, it is a very old observation from statistical mechanics that the \emph{normalized} volume measure $\ovl\sig^{N-1}$ of $S^{N-1}(\sqrt N)$ converges to the standard Gaussian measure $\mu^\infty_1$ on $\R^\infty$ (\cites{Boltzmann,UmKo65,PeSen}). Using these results, we formulate the large-$N$ limit of Segal--Bargmann transform on the sphere $S^{N-1}(\sqrt N)$ as the exponential of the Hermite differential operator and explicitly construct its range. Our result not only is different from that of \cite{OW2013}, but it also provides us with a geometric interpretation of both the limiting transform and the limiting range. Our approach is also desirable because it applies to all polynomials, which are dense in $L^2\big(S^{N-1}(\sqrt N),\ovl\sig^{N-1}\big)$, as compared to the result for $\U(N)$ in \cite{Biane97} and \cite{DHK2013} working only with class functions.
	
	\smallskip
	
	The structure of this paper is as follows. In Sect.~\ref{chap:summary}, we provide some background results that help to formulate the Main Theorem (Thm.~\ref{thm:main}). In Sect.~\ref{chap:prelim}, we discuss the two-parameter Segal--Bargman transform on the Euclidean space $\R^k$, denoted by $\cal B^k_{s,t}$ (Thm.~\ref{thm:heat-real-general}) and its action on polynomials (Thm.~\ref{thm:heat-powerseries}). In Sect.~\ref{chap:coherent-states-sphere}, we review important properties of the spherical Laplacian $\Del_{S^{N-1}(\sqrt N)}$ and the \emph{invariant Segal--Bargmann transform} acting on $L^2\big(S^{N-1}(\sqrt N),\ovl\sig^{N-1}\big)$ (Thm.~\ref{thm:SB-sphere}). In Sect.~\ref{sec:large-N-sphere}, we remind the reader of the large-$N$ limit of $\Del_{S^{N-1}(\sqrt N)}$ and the spherical measure $\ovl \sig^{N-1}(\sqrt N)$ (Thm.~\ref{thm:limitsph} and Prop.~\ref{prop:Sen-Hermite}). In Sect.~\ref{chap:proof}, we provide the proof of the Main Theorem \ref{thm:main} by considering separately the limit of the range of the Segal--Bargmann transform (Thm.~\ref{thm:limitquad}), and the limit of the transform itself (Thms.~\ref{thm:horizontal_arrow} and \ref{thm:limitmap}).
	
\section{Statements of Main Results}\label{chap:summary}
	
	Consider the following $(N-1)$-sphere of radius $\sqrt N$ centered at the origin of $\R^N$,
	\begin{align*}
		S^{N-1}(\sqrt N)=\left\{\mb x=(x_1,\dots,x_N)\in\R^N\colon x_1^2+\dots+x_N^2=N\right\},
	\end{align*}
	and the space of square-integrable functions on $S^{N-1}(\sqrt N)$with respect to the normalized spherical volume measure $\ovl\sig^{N-1}$, denoted by $L^2(S^{N-1}(\sqrt N),\ovl\sig^{N-1})$. We want to study the behavior of the Segal--Bargmann transform on $L^2(S^{N-1}(\sqrt N),\ovl\sig^{N-1})$ when $N$ is large. There are three main ingredients in the problem. 
	\begin{enumerate}[(I)]
		\item 	The scaling of the radius by $\sqrt{N}$ is important for the measures $\ovl \sig^{N-1}$ to have some notion of convergence. It is a well-known result (see \cite{PeSen} and the references therein) that with this scaling, if one consider $S^{N-1}(\sqrt N)\subset\R^N$ as embedded in the space of infinite real sequences $\R^\infty$, then the measure $\ovl{\sig}^{N-1}$ converges to $\mu^\infty_1$, the infinite product of the standard Gaussian measure $\mu^1_1(dx)=(2\pi)^{-\frac12} e^{-\frac{x^2}2}\,dx$ on $\R$. (The superscript stands for the dimension and the subscript $1$ stands for unit variance).
		
		Specifically, let $p$ be a polynomial of fixed $k$ variables $x_1,\dots, x_k$, considered as a function both on $S^{N-1}(\sqrt N)$ ($N\ge k$) and on $\R^\infty$. Then 
		\begin{align*}
			\lim_{N\to\infty} \int_{S^{N-1}(\sqrt N)} p\,d\ovl\sig^{N-1}=\int_{\R^\infty} p\,d\mu^\infty_1.
		\end{align*}
		\item As $N\to\infty$, the spherical Laplacian $\Del_{S^{N-1}(\sqrt N)}$ tends to the differential Hermite operator ${\bf H}$ given by
		\begin{align*}
			{\mb H} =\sum_{j=1}^\infty \frac{\pt^2}{\pt x_j^2}-\sum_{j=1}^\infty x_j\frac {\pt}{\pt x_j}. 
		\end{align*}
		This means for any polynomial $p$, considered as a function both on the spheres and on $\R^\infty$ as in (I), we have
		\begin{align*}
			\lim_{N\to\infty} \Del_{S^{N-1}(\sqrt N)}\,p= {\mb H}\;p.
		\end{align*}
		\item The Segal--Bargmann transform (of parameter $T>0$) on the sphere $S^{N-1}(b)$ of radius $b>0$, with an explicit construction in \cite{HallMit2002}, is a unitary map from $L^2(S^{N-1}(b))$ onto some Hilbert space of holomorphic functions on the quadric defined via the heat operator $e^{\frac{T}{2}\Del_{S^{N-1}(b)}}$ induced by the Laplacian $\Del_{S^{N-1}(b)}$.   	
	\end{enumerate}
	
	\subsection{Segal--Bargmann transforms on $S^{N-1}(\sqrt N)$}	
	Without introducing too many technical points, we give a brief description of the Segal--Bargmann transforms on the spheres $S^{N-1}(\sqrt N)$  to state the Main Theorem. More details of the formulations can be found in Sect.~\ref{chap:coherent-states-sphere}.
	
	\medskip
	

	The quadric 
	\begin{align*}
		Q^{N-1}(\sqrt N)=\{\mb a=(a_1,\dots,a_N)\in\C^N\colon \mb a\cdot \mb a=a_1^2+\dots+a_N^2=N\}
	\end{align*}
	is the complexification of $S^{N-1}(\sqrt N)$.
	
	For any $T>0$, there is a measure $\nu^{N-1}_T$ on the quadric $Q^{N-1}(\sqrt N)$ given by the fundamental solution of the heat equation and the volume measure of the $(N-1)$-hyperbolic space of sectional curvature $-\frac{1}{N}$. Denote by  $\cal HL^2\big(Q^{N-1}(\sqrt N),\nu^{N-1}_{T}\big)$ the Hilbert space of \emph{holomorphic} functions on $Q^{N-1}(\sqrt N)$ that are square-integrable with respect to $\nu^{N-1}_T$.

	The Segal--Bargmann transform $\cal C^{N-1}_{T}$ with time $T>0$ on the sphere $S^{N-1}(\sqrt N)$ is a unitary map from $L^2\big(S^{N-1}(\sqrt N),\ovl{\sig}^{N-1}\big)$ onto $\cal HL^2\big(Q^{N-1}(\sqrt N),\nu^{N-1}_{T}\big)$. It is given by the composition of the \emph{heat operator} $e^{\frac{T}{2}\Del_{S^{N-1}(\sqrt N)}}$ with the holomorphic extension operation. That is,
	\begin{align*}
		\cal C^{N-1}_{T}f = \left(e^{\frac{T}{2}\Del_{S^{N-1}(\sqrt N)}}f\right)_\C,\quad f\in L^2(S^{N-1}(\sqrt N),\ovl{\sig}^{N-1}),
	\end{align*}
	where $(\cdot)_\C$ denotes the holomorphic extenstion of a sufficiently nice function from $S^{N-1}(\sqrt N)$ to $Q^{N-1}(\sqrt N)$.In particular, one has 
	\begin{align*}
	\|f\|_{L^2(S^{N-1}(\sqrt N),\ovl \sig^{N-1})}
		=\left\|\cal C^{N-1}_T\,f\right\|_{\cal HL^2(Q^{N-1}(\sqrt N),\nu^{N-1}_T)}.
	\end{align*}
	
	When $f$ is a polynomial on $\R^N$ considered as a function on the sphere $S^{N-1}(\sqrt N)$, the heat operator can be computed as a power series expansion of the exponential operator:
	\begin{align*}
		e^{\frac{T}{2}\Del_{S^{N-1}(\sqrt N)}}f = \sum_{n=0}^\infty \frac{1}{n!} \left(\frac{T}{2}\Del_{S^{N-1}(\sqrt N)}\right)^n f.
	\end{align*}
	
	\subsection{Main results}\label{sec:main-res}
	Briefly speaking, when one lets $N\to\infty$, there are three convergence phenomena. Firstly, the measure $\ovl\sig^{N-1}$ converges to the Gaussian $\mu_1^\infty$ in the sense of Ingredient (I). Secondly, the measure $\nu^{N-1}_T$ also converges to some Gaussian measure defined on the space of complex sequences $\C^\infty$. Finally, since $\Del_{S^{N-1}(\sqrt N)}$ converges to the Hermite operator $\mb H$, the heat operator $e^{\frac{T}{2}\Del_{S^{N-1}(\sqrt N)}}$ converges to the exponential operator $e^{\frac T2 \mb H}$. 
	
	\bigskip 
	
	More rigorously, for any $T>0$, let $\ga^1_T$ be the Gaussian measure on $\C^1$ given by 
	\begin{align}\label{eqn:meas-gamma}
		\ga^1_T(du+i\,dv)= (\pi^2(e^{2T}-1))^{-\frac 12}e^{-\frac{u^2}{e^T+1}-\frac{v^2}{e^T-1}}\,du\,dv,\quad u,v\in\R.
	\end{align}
	Let $\ga^\infty_T$ denote the infinite product of the measure $\ga^1_T$, and let the space $\cal H L^2(\C^\infty,\ga^\infty_T)$ denote the closure of the set of \emph{holomorphic polynomials} with respect to $\ga^\infty_T$.
	
	\medskip 
	
	We want to establish the following theorem for the remainder of this work.	
	\begin{thm}\label{thm:main} Fix some $T>0$.
		\begin{enumerate} [(1)]
			\item There exists a unique map $\mb B_T$ acting on $L^2(\R^\infty,\mu^\infty_1)$ determined by
			\begin{align*}
				\mb B_T\,q=\left(e^{\frac{T}{2}\mb H}\,q\right)_\C,\qquad q\text{ is a polynomial of finitely many variables},
			\end{align*}
			such that $\mb B_T$ is a unitary map from $L^2(\R^\infty,\mu^\infty_1)$ onto $\cal HL^2(\C^\infty,\ga^\infty_T)$. 
			\item The measure $\nu^{N-1}_T$ converges to the infinite product measure $\ga^\infty_k$ in the following sense: for any polynomial $q$ of fixed $2k$ variables $a_1,\dots, a_k, \ovl a_1,\dots,\ovl a_k$ considered as a function both on $Q^{N-1}(\sqrt N)$, with $N\ge k$, and on $\C^\infty$, we have
			\begin{align*}
				\lim_{N\to \infty} \int_{Q^{N-1}(\sqrt N)}q\,d\nu^{N-1}_T = \int_{\C^\infty} q\,d\ga^\infty_T.
			\end{align*}
			\item The Segal--Bargmann transform $\cal C^{N-1}_T$ converges to the map $\mb B_T$ as $N\to\infty$ in the following sense: for any polynomial $q$ of fixed $k$ variables $x_1,\dots, x_k$ considered as a function both on the spheres $S^{N-1}(\sqrt N)$, with $N\ge k$, and on $\R^\infty$, we have 
			\begin{align*}
				\lim_{N\to \infty} \cal C^{N-1}_T\,q = \mb B_T\,q.
			\end{align*}
			In particular,
			\begin{align*}
				\lim_{N\to\infty}\|q\|_{L^2(S^{N-1}(\sqrt N),\ovl \sig^{N-1})}
				=\lim_{N\to \infty} \left\|\cal C^{N-1}_T\,q\right\|_{\cal HL^2(Q^{N-1}(\sqrt N),\nu^{N-1}_T)}
				= \left\|\mb B_T\,q\right\|_{\cal HL^2(\C^\infty,\ga^\infty_T)}.
			\end{align*} 
		\end{enumerate}
	\end{thm}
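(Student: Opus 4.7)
My plan is to establish the three items in the order (1), (3), (2): first construct the limiting map $\mb B_T$ abstractly via the finite-dimensional Segal--Bargmann theory, then pass the heat-operator limit through to obtain $\cal C^{N-1}_T q\to \mb B_T q$, and finally use the unitarity of each $\cal C^{N-1}_T$ backwards to convert that convergence into convergence of the measures $\nu^{N-1}_T\to\ga^\infty_T$ against polynomial test functions.

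For (1), observe that if $q$ depends only on $x_1,\dots,x_k$ then so does $\mb H q$, because the unused partial derivatives annihilate $q$. Hence $\mb H$ acts on $q$ as its finite-dimensional truncation $\mb H_k=\sum_{j=1}^k(\pt^2/\pt x_j^2-x_j\,\pt/\pt x_j)$, which preserves the finite-dimensional space $V_{k,d}$ of polynomials of degree $\le d=\deg q$. The series $e^{\frac T2\mb H}q$ is therefore well-defined and yields a polynomial, and its value does not depend on how many idle variables one carries along, so $\mb B_T$ is a consistent linear map on the polynomial algebra. The finite-dimensional Segal--Bargmann transform of Thms.~\ref{thm:heat-real-general}--\ref{thm:heat-powerseries}, with parameters matched to $\ga^k_T$, is precisely $q\mapsto (e^{\frac T2\mb H_k}q)_\C$, and it is unitary from $L^2(\R^k,\mu^k_1)$ onto $\cal HL^2(\C^k,\ga^k_T)$; combined with the product structure of $\mu^\infty_1$ and $\ga^\infty_T$ (the idle factors integrate to $1$), this shows $\mb B_T$ is an isometry on polynomials. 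Polynomials are dense in $L^2(\R^\infty,\mu^\infty_1)$ by the standard Hermite-basis argument for product Gaussians, while the image of $\mb B_T$ contains every holomorphic polynomial (since $e^{\frac T2\mb H_k}$ is invertible on polynomials) and these are dense in the target by definition. Extending by continuity produces the required unitary $\mb B_T$.

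For (3), the key step is that $\Del_{S^{N-1}(\sqrt N)}$, restricted to $V_{k,d}$, is a linear endomorphism whose matrix in the monomial basis has entries rational in $1/N$ and converges as $N\to\infty$ to the matrix of $\mb H$; this upgrades Ingredient~(II) from pointwise convergence to convergence of matrices on a fixed finite-dimensional space. Because matrix exponentiation is continuous in the entries, $e^{\frac T2\Del_{S^{N-1}(\sqrt N)}}q\to e^{\frac T2\mb H}q$ inside $V_{k,d}$, and the holomorphic extension of a polynomial is just evaluation at a complex point, so $\cal C^{N-1}_T q\to \mb B_T q$ pointwise on $\C^\infty$. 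The norm identity then follows immediately by chaining the unitarity of $\cal C^{N-1}_T$ (Thm.~\ref{thm:SB-sphere}), the isometry of $\mb B_T$ from~(1), and Ingredient~(I) applied to $|q|^2$.

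For (2), any polynomial $q$ in $a_1,\dots,a_k,\ovl a_1,\dots,\ovl a_k$ is a finite sum of products $f\,\ovl g$ with $f,g$ holomorphic polynomials, so by linearity and conjugation it suffices to treat pairings $\int f\,\ovl g\,d\nu^{N-1}_T$. Unitarity of $\cal C^{N-1}_T$ rewrites this pairing as
\begin{align*}
\int_{S^{N-1}(\sqrt N)} p_N\,r_N\,d\ovl\sig^{N-1},
\end{align*}
where $p_N=(e^{-\frac T2\Del_{S^{N-1}(\sqrt N)}}\tilde f)|_{S^{N-1}(\sqrt N)}$ and $r_N$ is defined analogously from $g$; here $\tilde f$ denotes $f$ viewed as a real polynomial in $x_1,\dots,x_k$, and the backward heat series terminates on $V_{k,d}$ so poses no issue. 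Running the argument of the previous paragraph with $T$ replaced by $-T$ gives $p_N\to e^{-\frac T2\mb H}\tilde f$ in $V_{k,d}$ and similarly for $r_N$. Since $p_N r_N$ stays in a fixed finite-dimensional polynomial space with coefficients converging, Ingredient~(I) sends the integrals to $\int_{\R^\infty}(e^{-\frac T2\mb H}\tilde f)(e^{-\frac T2\mb H}\tilde g)\,d\mu^\infty_1$, which by unitarity of $\mb B_T$ equals $\int_{\C^\infty}f\,\ovl g\,d\ga^\infty_T$. The main obstacle is the finite-dimensional structural statement invoked at the start of the previous paragraph: checking via an explicit formula for $\Del_{S^{N-1}(\sqrt N)}$ on polynomials — combining $\Del_{\R^N}$ with radial-type terms of the form $\frac{1}{N}D^2$ and $\frac{N-2}{N}D$ for $D=\sum x_j\,\pt/\pt x_j$ — that iterates stay inside $V_{k,d}$ and that the matrix entries converge rationally in $1/N$ to those of $\mb H$; once this is in hand, everything else is finite-dimensional linear algebra together with the two convergence ingredients already available.
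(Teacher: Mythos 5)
Your part (1) contains a genuine gap, and it is load-bearing for the rest of the argument. You assert that the finite-dimensional transform of Thm.~\ref{thm:heat-real-general}, ``with parameters matched to $\ga^k_T$, is precisely $q\mapsto(e^{\frac T2\mb H}q)_\C$.'' This is false as stated: every $\cal B^k_{s,t}$ is the holomorphic extension of a \emph{pure} heat operator $e^{\frac t2\Del_{\R^k}}$, whereas $e^{\frac T2\mb H}$ contains the drift $-(r\pt_r)_k$; moreover no choice of $(s,t)$ makes the domain $L^2(\R^k,\mu^k_1)$ and the target $\cal HL^2(\C^k,\ga^k_T)$ simultaneously (matching $\xi^k_{s,t}=\ga^k_T$ forces $s=e^T$, $t=e^T-1$). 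The missing ingredient is exactly the substance of the paper's proof of (1): the commutator identity $[(r\pt_r)_k,\Del_{\R^k}]=-2\Del_{\R^k}$ (Lem.~\ref{lem:commutative-relation}) and the Baker--Campbell--Hausdorff factorization
\begin{align*}
e^{\frac T2(\Del_{\R^k}-(r\pt_r)_k)}=e^{-\frac T2(r\pt_r)_k}\,e^{\frac{1-e^{-T}}2\Del_{\R^k}},
\end{align*}
together with the fact that the dilation $e^{-\frac T2(a\pt_a)_k}$ is a unitary from $\cal HL^2(\C^k,\xi^k_{1,1-e^{-T}})$ onto $\cal HL^2(\C^k,\ga^k_T)$ (Lem.~\ref{lem:real-SB-with-dilation}, Prop.~\ref{prop:isometry-quad}). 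Without this, the unitarity of $\mb B_T$ onto $\cal HL^2(\C^\infty,\ga^\infty_T)$ is unproved, and both your (3) (the norm chain) and your (2) (which invokes unitarity of $\mb B_T$) inherit the gap. The remainder of your (1) --- consistency over idle variables, product structure of the measures, density of polynomials, BLT extension --- agrees with the paper, as does your (3).

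Your part (2), once (1) is repaired, is a genuinely different and considerably lighter route than the paper's. The paper proves $\nu^{N-1}_T\to\ga^\infty_T$ directly, by representing $\int_{Q^{N-1}(\sqrt N)}q\,d\nu^{N-1}_T$ as $\int_{S^{N-1}(\sqrt N)}e^{\frac TN\Ga_N}q\,d\ovl\sig^{N-1}$ via integration by parts against the hyperbolic heat kernel (Props.~\ref{prop:symmetric-op}--\ref{prop:heat-integral}, requiring the Davies estimates and cutoff functions) and then a second BCH computation identifying the limit with $\ga^k_T$. You instead decompose $q$ into pairings $f\ovl g$ of holomorphic polynomials and pull each pairing back through the unitary $\cal C^{N-1}_T$ to the sphere, where Thm.~\ref{thm:limitsph} applies; this works because the inverse transform on a holomorphic polynomial is restriction to the sphere followed by the backward \emph{matrix} exponential $e^{-\frac T2\Del_{S^{N-1}(\sqrt N)}}$ on the finite-dimensional space $\cal P^{\le l}_k(\R)$ (convergent, not terminating --- $\Del_{S^{N-1}(\sqrt N)}$ is not nilpotent there), and the matrix-convergence argument of (3) with $T$ replaced by $-T$ applies. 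Two small repairs are needed: the sphere-side integrand must be $p_N\,\ovl{r_N}$, not $p_N\,r_N$ (these restrictions are complex-valued), and you should verify that holomorphic polynomials lie in $\cal HL^2(Q^{N-1}(\sqrt N),\nu^{N-1}_T)$ so the polarized unitarity identity applies --- this follows from the exponential growth bound \eqref{eqn:fq-estimate} against the Gaussian decay of $\theta^{N-1}_{\sqrt N}(2T,2p)$ in Prop.~\ref{prop:estimate-heat-hyperbolic}. What your route buys is the complete avoidance of the hyperbolic integration-by-parts machinery; what the paper's route buys is a proof of the measure convergence that is independent of the horizontal arrows of the diagram.
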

	
	\medskip
	
	One can understand the theorem using the following commutative diagram:
	\begin{center}
		\begin{tikzcd}[row sep=huge, column sep=10em]
			L^2\big(S^{N-1}(\sqrt{N}),\ovl \sig^{N-1}\big)\arrow[r, "\tiny{\left(e^{\frac{T}{2}\Del_{S^{N-1}(\sqrt N)}}\ \underline{\hspace{1em}}\right)_\C}"] \arrow[d, dashed, "N\to \infty"]
			& \cal H L^2\big(Q^{N-1}(\sqrt{N}),\nu^{N-1}_{T}\big) \arrow[d, dashed, "N\to\infty"] \\
			L^2(\R^\infty,\mu^\infty_1) \arrow[r, "\tiny{\left(e^{\frac{T}{2}\mb H}\ \underline{\hspace{1em}}\right)_\C}"]
			&  \cal H L^2(\C^\infty,\ga^\infty_T) 
		\end{tikzcd}
	\end{center}
	where each of the horizontal maps is a unitary map from the $L^2$-space on the left-hand side onto the holomorphic $L^2$-space on the right-hand side, and the vertical limits are limits of measures  understood in the sense of polynomials as in Thm.~\ref{thm:main} (2) and (3). 	
	
\section{Summary of Segal--Bargmann Transforms on Euclidean Spaces}\label{chap:prelim}
In the sequel, to be consistent with the choice of notations in \cite{HallMit2002}, we will denote complex variables (especially the quadric variables) by $a_1,a_2,\dots$ instead of $z_1,z_2,\dots$. We denote by $\N$ the set of natural numbers (including 0) and by $\N^*$ the set of positive integers. For two complex vectors complex vectors $\mb a=(a_1,\dots,a_k),\mb b=(b_1,\dots, b_k)$ in $\C^k$, we denote by $\mb a\cdot \mb b$ the \emph{real} dot product $\sum_{j=1}^ka_jb_j$. We also write $\mb a^2$ for the dot product $\mb a\cdot \mb a$. Note that the $\C^k$-norm of the vector $\mb a$ satisfies $|\mb a|^2=\mb a\cdot\ovl{\mb a}$.

\medskip

Let us first look at the Segal--Bargmann transform on the space of square-integrable functions on $\R^k$ for any $k\in\N^*$. 
\subsection{Two-parameter Segal--Bargmann transforms}\label{sec:two-parameter-euclidean}
The Gaussian ${\mu^k_t(\mb x)=(2\pi t)^{-\frac k2}e^{-\frac{\mb x^2}{2t}}}$ is the fundamental solution to the \emph{heat equation}
\begin{align}
	\frac{\pt}{\pt t} K(\mb x,t)=\frac12 \Del_{\R^k} K(\mb x,t),\qquad \mb x\in\R^k, t>0\label{eqn:heat-real}.
\end{align}

The \emph{heat-kernel measure} on $\R^k$ given by 
\begin{align*}
	\mu^k_t(d\mb x)=\mu^k_t(\mb x)\,d\mb x=(2\pi t)^{-\frac k2}e^{-\frac{\mb x^2}{2t}}\,d\mb x 
\end{align*}
is a probability measure with mean $0$ and variance $t$, which plays an important role both in theory of differential equations and in probability. The \emph{heat operator} $e^{\frac t2\Del_{\R^k}}$ applying to a function $f\colon \R^k\to \C$ is the convolution
\begin{align}
	\left(e^{\frac t2 \Del_{\R^k}} f\right)(\mb x):=(f*\mu^k_t)(\mb x)=\int_{\R^k} f(\mb y)\mu^k_t(\mb x-\mb y)\,d\mb y
	=\int_{\R^k} f(\mb x-\mb y)\mu^k_t(\mb y)\,d\mb y,
	\label{eqn:heat-inhomogeneous}
\end{align}
whenever this integral is convergent for all $\mb x\in \R^k$.

Let $\cal H(\C^k)$ denote the space of entire holomorphic functions on $\C^k$, and let $\cal HL^2(\C^k,m)$ or $\cal HL^2(\C^k,dm)$ denote the set of holomorphic functions on $\C^k$ that are square integrable with respect to the measure $m$. 

There have been many studies on Segal--Bargmann transforms in the Euclidean case. Notable results in this topic concern when the heat operators are isometries from $L^2(\R^k,\mu^k_t)$ into some Hilbert spaces that are subspaces of $\cal H(\C^k)$ (see {\cite{Hall97,Hall2000}}). In this work, we are also interested in a form of the Segal--Bargmann transform on $\R^k$ where the variance of the Gaussian defining the domain $L^2$-space is different from the time of the heat operator. The equivalent version for compact Lie groups is given in \cite[Sect.~3]{Hall99}.

\begin{thm}
	\label{thm:heat-real-general} For $k\in\N^*$, real numbers $s,t$ with $0<t<2s$, and real vectors $\mb u,\mb v\in\R^k$, define the following probability measure on $\C^k=\R^{2k}$:
	\begin{align*}
		\xi^k_{s,t}(d\mb u+i\,d\mb v)=\xi^k_{s,t}(\mb u+i\mb v)\,d\mb u\,d\mb v:=(\pi (2s-t))^{-\frac k2}(\pi t)^{-\frac k2}e^{-\frac{\mb u^2}{2s-t}}e^{-\frac{\mb v^2}{t}}\,d\mb u\,d\mb v.
	\end{align*}	
	Define the \emph{Segal--Bargmann transform $\cal B^k_{s,t}$} acting on the Hilbert space $L^2(\R^k,\mu^k_s)$ as follows:  for any $f$ in $L^2(\R^k,\mu^k_s)$, the function $\cal B^k_{s,t}f$ is given by
	\begin{align}\label{eqn:heat-real-transform}
		\left(\cal B_{s,t}^k\,f\right)(\mb z):=\left(e^{\frac t2\Del_{\R^k}}f\right)_\C(\mb z)=(2\pi t)^{-\frac k2}\int_{\R^k}f(\mb y)e^{-\frac{(\mb z-\mb y)^2}{2t}}\,d\mb y,\quad \mb z\in\C^k,
	\end{align}
	where $F_\C$ denotes the holomorphic extension of a function $F\colon \R^k\to \C$ to $\C^k$.
	Then
	\begin{enumerate}[(i)]
		\item The set $\cal HL^2(\C^k,\xi^k_{s,t})$ is a Hilbert subspace of $L^2(\C^k,\xi^k_{s,t})$.
		\item For any $f$ in  $L^2(\R^k,\mu^k_s)$, the function $\cal B_{s,t}^k\,f$ is entire holomorphic, and $\cal B^k_{s,t}$ is a unitary map from $L^2(\R^k,\mu^k_s)$ onto $\cal HL^2(\C^k,\xi^k_{s,t})$.
	\end{enumerate}
\end{thm}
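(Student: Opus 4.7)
The plan has four steps: show that $\cal HL^2(\C^k,\xi^k_{s,t})$ is closed in $L^2(\C^k,\xi^k_{s,t})$; verify that $\cal B^k_{s,t}f$ is well-defined and entire holomorphic for every $f\in L^2(\R^k,\mu^k_s)$; establish the isometry on a dense subalgebra and extend it by continuity; and finally prove surjectivity. For the first step (which is part (i)), I would use the standard reproducing-kernel argument: the density of $\xi^k_{s,t}$ is continuous and strictly positive, so on any polydisc $D(\mb z_0,R)\subset\C^k$ it is bounded below by a constant $c(\mb z_0,R)>0$. Combining this lower bound with the mean-value inequality for holomorphic functions yields a pointwise estimate
\begin{align*}
|F(\mb z_0)|^2\le \frac{1}{c(\mb z_0,R)\,\tnm{Vol}(D)}\,\|F\|^2_{L^2(\C^k,\xi^k_{s,t})},\qquad F\in\cal HL^2(\C^k,\xi^k_{s,t}),
\end{align*}
so any $L^2$-Cauchy sequence in $\cal HL^2$ converges uniformly on compact subsets of $\C^k$ and the limit is holomorphic by the Weierstrass theorem.

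For the existence and entireness of $\cal B^k_{s,t}f$, I would bound $|e^{-(\mb z-\mb y)^2/(2t)}|=e^{\mb v^2/(2t)}e^{-(\mb u-\mb y)^2/(2t)}$ for $\mb z=\mb u+i\mb v$ and apply Cauchy--Schwarz against $\mu^k_s(d\mb y)$. The resulting Gaussian integrand $e^{\mb y^2/(2s)-(\mb u-\mb y)^2/t}$ in $\mb y$ is integrable precisely because $t<2s$, and locally uniformly in $\mb z$; this delivers both absolute convergence of \eqref{eqn:heat-real-transform} and, by differentiation under the integral sign, the entireness of $\cal B^k_{s,t}f$.

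The heart of the proof is the isometry. I would verify it first on the total family of exponentials $f_\lambda(\mb x)=e^{\lambda\cdot\mb x}$ indexed by $\lambda\in\C^k$. A direct heat-operator computation gives $\cal B^k_{s,t}f_\lambda(\mb z)=e^{t\lambda^2/2}e^{\lambda\cdot\mb z}$. The Gaussian inner product $\langle f_\lambda,f_\mu\rangle_{L^2(\R^k,\mu^k_s)}$ is elementary and evaluates to $e^{s(\lambda+\bar\mu)^2/2}$; on the quadric side the product structure $\xi^k_{s,t}=\mu^k_{(2s-t)/2}(d\mb u)\otimes\mu^k_{t/2}(d\mb v)$ splits the inner product into two real Gaussian integrals in $\mb u$ and $\mb v$, and after routine algebra collapses to the same expression $e^{s(\lambda+\bar\mu)^2/2}$. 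Density of the exponential algebra in $L^2(\R^k,\mu^k_s)$ (equivalent to completeness of the Hermite polynomials) then extends the isometry to all of $L^2$.

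For surjectivity, note that $e^{(t/2)\Del_{\R^k}}$ is an invertible endomorphism of the polynomial algebra on $\R^k$ (its formal inverse $e^{-(t/2)\Del_{\R^k}}$ is a finite-order differential operator on any given polynomial), so $\cal B^k_{s,t}$ carries polynomials onto holomorphic polynomials in $\mb z$. The remaining task, which I expect to be the main obstacle, is to show that holomorphic polynomials are dense in $\cal HL^2(\C^k,\xi^k_{s,t})$. When $s\ne t$ the monomials $\mb z^\alpha$ are not orthogonal under the anisotropic Gaussian $\xi^k_{s,t}$, so one cannot quote classical Fock-space orthogonality directly. The cleanest remedy is to transport the $L^2(\R^k,\mu^k_s)$-Hermite orthonormal basis through $\cal B^k_{s,t}$: by the isometry this produces an orthonormal family of holomorphic polynomials in $\cal HL^2(\C^k,\xi^k_{s,t})$, and totality is confirmed by showing that any $F\in\cal HL^2$ orthogonal to every monomial must vanish, using the pointwise bound from (i) to justify a term-by-term Taylor expansion of $F$ at the origin. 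Combined with the already-established isometry, this proves $\cal B^k_{s,t}$ is unitary.
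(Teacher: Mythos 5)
The paper never proves Theorem~\ref{thm:heat-real-general}; it imports it from Hall's work (\cite{Hall97}, \cite{Hall2000}, \cite{Hall99}), so there is no internal proof to compare against. Your outline reproduces the standard argument from that literature, and most of it is sound: the reproducing-kernel bound for part (i), the Cauchy--Schwarz estimate showing that $t<2s$ is exactly the integrability condition for \eqref{eqn:heat-real-transform}, and the isometry computation on the exponentials $f_\lambda$ (the Gaussian algebra checks out: both inner products equal $e^{s(\lambda+\bar\mu)^2/2}$, using the product decomposition $\xi^k_{s,t}=\mu^k_{(2s-t)/2}(d\mb u)\otimes\mu^k_{t/2}(d\mb v)$). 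One point you gloss over: after extending the isometry from the exponential algebra by continuity, you must check that the abstract extension agrees with the integral formula for general $f$; this does follow from your step-two bound, which shows $f\mapsto(\cal B^k_{s,t}f)(\mb z)$ is continuous on $L^2(\R^k,\mu^k_s)$ locally uniformly in $\mb z$, but it should be said.

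The genuine gap is exactly where you predicted it: density of holomorphic polynomials in $\cal HL^2(\C^k,\xi^k_{s,t})$. Your proposed totality argument --- $F$ orthogonal to every monomial forces $F=0$ via a term-by-term Taylor expansion --- does not close, and for the very reason you yourself flag: the monomials are not orthogonal under the anisotropic Gaussian (e.g.\ $\int z^2\,d\xi^1_{s,t}=s-t\neq 0$ when $s\neq t$). In the isotropic case orthogonality makes $\int F\,\bar z^\alpha\,d\xi$ a nonzero multiple of the single Taylor coefficient $c_\alpha$, so vanishing of all these pairings kills $F$ immediately; here the same pairing produces an infinite coupled linear system $\sum_\beta c_\beta\langle z^\beta,z^\alpha\rangle=0$, and neither the interchange of sum and integral nor the triviality of the kernel of that system follows from the pointwise bound in (i). A correct completion either runs the dilation argument ($F_r(\mb z)=F(r\mb z)\to F$ in $L^2$ as $r\uparrow 1$, with Cauchy-estimate control of the Taylor tails of $F_r$), or reduces to the isotropic case, or simply cites the density result the paper itself leans on: Rem.~\ref{rem:holomorphic} invokes \cite[Thm 3.6]{DH99} for precisely this fact.
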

Note that elements in the space $\cal H L^2(\C^k,\xi^k_{s,t})$ are actual holomorphic functions and not just equivalence classes of almost-everywhere defined functions.

\begin{rem}	\label{rem:heat-operator-real}
		We have for each fixed $\mb x$
		\begin{align*}
			\lim_{s\to \infty}  (2\pi s)^{\frac k2}\mu^k_{s}(\mb x)=\lim_{s\to\infty} e^{-\frac{\mb x^2}{2s}}=1,
		\end{align*}
		so one can interpret the measure $(2\pi s)^{\frac k2}\,d\mu^k_{s}$ when $s\to\infty$ as the Lebesgue measure $d\mb x$ on $\R^k$, while the measure $(2\pi s)^{\frac k2}\,d\xi^k_{s,t}(\mb u,\mb v)$ tends to the product measure $d\mb u\,d\mu^k_{t/2}(\mb v)$. The Hilbert space $\cal HL^2\big(\C^k,d\mb u\,d\mu^k_{t/2}(\mb v)\big)$ is the set of entire holomorphic functions $F$ where 
		\begin{align*}
			\|F\|^2_{\cal HL^2(\C^k,d\mb u\,d\mu^k_{t/2}(\mb v))}=(\pi t)^{-\frac{k}{2}}\int_{\R^{2k}} |F(\mb u+i\mb v)|^2e^{-\frac{\mb v^2}{t}}\,d\mb u\,d\mb v<\infty.
		\end{align*}
		The Segal--Bargmann transform defined on the right-hand side of \eqref{eqn:heat-real-transform} is then a unitary map from $L^2(\R^k,d\mb x)$ onto $\cal HL^2(\C^k,\mu^k_{t/2}(\mb v)\,d\mb v\,d\mb u)$. This is known as the ``$C$-version" or the \emph{invariant} Segal--Bargmann transform  in \cite{Hall2000}.
\end{rem}

\subsection{Actions of Segal--Bargmann transforms on polynomials} \label{sec:action-polynomials}
Let us introduce the following notations:
\begin{enumerate}[(i)]
	\item $\cal P_k(\R)$: the set of polynomials of $k$ real variables $x_1,\dots, x_k$,
	\item $\cal P_k(\C)$: the set of polynomials of $2k$ complex variables $a_1,\dots, a_k$, $\bar a_1,\dots,\bar a_k$,
	\item $\cal P^{\le l}_k(\C)$: the set of polynomials in $\cal P_k(\C)$ of degree at most $l$,
	\item $\cal P^{\le l}_k(\R):=\cal P_k(\R)\cap \cal P^{\le l}_k(\C)$,
	\item $\cal H\cal P_k(\C):=\cal H(\C^k)\cap \cal P_k(\C)$,
	\item $\cal H\cal P^{\le l}_k(\C):=\cal H(\C^k)\cap \cal P^{\le l}_k(\C)$.
\end{enumerate}
We allow all polynomials (of real or complex variables) to have complex coefficients. We have several important remarks that will be assumed from now on:
\begin{rem}\label{rem:polynomials}
	\begin{enumerate}[(1)]
		\item For all $k\in\N^*$, we identify $\R^k$ as a subspace of $\R^\infty$ and identify $\cal P_{k}(\F)$ as a subspace of $\cal P_{N}(\F)$ for all $k\le N$, where $\F=\R$ or $\F=\C$, in the obvious way. 		
		
		\item For a fixed $k$ let $\pt_r=\frac{\pt}{\pt r}$ denote the partial derivative with respect to the radius $r=\sqrt{x_1^2+\dots+x_k^2}$ of a real function written in polar coordinates. Then, by chain rule
		\begin{align*}
			(r\pt_r)_k:=r\frac{\pt}{\pt r}=\sum_{j=1}^k x_j\frac{\pt}{\pt x_j}.
		\end{align*}
		This is the well-known \emph{Euler identity}, and the rightmost differential operator is the \emph{Cauchy--Euler operator}. 
		
		For any $N\ge k$, one can easily see that the two operators $\Del_{\R^N}$ and $(r\pt_r)_N$ actually do not depend on $N$ when acting on a function $f\in C^2(\R^k)$ of $k$ variables $x_1,\dots, x_k$ considered as a function on $\R^N$, in particular, any element of $\cal P_k(\R)$. More explicitly, 
		\begin{align*}
			\Del_{\R^k} f=\Del_{\R^N} f\quad\tnm{and}\quad (r\pt_r)_k f=(r\pt_r)_N f.
		\end{align*} 
	
		\item Similarly, the Gaussian measures $\mu^N_t$ are independent of $N$ if we are integrating a function $f$ of fixed $k\le N$ variables with respect to these measures, i.e., if $f\in L^1(\C^k,\mu^k_t)$ is a function of $k$ variables $x_1,\dots, x_k$ considered also as a function of $N$ variables, then 
		\begin{align*}
			\int_{\R^N} f(\mb y)\,d\mu^N_t(\mb y)=\int_{\R^k} f(\mb x)\,d\mu^k_t(\mb x).
		\end{align*}
		
	\end{enumerate}
\end{rem}	

In connection with Segal--Bargmann transforms, let us state the following well-known result (see \cite{Rob91}).
\begin{thm}\label{thm:heat-powerseries}
	For any $t>0$ and $q\in\cal P_k(\R)$ considered as a function of $N$ variables ($N\ge k$), we have
	\begin{align*}
		e^{\frac{t}2\Del_{\R^N}} q=\sum_{n=0}^\infty \frac{1}{n!}\left(\frac{t}{2}\Del_{\R^N}\right)^n q,
	\end{align*}
	at all points in $\R^N$, where the operator on the left-hand side is the heat operator defined in \eqref{eqn:heat-inhomogeneous}, and the power series of differential operators on the right-hand side applies term-wise to $q$.
\end{thm}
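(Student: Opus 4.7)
The plan is to exploit uniqueness of polynomial-growth solutions to the heat equation. The first observation is that the right-hand side is in fact a \emph{finite} sum: since $q\in\cal P_k(\R)$ has some finite total degree $d$ and each application of $\Del_{\R^N}$ lowers the degree by at least $2$, one has $(\Del_{\R^N})^n q=0$ for all $n>d/2$. Consequently
\begin{align*}
v(\mb x,t):=\sum_{n=0}^\infty \frac{1}{n!}\left(\frac{t}{2}\Del_{\R^N}\right)^n q(\mb x)
\end{align*}
is well-defined pointwise and is a polynomial in $(\mb x,t)$.

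Next I would verify that $v$ solves the initial value problem for the heat equation with data $q$. Because the sum is finite, term-by-term differentiation is legitimate, and a direct re-indexing gives
\begin{align*}
\pt_t v=\sum_{n\ge 1}\frac{1}{(n-1)!}\left(\frac{t}{2}\right)^{n-1}\frac{1}{2}(\Del_{\R^N})^n q=\frac{1}{2}\Del_{\R^N} v,
\end{align*}
while evaluating at $t=0$ gives $v(\mb x,0)=q(\mb x)$.

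Separately, I would show that the heat-operator side $u(\mb x,t):=\left(e^{\frac{t}{2}\Del_{\R^N}}q\right)(\mb x)$ from \eqref{eqn:heat-inhomogeneous} is also a classical solution with the same initial data. The convolution integral converges for any polynomial $q$ because the Gaussian decay of $\mu^N_t$ absorbs polynomial growth in $\mb y$. Writing $q(\mb x-\mb y)=\sum_\beta c_\beta(\mb x)\,\mb y^\beta$ by the multivariate binomial theorem and computing against $\mu^N_t$ via Gaussian moments gives an explicit polynomial expression in $\mb x$ of degree at most $d$, with coefficients depending polynomially on $t$. In particular $u$ is $C^\infty$, satisfies $\pt_t u=\frac{1}{2}\Del_{\R^N} u$ (this is the standard fact that heat-kernel convolution solves the heat equation, justified by differentiation under the integral sign using the Gaussian decay), has polynomial growth in $\mb x$ uniformly on compact $t$-intervals, and tends to $q$ as $t\to 0^+$ (by continuity of convolution with an approximate identity, or simply because the expression is a polynomial in $t$ equal to $q$ at $t=0$).

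Finally I would invoke the Tychonoff-type uniqueness theorem for the heat equation on $\R^N$: a $C^2$ solution of $\pt_t w=\frac{1}{2}\Del_{\R^N}w$ with $w(\cdot,0)=q$ and with growth bounded by $C\,e^{A|\mb x|^2}$ (hence certainly polynomial growth) uniformly on compact intervals in $t$ is unique. Both $u$ and $v$ satisfy these hypotheses, so $u=v$, which is the desired identity. The only real subtlety is justifying the growth estimate on $u$ and the interchange of differentiation with the convolution integral, but both are routine consequences of the rapid decay of $\mu^N_t$ against polynomial integrands.
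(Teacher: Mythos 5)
Your argument is correct and self-contained; note, however, that the paper does not actually prove this statement at all --- it is quoted as a well-known result with a citation to the literature --- so there is no internal proof to compare against. Your route (finite truncation of the series because $\Del_{\R^N}$ strictly lowers degree, verification that both sides solve $\pt_t w=\tfrac12\Del_{\R^N}w$ with initial datum $q$ and have at most polynomial growth, then Tychonoff-type uniqueness) is sound: the only points needing care, the differentiation under the integral sign and the growth bound on the convolution, are handled by the Gaussian decay exactly as you say, and the sub-Gaussian growth hypothesis of the uniqueness theorem is satisfied with arbitrarily small exponent, so uniqueness holds on all of $[0,\infty)$. One remark: your own computation of $u(\mb x,t)=\int_{\R^N}q(\mb x-\mb y)\,\mu^N_t(\mb y)\,d\mb y$ via the multivariate binomial expansion and Gaussian moments already makes the uniqueness theorem unnecessary. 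Since
\begin{align*}
\int_{\R}y^{2m}\,\mu^1_t(dy)=\frac{(2m)!}{2^m\,m!}\,t^m,
\end{align*}
expanding $q$ in a Taylor polynomial about $\mb x$ and summing the resulting moments reproduces term by term the coefficients $\frac{1}{n!}\bigl(\tfrac{t}{2}\bigr)^n\Del_{\R^N}^n q(\mb x)$, so the two polynomials in $(\mb x,t)$ can simply be matched directly. That variant is more elementary (pure computation, no PDE uniqueness input); your version generalizes more readily beyond polynomials. Either way the proof is complete.
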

Thm.~\ref{thm:heat-powerseries} justifies the use of the notation $e^{\frac{t}{2}\Del_{\R^N}}$ for the heat operator: the action of the heat operator on polynomials is nothing but the Taylor expansion of the exponential operator of the Laplacian. 

\section{Some Results on the Segal--Bargmann transforms on the Spheres}\label{chap:coherent-states-sphere}

In this section, we recall some important facts about the Segal--Bargmann transforms on the spheres following the construction of Stenzel \cite{Stenzel99} and Hall and Mitchell \cite{HallMit2002}, as well as the large-$N$ limit phenomena of the spherical measures and the spherical Laplacians by Petersen and Sengupta \cite{PeSen} and Umemura and Kono \cite{UmKo65}.

Since the results in \cite{HallMit2002} are established for the unit spheres rather than $S^{N-1}(\sqrt N)$, we will adjust these results to an arbitrary radius $b>0$.

\subsection{The spherical Laplacian and the spherical heat kernel}\label{sec:heat-sphere}
For any $b>0$, consider the $(N-1)$-sphere of radius $b$ centered at the origin of $\R^N$,
\begin{align*}
	S^{N-1}(b)=\left\{\mb x=(x_1,\dots,x_N)\in\R^N\colon x_1^2+\dots+x_N^2=b^2\right\}.
\end{align*}
Let $\ovl\sig^{N-1}_b$ be the rotation-invariant volume measure of $S^{N-1}(b)$. 

The spherical Laplacian is well-studied, but we only need two equivalent formulas, given in the following result.

\begin{prop}\label{prop:spherical-laplacian}
	The Laplacian on the sphere $S^{N-1}(b)$ is given by
	\begin{align}
		\Del_{S^{N-1}(b)}
		&= \frac{1}{b^2}\left(r^2\Del_{\R^N}-(r\pt_r)^2_N-(N-2)(r\pt_r)_N\right)\big|_{S^{N-1}(b)} \label{eqn:sphere-Laplacian-1}\\
		&=\frac{1}{b^2}\sum_{1\le k<l\le N} \left.\left(x_k\frac{\pt}{\pt x_l}-x_l\frac{\pt}{\pt x_k}\right)^2\right|_{S^{N-1}(b)} \label{eqn:sphere-Laplacian-2},
	\end{align}
	where $(r\pt_r)_N$ is as in Rem.~\ref{rem:polynomials}.
\end{prop}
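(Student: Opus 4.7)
My plan is to establish formula (4.1) via the standard polar decomposition of the Euclidean Laplacian, and then to verify formula (4.2) by a direct algebraic computation that reduces it to the right-hand side of (4.1).

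For (4.1), the starting point is the well-known identity on $\R^N\setminus\{0\}$: in polar coordinates $\mb x=r\theta$ with $r>0$ and $\theta\in S^{N-1}(1)$,
\begin{align*}
    \Del_{\R^N} = \pt_r^2 + \frac{N-1}{r}\pt_r + \frac{1}{r^2}\Del_{S^{N-1}(1)}.
\end{align*}
For a smooth $f\colon\R^N\to\C$ I would set $F(r,\theta)=f(r\theta)$ and use the identities $rF_r = \sum_j x_j\pt_j f=(r\pt_r)_N f$ and $r^2F_{rr} = (r\pt_r)^2_N f - (r\pt_r)_N f$ (both immediate from the chain rule and Rem.~\ref{rem:polynomials}) to solve the polar decomposition for $\Del_{S^{N-1}(1)}F$, which yields the bracketed expression in (4.1). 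The overall $b^{-2}$ and the restriction to $S^{N-1}(b)$ then come from the fact that the induced metric on $S^{N-1}(b)$ is $b^2$ times the pullback of the metric on $S^{N-1}(1)$ under the scaling $\theta\mapsto b\theta$, which forces $\Del_{S^{N-1}(b)} = b^{-2}\Del_{S^{N-1}(1)}$ via the scaling $\theta\mapsto b\theta$.

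To verify (4.2), I would expand $(x_k\pt_l-x_l\pt_k)^2$ using $[\pt_k,x_l]=\del_{kl}$; for each pair $k\ne l$ a short computation gives
\begin{align*}
    (x_k\pt_l-x_l\pt_k)^2 = x_k^2\pt_l^2 + x_l^2\pt_k^2 - 2x_kx_l\pt_k\pt_l - x_k\pt_k - x_l\pt_l.
\end{align*}
Summing over $k<l$: the pure square terms combine into $\sum_{k\ne l}x_k^2\pt_l^2 = r^2\Del_{\R^N} - \sum_k x_k^2\pt_k^2$; the cross terms combine into $-\sum_{k\ne l}x_kx_l\pt_k\pt_l$; and the first-order terms sum to $-(N-1)(r\pt_r)_N$, since each index $m$ appears in exactly $N-1$ of the pairs. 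The remaining second-order contributions then consolidate via the commutator identity $(r\pt_r)^2_N = \sum_{k,l}x_kx_l\pt_k\pt_l + (r\pt_r)_N$, producing exactly $r^2\Del_{\R^N} - (r\pt_r)^2_N - (N-2)(r\pt_r)_N$, which matches (4.1) after dividing by $b^2$ and restricting to the sphere.

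The only real obstacle is careful bookkeeping: ensuring that the sums over $k<l$, $k\ne l$, and $(k,l)$ unrestricted are correctly re-indexed, and that every commutator $[\pt_k,x_l]=\del_{kl}$ is tracked when moving $\pt$'s past $x$'s. Conceptually there is no surprise, because the right-hand side of (4.2) is (up to the $b^{-2}$ normalization) the Casimir of the rotation Lie algebra $\so(N)$ acting on $\R^N$, and the Laplacian on a Riemannian homogeneous space is well known to coincide with its Casimir operator.
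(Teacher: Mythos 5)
Your proposal is correct, and it follows essentially the same route the paper takes: the paper does not prove this proposition but cites the polar-coordinate decomposition of $\Del_{\R^N}$ for \eqref{eqn:sphere-Laplacian-1} and the angular-momentum (Casimir) computation of \cite[Sect.~3.6]{PeSen} for \eqref{eqn:sphere-Laplacian-2}, which are precisely the two derivations you carry out, and your algebra (the identity $r^2\pt_r^2=(r\pt_r)^2_N-(r\pt_r)_N$, the expansion of $(x_k\pt_l-x_l\pt_k)^2$, and the consolidation via $(r\pt_r)^2_N=\sum_{k,l}x_kx_l\pt_k\pt_l+(r\pt_r)_N$) checks out.
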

Formula \eqref{eqn:sphere-Laplacian-1} comes from the Euclidean Laplacian written in polar coordinates (see, e.g., \cite[Chap.~3 Exerc.~1]{Jost2017}), while formula \eqref{eqn:sphere-Laplacian-2} comes from the computation of Laplace--Beltrami operator on the sphere using angular momenta (see, e.g., \cite[Sect.~3.6]{PeSen}). The two formulas should be understood as follows: to compute the spherical Laplacian of a $C^2$-function $f$ on $S^{N-1}(b)$, we first extend $f$ to a $C^2$-function on a neighborhood of $S^{N-1}(b)\subset \R^N$, then apply the differential operator given in either  \eqref{eqn:sphere-Laplacian-1} or \eqref{eqn:sphere-Laplacian-2}, and finally restrict back to the sphere. The final result is independent of the choice of extension.

\smallskip

The \emph{heat kernel} $\rho^{N-1}_{b,t,\mb x}(\mb y)$ at a base point $\mb x$, time $t$, and a position $\mb y$ is the fundamental solution to the \emph{spherical heat equation}
\begin{align*}
	\frac{\pt}{\pt t}\, K(t,\mb x,\mb y)=\frac{1}{2}\Del_{S^{N-1}(b),\mb y}\, K(t,\mb x,\mb y),
\end{align*}
with the initial condition
\begin{align*}
	\lim_{t\downarrow 0} \int_{\mb y\in S^{N-1}(b)} \rho^{N-1}_{b,t,\mb x}(\mb y)f(\mb y)\,d\ovl\sig^{N-1}_b(\mb y)=f(\mb x),\qquad f\in C^\infty(S^{N-1}(b)).
\end{align*}
Here, the notation $\Del_{S^{N-1}(b),\mb y}$ is the spherical Laplacian of the variable $\mb y$. 
It is known that $\rho^{N-1}_{b,t,\mb x}(\mb y)$ is symmetric in $\mb x$ and $\mb y$ and only depends on the (spherical) distance between $\mb x$ and $\mb y$.

\subsection{The hyperbolic space and its heat kernel}
An important element in Stenzel's construction is the heat kernel on the noncompact dual symmetric space. It is known that the dual space of the sphere $S^{N-1}(b)$ is the hyperbolic space $H^{N-1}(b)$ of sectional curvature $-\frac{1}{b}$. 

By Cartan--Hadamard Theorem, $H^{N-1}(b)$ is diffeomorphic to $\R^{N-1}$ via the exponential map, so for each $\mb x\in S^{N-1}(b)$ we can identify each tangent space $T_{\mb x}S^{N-1}(b)$ with $H^{N-1}(b)$. Let $p$ be the hyperbolic distance from a point $\mb p\in H^{N-1}(b)$ to the base point $\mb x$. 
\begin{prop}\label{prop:hyperbol-inv-measure}
	The measure $\be^{N-1}_b(p)\,d\mb p$ given by
	\begin{align*}
		\be^{N-1}_b(p)=
			\displaystyle\left(\dfrac{b}{p}\sinh\left(\dfrac{p}{b}\right)\right)^{N-2}
	\end{align*}
	is a volume measure on the hyperbolic space $H^{N-1}(b)$.
\end{prop}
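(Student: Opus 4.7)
The plan is to compute the hyperbolic volume form on $H^{N-1}(b)$ in geodesic polar coordinates about the base point $\mb x$ and compare it to the Euclidean Lebesgue measure $d\mb p$ pulled back through the exponential map. By the Cartan--Hadamard theorem (already invoked right before the statement), $\exp_{\mb x}\colon T_{\mb x}S^{N-1}(b)\cong \R^{N-1}\to H^{N-1}(b)$ is a global diffeomorphism, so geodesic polar coordinates $(p,\theta)\in (0,\infty)\times S^{N-2}$ cover all of $H^{N-1}(b)$ minus the origin.

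First, I would recall the standard Jacobi field computation on a space of constant sectional curvature $-1/b^2$. Solving the Jacobi equation along a unit-speed radial geodesic with initial conditions that correspond to spreading in a direction orthogonal to the geodesic, one obtains transverse Jacobi fields of length $b\sinh(p/b)$ at radial distance $p$. Consequently, the hyperbolic metric in geodesic polar coordinates takes the warped-product form
\begin{equation*}
g_{H^{N-1}(b)} \;=\; dp^2 + b^2\sinh^2(p/b)\, g_{S^{N-2}},
\end{equation*}
where $g_{S^{N-2}}$ is the round metric on the unit $(N-2)$-sphere. Taking the square root of the determinant of the metric in an orthonormal coframe then yields the Riemannian volume form
\begin{equation*}
d\mathrm{Vol}_{H^{N-1}(b)} \;=\; b^{N-2}\sinh^{N-2}\!\left(\tfrac{p}{b}\right) dp\wedge d\Omega_{S^{N-2}}.
\end{equation*}

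Next, I would express the Euclidean Lebesgue measure on $\R^{N-1}\cong T_{\mb x}S^{N-1}(b)$ in the same polar coordinates: with $p=|\mb p|$ and $\theta=\mb p/|\mb p|$ we have $d\mb p = p^{N-2} dp\wedge d\Omega_{S^{N-2}}$. Dividing the two expressions gives
\begin{equation*}
\frac{d\mathrm{Vol}_{H^{N-1}(b)}}{d\mb p} \;=\; \frac{b^{N-2}\sinh^{N-2}(p/b)}{p^{N-2}} \;=\; \left(\frac{b}{p}\sinh\!\left(\frac{p}{b}\right)\right)^{N-2} \;=\; \beta^{N-1}_b(p),
\end{equation*}
which is precisely the claimed density; the identity extends across $p=0$ by continuity since $\tfrac{b}{p}\sinh(p/b)\to 1$.

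The only real step is justifying the warped-product form of the hyperbolic metric; everything else is bookkeeping in polar coordinates. In practice I would either cite this as a standard fact from Riemannian geometry (e.g., the Rauch/Jacobi-field analysis of space forms) or, if a self-contained derivation is desired, verify it in a concrete model such as the upper half-space or the hyperboloid model, where the metric computation in geodesic polar coordinates reduces to an elementary one-variable calculation.
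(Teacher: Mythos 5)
Your argument is correct and is exactly the standard derivation that the paper implicitly relies on: the paper states this proposition without proof (it is imported from the Hall--Mitchell/Stenzel framework), and the warped-product form $dp^2+b^2\sinh^2(p/b)\,g_{S^{N-2}}$ of the constant-curvature metric, compared against $d\mb p=p^{N-2}\,dp\wedge d\Omega_{S^{N-2}}$, is the intended justification. One small remark: the paper writes that $H^{N-1}(b)$ has sectional curvature $-\tfrac1b$, but your computation (correctly) uses curvature $-\tfrac1{b^2}$, which is what the formula $\be^{N-1}_b(p)=\bigl(\tfrac{b}{p}\sinh(\tfrac{p}{b})\bigr)^{N-2}$ actually requires; the paper's exponent of $b$ there appears to be a typo.
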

The heat kernel of $H^{N-1}(b)$ is denoted by $\theta^{N-1}_{b}(s,p)$. The heat kernel satisfies the \emph{hyperbolic heat equation}
\begin{align*}
	\frac{\pt}{\pt s}\theta^{N-1}_{b}(s,p)=\frac12 \left[\frac{\pt^2}{\pt p^2}+\dfrac{N-2}{b}\coth \left(\dfrac {p}{b}\right)\frac{\pt}{\pt p}\right]\theta^{N-1}_{b}(s,p).
\end{align*}
	
The right-hand side of the heat equation (ignoring the constant $\frac {1}{2}$) is the \emph{radial part} of the Laplacian $\Del_{H^{N-1}(b)}$. We can check that $\theta^{N-1}_{b}(s,p)=\theta^{N-1}_{1}(\frac{s}{b^2},\frac {p}{b})$. 

The following estimate in Davies {\cite[Thm.~5.7.2]{Davies}} on the growth of the heat kernel will be useful later. Note that here our time parameter is different from Davies' by a factor of 2.
\begin{prop}\label{prop:estimate-heat-hyperbolic}
	For any $d\in\N$ and $d>1$, there exists a constant $c_d>0$ such that the hyperbolic heat kernel satisfies
	\begin{align*}
		c_{d}^{-1} K_{d}(s,p)\le \theta^{d}_{1}(s,p) \le c_{d} K_{d}(s,p)
	\end{align*}
	\nopagebreak[4] where 
	\begin{align*}
		K_{d}(s,p)= (2\pi s)^{-\frac{d}{2}} (1+r)\left(1+p+\frac s2\right)^{\frac {d-1}{2}} e^{-\frac{s(d-1)^2}{8}-\frac{p(d-1)}{2}}e^{-\frac{p^2}{2s}}. 
	\end{align*}
	In particular, there exists a positive function $k\colon \N\times \R^+\to\R^+$ such that 
	\begin{align*}
		K_d(s,p)\le k(d,s)\, e^{-\frac{p^2}{2s}}.
	\end{align*}
\end{prop}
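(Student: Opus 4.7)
The first two-sided inequality is essentially a direct quotation of Davies' Theorem 5.7.2 on the heat kernel of the real hyperbolic space $H^{d}$ of constant curvature $-1$, so the plan is to reduce to that statement rather than re-derive it. The one subtlety is the factor of $2$ in the time convention: the heat equation used here is $\partial_s \theta = \tfrac12 \Delta \theta$, whereas Davies normalizes the generator to $\Delta$ itself, so his time $t$ corresponds to our $s/2$. My plan is to substitute $t \mapsto s/2$ into Davies' expression for $K_d$, collect the resulting powers of $s$ and $p$, and check that they match the formula stated here. (In doing so, I would also flag that the factor written as $(1+r)$ in the definition of $K_d$ should be read as $(1+p)$, since $r$ has not been introduced in this setting.)

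For the ``in particular'' clause, the plan is elementary: I would isolate the $p$-dependence in $K_d(s,p)/e^{-p^2/(2s)}$, which after pulling out the $s$-only prefactors reduces to the function
\[
\varphi_{d,s}(p) \;=\; (1+p)\left(1+p+\tfrac{s}{2}\right)^{(d-1)/2} e^{-p(d-1)/2}.
\]
Since $d>1$, this is a polynomial in $p$ of degree $(d+1)/2$ times an exponentially decaying factor, so $\varphi_{d,s}$ attains a finite maximum $M(d,s)<\infty$ on $[0,\infty)$. Setting
\[
k(d,s) \;=\; (2\pi s)^{-d/2}\, e^{-s(d-1)^2/8}\, M(d,s)
\]
yields $K_d(s,p)\le k(d,s)\,e^{-p^2/(2s)}$, and one checks that $k$ is positive and jointly measurable in $(d,s)$.

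The only potentially delicate point is the bookkeeping of the scaling convention between Davies and the normalization used in this paper; I expect the bound itself to be a one-line verification once the substitution $t=s/2$ is carried out. I would not expect any analytic obstacle beyond double-checking constants against the source.
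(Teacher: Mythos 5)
Your proposal matches the paper's treatment: the two-sided bound is simply quoted from Davies' Theorem 5.7.2 with the time rescaling $t=s/2$ noted (the paper gives no further proof), and your elementary argument for the ``in particular'' clause --- bounding the polynomially growing, exponentially decaying factor in $p$ by its supremum $M(d,s)$ --- is exactly what the statement implicitly relies on. Your reading of the stray $(1+r)$ as $(1+p)$ is also the correct interpretation of that typographical slip.
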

\subsection{The Segal--Bargmann transforms}\label{sec:SB-sphere}
We cite some notable results established by Hall and Mitchell \cite{HallMit2002}. Recall that $Q^{N-1}(b)$ denotes the \emph{complexification of the sphere} $S^{N-1}(b)$ or the \emph{quadric} of parameter $b>0$, i.e.,
\begin{align}
	Q^{N-1}(b) = \left\{(a_1,\dots,a_n)\in\C^N\colon \mb a^2=a_1^2+\dots+a_n^2=b^2\right\}.\label{eqn:quadric-def}
\end{align}
One should distinguish the quadric, which is a (non-compact) complex submanifold of $\C^N$, from the complex sphere $S^{2N-1}(b)=\{|a_1|^2+\dots+|a_n|^2=b^2\}$.

\begin{prop}[{\cite[Sect.~III]{HallMit2002}}]\label{prop:quadric-diffeomorphic-cotang}
	Let $b=|\mb x|$ be the parameter of the quadric and $p=|\mb p|$.  The map
	\begin{align*}
		\mb a(\mb x,\mb p)=\mb u(\mb x,\mb p)+i\mb v(\mb x,\mb p):=
		\begin{cases}
			\displaystyle\cosh\left(\dfrac{p}{b}\right)\,\mb x+\frac{b}{p}\sinh\left(\dfrac{p}{b}\right)\,i\mb p\quad&\tnm{if $p>0$},\\
			\mb x+i\mb 0\quad&\tnm{if $p=0$}			
		\end{cases}
	\end{align*}
	defines a diffeomorphism from $TS^{N-1}(b)=\{(\mb x,\mb p)\in S^{N-1}(b)\times \R^N\colon \mb x \cdot \mb p=0\}$ to $Q^{N-1}(b)$. 
\end{prop}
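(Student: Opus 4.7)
The plan is to verify the claim by four explicit steps: (a) show the image lies in the quadric, (b) check smoothness of the forward map everywhere (including the zero section $p = 0$), (c) write down an explicit inverse, and (d) verify that the inverse is smooth.

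For (a) I would compute directly. Writing $\mb u = \cosh(p/b)\,\mb x$ and $\mb v = \tfrac{b}{p}\sinh(p/b)\,\mb p$, and using $|\mb x| = b$, $|\mb p| = p$, and the orthogonality $\mb x\cdot\mb p = 0$, one finds
\begin{align*}
\mb u^2 - \mb v^2 &= b^2\cosh^2(p/b) - b^2\sinh^2(p/b) = b^2,\\
\mb u\cdot\mb v &= \tfrac{b}{p}\cosh(p/b)\sinh(p/b)\,(\mb x\cdot \mb p) = 0,
\end{align*}
so $\mb a^2 = \mb u^2 - \mb v^2 + 2i\,\mb u\cdot \mb v = b^2$, confirming $\mb a(\mb x,\mb p) \in Q^{N-1}(b)$. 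For (b), the only issue is the factor $\tfrac{b}{p}\sinh(p/b)$ at $p = 0$; but this is an even analytic function of $p$, so it extends smoothly as a function of $\mb p$ near the zero section (its Taylor series $1 + \tfrac{p^2}{6b^2} + \cdots$ involves only $|\mb p|^2 = \mb p\cdot\mb p$, which is smooth in $\mb p$). Hence the map is $C^\infty$ on all of $TS^{N-1}(b)$.

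For (c) I would construct the inverse as follows. Given $\mb a = \mb u + i\mb v \in Q^{N-1}(b)$, the quadric equation forces $\mb u^2 - \mb v^2 = b^2$ and $\mb u\cdot\mb v = 0$, so $|\mb u|^2 \ge b^2$ and $\mb u \ne 0$. Define $p \ge 0$ by $|\mb u| = b\cosh(p/b)$, i.e., $p = b\cosh^{-1}(|\mb u|/b)$, and then
\begin{align*}
\mb x = \frac{\mb u}{\cosh(p/b)}, \qquad \mb p = \begin{cases} \dfrac{p}{b\sinh(p/b)}\,\mb v & p > 0,\\ \mb 0 & p = 0.\end{cases}
\end{align*}
One checks $|\mb x| = b$ and $\mb x\cdot \mb p = 0$ (from $\mb u\cdot\mb v = 0$), and that $|\mb p| = p$ follows from $|\mb v|^2 = b^2\sinh^2(p/b)$. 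This is a two-sided inverse of $\mb a(\mb x,\mb p)$ by construction.

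The main obstacle is (d), smoothness of the inverse across the locus $\{\mb v = 0,\ |\mb u| = b\}$ where $p = 0$: here the formula $\cosh^{-1}(|\mb u|/b)$ has a square-root singularity at $|\mb u| = b$, and $\tfrac{p}{\sinh(p/b)}$ is singular at $p = 0$ in the wrong variable. To handle this I would rewrite everything in terms of the smooth quantities $|\mb v|^2$ and $\mb u$. The identity $b^2\sinh^2(p/b) = |\mb v|^2$ together with the even analytic function $h(\tau) := \tau/\sinh(\tau)$ (smooth and nonvanishing at $0$) allows one to write $\mb p = h(p/b)\cdot \mb v/b$, where $p/b = \sinh^{-1}(|\mb v|/b)$; composing with $\mb v \mapsto |\mb v|^2$, the chain $|\mb v|^2 \mapsto (p/b)^2 \mapsto h(p/b)^2$ is analytic at $0$ (even analytic functions of $p/b$ become analytic functions of $|\mb v|^2$). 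The same trick gives smoothness of $\mb x = \mb u/\cosh(p/b)$ via $\cosh^2(p/b) = 1 + |\mb v|^2/b^2$. Thus both $\mb x$ and $\mb p$ depend smoothly on $\mb a$, completing the verification that $\mb a(\mb x,\mb p)$ is a diffeomorphism.
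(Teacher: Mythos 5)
Your verification is correct and complete. Note that the paper itself offers no proof of this proposition --- it is quoted directly from Hall and Mitchell (Sect.~III of that reference), where the diffeomorphism is obtained from the general theory of adapted complex structures on tangent bundles of compact symmetric spaces; so your elementary, purely computational argument is a genuinely different (and more self-contained) route. All four steps check out: the algebra in (a) is right, since $\mb u^2-\mb v^2=b^2\cosh^2(p/b)-b^2\sinh^2(p/b)=b^2$ and $\mb u\cdot\mb v\propto\mb x\cdot\mb p=0$; the inverse in (c) is forced by $|\mb u|^2=b^2+|\mb v|^2$ and $|\mb v|=b\sinh(p/b)$; and the key issue, smoothness across the zero section in both directions, is correctly resolved by the observation that every potentially singular coefficient is an even analytic function of $p/b$ and hence an analytic function of the smooth quantities $|\mb p|^2$ (forward direction) or $|\mb v|^2$ (inverse direction, via $(\sinh^{-1}\tau)^2$ being even analytic in $\tau$). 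One trivial slip: in step (d) you write $\mb p=h(p/b)\,\mb v/b$, whereas your own formula in (c) gives $\mb p=\frac{p/b}{\sinh(p/b)}\,\mb v=h(p/b)\,\mb v$ with no extra factor of $b^{-1}$; this does not affect the smoothness argument.
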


Note that $\mb a^2=\mb a\cdot \mb a=b^2$ is constant but $|\mb a|^2=\mb a\cdot \ovl {\mb a}$ is unbounded. Similar to the Euclidean Segal--Bargmann transforms, we have the following results.
\begin{prop}
	For any $b,t>0$ and $N>1$, to each function $f$ that belongs to the Hilbert space $L^2\big(S^{N-1}(b),\ovl\sig^{N-1}_b\big)$, we associate a function $e^{\frac t2\Del_{S^{N-1}(b)}}f$ given by
	\begin{align}
		\left(e^{\frac t2\Del_{S^{N-1}(b)}}f\right)(\mb x):=\int_{\mb y \in S^{N-1}(b)}\rho^{N-1}_{b,t,\mb x}(\mb y)f(\mb y)\,d\ovl\sig^{N-1}_b(\mb y),\quad \mb x\in S^{N-1}(b).\label{def:Segal-Bargmann sphere}
	\end{align}
	The map $e^{\frac t2\Del_{S^{N-1}(b)}}$ is well defined for all $f\in L^2\big(S^{N-1}(b),\ovl\sig^{N-1}_b\big)$, $\mb x\in S^{N-1}(b)$, and is called the \emph{heat operator} at time $t$.
\end{prop}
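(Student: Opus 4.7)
The plan is to verify two things: (i) that the integral formula \eqref{def:Segal-Bargmann sphere} converges for every $\mb x\in S^{N-1}(b)$ and every $f\in L^2\big(S^{N-1}(b),\ovl\sig^{N-1}_b\big)$, and (ii) that the resulting assignment coincides with the $L^2$-bounded heat semigroup. Both facts are standard consequences of the classical theory of the heat equation on a compact Riemannian manifold, so I do not expect any real obstacle; the work is mostly one of citation and bookkeeping.

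For (i), I would invoke the classical fact that on a compact Riemannian manifold the heat kernel $\rho^{N-1}_{b,t,\mb x}(\mb y)$ is, for each fixed $t>0$, strictly positive and jointly smooth in $(\mb x,\mb y)$, and in particular uniformly bounded on $S^{N-1}(b)\times S^{N-1}(b)$. For fixed $\mb x$, the function $\mb y\mapsto \rho^{N-1}_{b,t,\mb x}(\mb y)$ therefore lies in $L^2\big(S^{N-1}(b),\ovl\sig^{N-1}_b\big)$, and Cauchy--Schwarz yields
\[
\left|\int_{S^{N-1}(b)}\rho^{N-1}_{b,t,\mb x}(\mb y)\,f(\mb y)\,d\ovl\sig^{N-1}_b(\mb y)\right|\le \big\|\rho^{N-1}_{b,t,\mb x}\big\|_{L^2}\cdot \|f\|_{L^2}<\infty.
\]
This confirms that the integral converges absolutely at every $\mb x\in S^{N-1}(b)$ and unambiguously assigns a pointwise value to $\big(e^{\frac t2\Del_{S^{N-1}(b)}}f\big)(\mb x)$.

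For (ii), I would use the spectral decomposition of $\Del_{S^{N-1}(b)}$: the negative Laplacian is a nonnegative self-adjoint operator with discrete spectrum $\{\lam_\ell=\ell(\ell+N-2)/b^2\}_{\ell\in\N}$ whose eigenspaces are spanned by spherical harmonics $\{\phi_{\ell,k}\}$ forming an orthonormal basis of $L^2\big(S^{N-1}(b),\ovl\sig^{N-1}_b\big)$. The heat kernel then admits the uniformly convergent Mercer-type expansion
\[
\rho^{N-1}_{b,t,\mb x}(\mb y)=\sum_{\ell,k} e^{-\frac t2\lam_\ell}\,\phi_{\ell,k}(\mb x)\,\ovl{\phi_{\ell,k}(\mb y)},
\]
so substituting $f=\sum_{\ell,k} c_{\ell,k}\phi_{\ell,k}$ into \eqref{def:Segal-Bargmann sphere} and applying Fubini gives $e^{\frac t2\Del_{S^{N-1}(b)}}f=\sum_{\ell,k} e^{-\frac t2\lam_\ell}c_{\ell,k}\phi_{\ell,k}$. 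Since each factor $e^{-\frac t2\lam_\ell}\le 1$, this series is square-summable with $L^2$-norm at most $\|f\|_{L^2}$, showing that $e^{\frac t2\Del_{S^{N-1}(b)}}$ is a well-defined contraction on $L^2\big(S^{N-1}(b),\ovl\sig^{N-1}_b\big)$. The super-exponential decay of $e^{-\frac t2\lam_\ell}$ against the polynomial growth of the spherical-harmonic multiplicities further upgrades the image to $C^\infty\big(S^{N-1}(b)\big)$, reconciling the pointwise definition in \eqref{def:Segal-Bargmann sphere} with the spectral one.
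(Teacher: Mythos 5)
Your argument is correct: the paper states this proposition without proof, treating it as a standard consequence of heat-kernel theory on compact Riemannian manifolds (deferring to the constructions in Stenzel and Hall--Mitchell), and your two steps --- Cauchy--Schwarz against the bounded, jointly smooth kernel for pointwise convergence, and the Mercer/spectral expansion in spherical harmonics with eigenvalues $\ell(\ell+N-2)/b^2$ for the $L^2$-contraction property --- are exactly the standard facts being invoked. Nothing is missing; you have simply made explicit what the paper leaves as a citation.
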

\begin{defi}\label{def:SB-sphere}
	For any $b,t>0$ and $N>1$, we define the \emph{Segal--Bargman transform} $\cal C^{N-1}_{b,t}$ on $L^2\big(S^{N-1}(b),\ovl\sig^{N-1}_b\big)$ as follows: for any $f\in L^2\big(S^{N-1}(b),\ovl\sig^{N-1}_b\big)$, $\cal C^{N-1}_{b,t}f$ is a function on the quadric $Q^{N-1}(b)$ given by
	\begin{align*}
		\left(\cal C^{N-1}_{b,t}\,f\right)(\mb a)&=\left(e^{\frac t2\Del_{S^{N-1}(b)}}f\right)_\C(\mb a), \qquad \mb a\in Q^{N-1}(b),
	\end{align*}
	whenever the integral operator is defined. Here, $F_\C$ denotes the holomorphic extension of a sufficiently nice function $F\colon S^{N-1}(b)\to \C$ to the whole $Q^{N-1}(b)$.
\end{defi}

The following derivation will be helpful in the proof of the Main Theorem in the next section. Let $R_{kl}(\zeta)$ be the (complex) rotation by $\zeta\in\C$ in the complex $kl$-plane. In particular, in the $a_1a_2$-plane, we have 
\begin{align*}
	R_{12}(\zeta)=\begin{bmatrix}
		\cos(\zeta)&-\sin(\zeta)&\cdots\\
		\sin(\zeta)&\cos(\zeta)&\cdots\\
		\vdots&\vdots&\ddots
	\end{bmatrix}
\end{align*}

We can consider $R_{kl}(\zeta)$ as an element of $\SO(N,\C)$ acting on $C^1(Q^{N-1}(b))$ by the rule
\begin{align*}
	R_{kl}(\zeta)\, f(\mb a,\ovl{\mb a})= f\left(R^{-1}_{kl}(\zeta)\mb a,\ovl{R^{-1}_{kl}(\zeta)\mb a}\right).
\end{align*}
Let $J_{kl}$ be the infinitesimal generator of $R_{kl}(\zeta)$. We have
\begin{align}
	(J_{kl}f)(\mb a,\ovl{\mb a})
	&=\left.\frac{\pt}{\pt \zeta}\right|_{\zeta=0}(R_{kl}(\zeta) f)(\mb a,\ovl{\mb a})
	=\left(a_l\frac{\pt}{\pt a_k}-a_k\frac{\pt}{\pt a_l}\right)f(\mb a,\ovl{\mb a}).\label{eqn:rot_generator}
\end{align}
Now, we define the following operators on $C^2 (Q^{N-1}(b))$
\begin{align*}
	&J_{\mb a,N}^2=-\sum_{1\le k<l\le N} J_{kl}^2=-\sum_{1\le k<l\le N} \left(a_k\frac{\pt}{\pt a_l}-a_l\frac{\pt}{\pt a_k}\right)^2,\\
	&J_{\ovl{\mb a},N}^2=-\sum_{1\le k<l\le N} \ovl{J_{kl}}^2=-\sum_{1\le k<l\le N} \left(\bar a_k\frac{\pt}{\pt \bar a_l}-\bar a_l\frac{\pt}{\pt \bar a_k}\right)^2.
\end{align*}
The negative signs here are to be consistent with Hall and Mitchell \cite{HallMit2002}. 

The following result shows the relation between $J_{\mb a,N}^2$ and the hyperbolic heat equation from \cite[Lem.~4]{HallMit2002} (after rescaling by a factor of $b$).
\begin{prop}\label{prop:Gamma-p-function}
	Let $f(\mb p)$ be a smooth, radial, real-valued function on $\R^N$, independent of $\mb x$, such that the function $\tilde f(p=|\mb p|):=f(\mb p)$ is a smooth even function in $p$. Then 
	\begin{align}\label{eqn:hyp-heat-eqn}
		\left(J_{\mb a,N}^2f\right)(2\mb p)=\left(J_{\ovl{\mb a},N}^2f\right)(2\mb p)=\left[b^2\frac{\pt^2}{\pt r^2} + b(N-2)\coth \left( \frac {r}{b}\right)\frac{\pt}{\pt r}\right]_{r=2p}\tilde f(r).
	\end{align}
	In particular, this is true for $f(\mb p)=\theta^{N-1}_b(s,p)$ for any $s>0$.
\end{prop}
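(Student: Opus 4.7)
The plan is a direct computation: express the quadric extension of $f$ as a function of the $\SO(N)$-invariant coordinate $\mb a\cdot\ovl{\mb a}$, apply the generators $J_{kl}=a_l\pt_{a_k}-a_k\pt_{a_l}$ via the chain rule, and then use the quadric constraint $\mb a^2=b^2$ together with another chain rule to rewrite the answer as the radial part of the hyperbolic Laplacian in the variable $r=2p$.

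For the setup, since $\tilde f$ is even and smooth, I would let $F(\mb a,\ovl{\mb a})=\tilde H(\mb a\cdot\ovl{\mb a})$, where $\tilde H$ is the unique smooth function satisfying $\tilde H(b^2\cosh(r/b))=\tilde f(r)$. By Prop.~\ref{prop:quadric-diffeomorphic-cotang}, $|\mb a|^2=b^2\cosh(2p/b)$ in the quadric parametrization, so $F(\mb a(\mb x,\mb p))=\tilde f(2p)$, matching the evaluation ``at $2\mb p$'' in the statement. Using $\pt_{a_k}(\mb a\cdot\ovl{\mb a})=\ovl a_k$, one gets $J_{kl}F=(a_l\ovl a_k-a_k\ovl a_l)\tilde H'$ and
\begin{align*}
	J_{kl}^2F=-(|a_k|^2+|a_l|^2)\,\tilde H'+(a_l\ovl a_k-a_k\ovl a_l)^2\,\tilde H''.
\end{align*}

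Next, two elementary polynomial identities, $\sum_{k<l}(|a_k|^2+|a_l|^2)=(N-1)|\mb a|^2$ and $\sum_{k<l}(a_l\ovl a_k-a_k\ovl a_l)^2=|\mb a^2|^2-|\mb a|^4$ (both verified by expansion and comparison with $\sum_k|a_k|^4$), reduce the sum to
\begin{align*}
	J^2_{\mb a,N}F=(N-1)|\mb a|^2\,\tilde H'+(|\mb a|^4-|\mb a^2|^2)\,\tilde H''.
\end{align*}
On $Q^{N-1}(b)$ one has $|\mb a^2|^2=b^4$ and hence $|\mb a|^4-|\mb a^2|^2=b^4\sinh^2(r/b)$. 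Differentiating $\tilde H(b^2\cosh(r/b))=\tilde f(r)$ twice and solving for $\tilde H'$ and $\tilde H''$, then substituting back, the first-order piece contributes $(N-1)b\coth(r/b)\tilde f'(r)$ and the second-order piece contributes $b^2\tilde f''(r)-b\coth(r/b)\tilde f'(r)$; the sum is $b^2\tilde f''(r)+(N-2)b\coth(r/b)\tilde f'(r)$ at $r=2p$, which is the claim. The identity for $J^2_{\ovl{\mb a},N}$ is then automatic, since $F$ depends symmetrically on $\mb a$ and $\ovl{\mb a}$ through their product, and the same computation goes through verbatim with $\ovl a_k$ in place of $a_k$.

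The main obstacle I anticipate is precisely the chain-rule bookkeeping in the last step: ensuring that the $(N-1)b\coth(r/b)\tilde f'(r)$ from the $\tilde H'$-term combines with the $-b\coth(r/b)\tilde f'(r)$ that falls out of the $\tilde H''$-term to produce exactly the $(N-2)$ coefficient of the radial hyperbolic Laplacian, rather than shifting by one in the wrong direction. Once this is in hand, the equality of the two sides of \eqref{eqn:hyp-heat-eqn} for $f=\theta^{N-1}_b(s,p)$ is an immediate consequence of the hyperbolic heat equation for $\theta^{N-1}_b$.
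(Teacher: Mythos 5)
Your computation is correct. Note first that the paper does not actually prove this proposition: it is imported from Hall--Mitchell \cite{HallMit2002}*{Lem.~4} (rescaled by $b$), with only a remark about a sign typo there; so your argument is a self-contained verification rather than a reproduction of anything in this paper. Checking the details: with $F=\tilde H(\mb a\cdot\ovl{\mb a})$ one indeed gets $J_{kl}(a_l\ovl a_k-a_k\ovl a_l)=-(|a_k|^2+|a_l|^2)$, the two polynomial identities $\sum_{k<l}(|a_k|^2+|a_l|^2)=(N-1)|\mb a|^2$ and $\sum_{k<l}(a_l\ovl a_k-a_k\ovl a_l)^2=|\mb a^2|^2-|\mb a|^4$ are right, and on the quadric $|\mb a|^2=b^2\cosh(2p/b)$, so the chain rule gives $\tilde H'=\tilde f'(r)/(b\sinh(r/b))$ and $b^2\sinh^2(r/b)\,\tilde H''=\tilde f''(r)-\tfrac1b\coth(r/b)\tilde f'(r)$; the $(N-1)-1=N-2$ cancellation you were worried about comes out exactly as you predicted, and the overall sign is consistent with $J^2_{\mb a,N}=-\sum_{k<l}J_{kl}^2$. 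Two small points you should tighten. First, the existence and smoothness of $\tilde H$ is precisely where the evenness hypothesis enters: a smooth even $\tilde f$ is a smooth function of $r^2$ (Whitney), and $w=b^2\cosh(r/b)$ is a smooth function of $r^2$ with nonvanishing derivative at $r=0$, so $\tilde H$ is smooth at the vertex $w=b^2$; this deserves a sentence, since away from $r=0$ the inversion is trivial but at $r=0$ it is not. Second, your closing sentence is slightly off: the ``in particular'' clause of the proposition only asserts that \eqref{eqn:hyp-heat-eqn} applies to $f=\theta^{N-1}_b(s,\cdot)$ (which follows because the heat kernel is a smooth even radial function of $p$ for $s>0$); the hyperbolic heat equation is not needed to establish \eqref{eqn:hyp-heat-eqn} itself, it is only used later when this identity is combined with $\pt_s\theta=\tfrac12(\pt_p^2+\tfrac{N-2}{b}\coth(p/b)\pt_p)\theta$.
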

(There is a typographical error of the sign in \cite[Lem.~4]{HallMit2002}, but the proof of the result has the correct sign as stated in Prop.~\ref{prop:Gamma-p-function}) 

Recall the hyperbolic volume measure $\be^{N-1}_b$ in Prop.~\ref{prop:hyperbol-inv-measure}. We have the following.
\begin{prop}[{\cite[Lem.~3]{HallMit2002}}]\label{prop:invariant-action-quadric}
	For any $b>0$, the measure 
	\begin{align*}
		dh^{N-1}_b(\mb x,\mb p):=2^{N-1}\be^{N-1}_b(2p)\,d\mb p \,d\ovl\sig^{N-1}_b(\mb x)
	\end{align*} 
	defines a normalized volume measure on the quadric $Q^{N-1}(b)$ that is invariant under the action of $\SO(N,\C)$.
\end{prop}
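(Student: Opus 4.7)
The plan is to exploit the homogeneous-space structure $Q^{N-1}(b)\cong \SO(N,\C)/\SO(N-1,\C)$. First I would verify that $\SO(N,\C)$ acts transitively on the quadric --- any $\mb a\in\C^N$ with $\mb a\cdot\mb a=b^2$ can be brought to $(b,0,\dots,0)$ by some $g\in\SO(N,\C)$ --- and identify the stabilizer of $(b,0,\dots,0)$ as $\SO(N-1,\C)$. Since both groups are unimodular (every complex reductive Lie group is), there is a unique $\SO(N,\C)$-invariant measure on $Q^{N-1}(b)$ up to positive scaling. It therefore suffices to show that $dh^{N-1}_b$ is $\SO(N,\C)$-invariant; the factor $2^{N-1}$ in the definition merely fixes the normalization constant.

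Because $\so(N,\C)=\so(N)\oplus i\,\so(N)$, the invariance splits into two checks. Under $R\in\SO(N)$, the action on the quadric pulls back via Proposition~\ref{prop:quadric-diffeomorphic-cotang} to $(\mb x,\mb p)\mapsto(R\mb x,R\mb p)$: rotation-invariance of $\ovl\sig^{N-1}_b$ handles the base, the induced orthogonal action on the fiber preserves $d\mb p$, and $\be^{N-1}_b(2p)$ depends only on $p=|\mb p|$, which is unchanged. Hence $dh^{N-1}_b$ is $\SO(N)$-invariant. All imaginary generators $iJ_{k\ell}$ being $\SO(N)$-conjugate, it remains only to treat one of them, say $R_{1N}(i\tau)$, which acts on the $a_1a_N$-plane by a hyperbolic rotation (using $\cos(i\tau)=\cosh\tau$ and $\sin(i\tau)=i\sinh\tau$).

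For this step, I would apply $R_{1N}(i\tau)$ to $\mb a=\cosh(p/b)\,\mb x+(b/p)\sinh(p/b)\,i\mb p$ and then invert the parametrization of Proposition~\ref{prop:quadric-diffeomorphic-cotang} to extract the new base point $\mb x'\in S^{N-1}(b)$ and new tangent vector $\mb p'\in T_{\mb x'}S^{N-1}(b)$. The task is then to verify that the Jacobian of the map $(\mb x,\mb p)\mapsto(\mb x',\mb p')$ in these coordinates exactly compensates the change in the density $\be^{N-1}_b(2p)$, so that $dh^{N-1}_b$ is preserved. The conceptual reason this holds is that the fibers of $TS^{N-1}(b)\to S^{N-1}(b)$ embed into $Q^{N-1}(b)$ as totally real copies of $H^{N-1}(b)$ with the fiber coordinate rescaled by $2$: the $H^{N-1}(b)$-volume element $\be^{N-1}_b(q)\,d\mb q$ pulls back under $\mb q=2\mb p$ to $2^{N-1}\be^{N-1}_b(2p)\,d\mb p$, and the imaginary rotation mixes one spherical direction with its corresponding hyperbolic one precisely in the way that preserves the product measure.

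The main obstacle is this Jacobian computation: the explicit change of coordinates tangles circular and hyperbolic trigonometric quantities, and one must carefully decouple the motion of the base point $\mb x$ from the motion of the fiber vector $\mb p$. A cleaner alternative would be to identify $dh^{N-1}_b$ with (a constant multiple of) the K\"ahler volume form on $TS^{N-1}(b)$ arising from the Stenzel/Mok--Lempert--Sz\H{o}ke adapted complex structure, which is $\SO(N,\C)$-invariant by its general construction; the explicit density would then follow from matching K\"ahler potentials, bypassing the direct Jacobian calculation entirely.
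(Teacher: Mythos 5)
First, note that the paper does not prove this proposition at all: it is quoted verbatim from Hall and Mitchell \cite{HallMit2002}*{Lem.~3}, so the ``paper's proof'' is a citation, and your attempt has to stand on its own as a proof of that lemma. Your reduction is sound as far as it goes: transitivity of $\SO(N,\C)$ on $\{\mb a\cdot\mb a=b^2\}$ with stabilizer $\SO(N-1,\C)$, unimodularity of both groups giving uniqueness of the invariant measure up to scale, the easy verification of $\SO(N)$-invariance from rotation-invariance of $\ovl\sig^{N-1}_b$ and radiality of $\be^{N-1}_b$, and the reduction by $\SO(N)$-conjugacy to a single one-parameter hyperbolic subgroup $R_{1N}(i\tau)$. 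All of that is correct and standard.

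The genuine gap is that you stop exactly where the content of the lemma begins. The invariance under $R_{1N}(i\tau)$ --- equivalently, the statement that the Jacobian of $(\mb x,\mb p)\mapsto(\mb x',\mb p')$ in the parametrization of Prop.~\ref{prop:quadric-diffeomorphic-cotang} is compensated by the change in $\be^{N-1}_b(2p)$ --- is precisely what forces the density to be $\bigl(\tfrac{b}{p}\sinh\tfrac{p}{b}\bigr)^{N-2}$ evaluated at $2p$ rather than at $p$, and what produces the factor $2^{N-1}$; you label this ``the main obstacle'' and describe ``the task'' without carrying it out, and your heuristic about fibers embedding as totally real copies of $H^{N-1}(b)$ with the coordinate rescaled by $2$ is an assertion of the conclusion, not an argument for it. The proposed alternative via the K\"ahler volume form of the adapted complex structure is likewise only gestured at (and one would still have to match that volume form against the explicit density $2^{N-1}\be^{N-1}_b(2p)$, which is again a computation of the same difficulty). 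A workable way to close the gap within your own framework would be: use $\SO(N)$-invariance to show that the unique invariant measure must have the form $g(p)\,d\mb p\,d\ovl\sig^{N-1}_b(\mb x)$ for some radial density $g$, then determine $g$ by pushing the single boost $R_{1N}(i\tau)$ through the base point $\mb x=(b,0,\dots,0)$ along the geodesic in the $x_N$-direction and solving the resulting first-order identity for $g$. As written, the proposal is a correct plan with the decisive step missing.
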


Finally, we obtain the following conclusion.
\begin{thm}[{\cite[Thm.~5]{HallMit2002}}]\label{thm:SB-sphere}
	For any $T>0$, denote the measure $\theta^{N-1}_b(2T,2p)\,dh^{N-1}_b(\mb x,\mb p)$ on the quadric $Q^{N-1}(b)$ by $d\nu^{N-1}_{b,T}$. Then, the Segal--Bargmann transform $\cal C^{N-1}_{b,T}$ in Def.~\ref{def:Segal-Bargmann sphere} is a unitary map from $L^2\big(S^{N-1}(b),\ovl\sig^{N-1}_b\big)$ onto the space $\cal HL^2\big(Q^{N-1}(b),\nu^{N-1}_{b,T}\big)$ of holomorphic functions $F$ on $Q^{N-1}(b)$ for which
	\begin{align*}
		\int_{\mb x\in S^{N-1}(b)}\int_{\mb x \cdot \mb p=0}|F(\mb a(\mb x,\mb p))|^2\,d\nu^{N-1}_{b,T}<\infty.
	\end{align*} 
\end{thm}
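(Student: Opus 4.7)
The plan is to use the spherical harmonic decomposition $L^2(S^{N-1}(b),\ovl\sig^{N-1}_b)=\bigoplus_{l\ge 0}\cal H^{N-1}_l$, where $\cal H^{N-1}_l$ is the eigenspace of $\Del_{S^{N-1}(b)}$ with eigenvalue $-\lam_l=-l(l+N-2)/b^2$. Each $\phi\in\cal H^{N-1}_l$ is the restriction of a homogeneous harmonic polynomial $P_\phi$ of degree $l$ on $\R^N$, so the heat operator acts as $e^{\frac T2\Del_{S^{N-1}(b)}}\phi=e^{-\lam_l T/2}\phi$, and $\cal C^{N-1}_{b,T}\phi=e^{-\lam_l T/2}\,P_\phi|_{Q^{N-1}(b)}$, which is the restriction of an entire holomorphic function on $\C^N$. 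This immediately verifies that the image lies in $\cal H(Q^{N-1}(b))$, so the proof reduces to the isometry and surjectivity claims.

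For the single-degree isometry, I would use Prop.~\ref{prop:quadric-diffeomorphic-cotang} and Prop.~\ref{prop:invariant-action-quadric} to unfold
\begin{align*}
\|\cal C^{N-1}_{b,T}\phi\|^2_{\cal HL^2}=e^{-\lam_l T}\int_{S^{N-1}(b)}I(T,\mb x)\,d\ovl\sig^{N-1}_b(\mb x),
\end{align*}
where $I(T,\mb x)=\int_{\mb p\perp\mb x}|P_\phi(\mb a(\mb x,\mb p))|^2\,\theta^{N-1}_b(2T,2p)\cdot 2^{N-1}\be^{N-1}_b(2p)\,d\mb p$. For fixed $\mb x$, averaging $|P_\phi(\mb a(\mb x,\cdot))|^2$ over the isotropy subgroup $\SO(N-1)_{\mb x}$ leaves $I(T,\mb x)$ unchanged (since $\theta^{N-1}_b$ and $\be^{N-1}_b$ are radial) and produces a function $h_{\mb x}$ that is radial in $\mb p$ with $h_{\mb x}(\mb 0)=|\phi(\mb x)|^2$. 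The crucial observation is that, since $P_\phi$ is harmonic and homogeneous of degree $l$, combining \eqref{eqn:sphere-Laplacian-1}--\eqref{eqn:sphere-Laplacian-2} yields $J^2_{\mb a,N}P_\phi=l(l+N-2)P_\phi=\lam_l b^2\,P_\phi$ on all of $\C^N$, and likewise $J^2_{\ovl{\mb a},N}\ovl{P_\phi}=\lam_l b^2\,\ovl{P_\phi}$; by the Leibniz rule these descend to $J^2_{\mb a,N}h_{\mb x}=J^2_{\ovl{\mb a},N}h_{\mb x}=\lam_l b^2\,h_{\mb x}$. Prop.~\ref{prop:Gamma-p-function} then translates this into the statement that $h_{\mb x}$, expressed in the hyperbolic radial variable $r=2p$, is an eigenfunction of $\Del_{H^{N-1}(b)}$ with eigenvalue $\lam_l$.

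Combining this eigenfunction property with the hyperbolic heat equation for $\theta^{N-1}_b$ and self-adjointness of $\Del_{H^{N-1}(b)}$ with respect to the hyperbolic volume $\be^{N-1}_b(r)\,d\mb q$ yields the ODE
\begin{align*}
\frac{d}{dT}I(T,\mb x)=\lam_l\,I(T,\mb x),
\end{align*}
where the Gaussian decay in Prop.~\ref{prop:estimate-heat-hyperbolic} justifies differentiation under the integral and kills boundary terms in the necessary integration by parts. Together with the initial condition $I(0,\mb x)=|\phi(\mb x)|^2$, extracted from the $s\downarrow 0$ delta behavior of $\theta^{N-1}_b(s,\cdot)$, this gives $I(T,\mb x)=e^{\lam_l T}|\phi(\mb x)|^2$. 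The prefactor $e^{-\lam_l T}$ cancels, and integration in $\mb x$ produces $\|\cal C^{N-1}_{b,T}\phi\|^2_{\cal HL^2}=\|\phi\|^2_{L^2(S^{N-1}(b))}$. The orthogonality of the images $\{P_\phi|_{Q^{N-1}(b)}\}$ across distinct $l$, needed to patch the identity over the decomposition, follows from the same eigenvalue/heat-kernel argument applied to cross terms $P_\phi\,\ovl{P_\psi}$ with $\phi,\psi$ in different $\cal H^{N-1}_l$'s.

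The main obstacle is the bookkeeping in the previous step: the $J^2_{\mb a,N}$ and $J^2_{\ovl{\mb a},N}$ eigenvalue identities for the product $|P_\phi|^2$, the commutativity of $J^2$ with the $\SO(N-1)_{\mb x}$-averaging, and the factors of $2$ arising in Prop.~\ref{prop:Gamma-p-function} (where the evaluation is at $2\mb p$ and the time parameter is $2T$) must all align so that the exponential rate in the heat-semigroup evolution is exactly $\lam_l$. Finally, for surjectivity, I would use the Gaussian decay of $\theta^{N-1}_b$ from Prop.~\ref{prop:estimate-heat-hyperbolic} to show that restrictions of holomorphic polynomials on $\C^N$ to $Q^{N-1}(b)$ are dense in $\cal HL^2(Q^{N-1}(b),\nu^{N-1}_{b,T})$; every such polynomial decomposes into a finite sum of homogeneous harmonic pieces $P_\phi$, each already in the image of $\cal C^{N-1}_{b,T}$, and density together with the established isometry delivers the asserted unitarity onto the full space.
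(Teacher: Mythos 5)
The paper does not prove this theorem: it is quoted verbatim from Hall and Mitchell \cite{HallMit2002}*{Thm.~5} and used as an input, so there is no in-paper proof to compare against. Your outline is, in substance, a correct reconstruction of the Hall--Mitchell argument itself: the degree-by-degree reduction via spherical harmonics, the identity $J^2_{\mb a,N}P_\phi=l(l+N-2)P_\phi$ for a harmonic homogeneous $P_\phi$ (valid on all of $\C^N$ since the $-\mb a^2\sum\pt^2/\pt a_j^2$ term dies by harmonicity), the $\SO(N-1)_{\mb x}$-averaging to produce a radial eigenfunction of the radial hyperbolic Laplacian via Prop.~\ref{prop:Gamma-p-function}, and the resulting ODE $I'(T)=\lam_l I(T)$ whose solution cancels the $e^{-\lam_l T}$ prefactor --- all of this matches the structure of the cited proof, and the sign and scaling conventions you flag do work out. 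The one place your sketch is genuinely thin is surjectivity: ``Gaussian decay of $\theta^{N-1}_b$ implies density of restricted holomorphic polynomials in $\cal HL^2(Q^{N-1}(b),\nu^{N-1}_{b,T})$'' is an assertion, not an argument; one must actually show that an $F\in\cal HL^2$ orthogonal to every $P_\phi|_{Q^{N-1}(b)}$ vanishes, which requires expanding a general holomorphic function on the quadric into its homogeneous harmonic components and controlling the convergence of that expansion in the weighted $L^2$ norm. That step is where the real work in the completeness half of Hall--Mitchell's theorem lies, and as written your proposal does not supply it.
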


\section{Large-$N$ Limits of Spherical Measures and Spherical Laplacians}\label{sec:large-N-sphere}
\subsection{Large-$N$ limit of spherical measures} \label{sec:poly-sphere-meas}
In the sequel, when $b=\sqrt N$, we will simply write the normalized volume measure on $S^{N-1}(\sqrt N)$ as $\ovl\sig^{N-1}$ instead of $\ovl\sig^{N-1}_{\sqrt N}$. We now summarize the notion of convergence of the spherical measure $\ovl{\sig}^{N-1}$ to the standard Gaussian measure $\mu^\infty_1$. The relation between spherical measures on large sphere and Gaussian measure was studied in statistical mechanics (see, e.g., Boltzmann \cite{Boltzmann}). Later, there were several mathematical formulations done by Wiener \cite{Wie23}, L\'evy \cite{Levy22}, and Hida and Nomoto \cite{HiNo64}. The work by Umemura and Kono \cite{UmKo65} about limiting behaviors of spherical harmonics and the spherical Laplacian was probably the first attempt to study convergence of measures by letting the dimension of the sphere go to infinity. The approach by Petersen and Sengupta \cite{PeSen} is slightly different as the authors consider polynomials as functions in the Hilbert space $L^2\big(S^{N-1}(\sqrt{N}),\ovl\sig^{N-1}\big)$. 

The main results of \cite{UmKo65} and \cite{PeSen} revolve around the following important observation, which also play a key role in understanding polynomials of complex variables.
\begin{lem}\label{lem:maps-of-sph-lap}
	For integers $k,N$ with $k<N$, and any polynomial $f\in \cal P_k(\R)$, there exists a unique polynomial $g\in\cal P_k(\R)$ such that 
	\begin{align*}
		\Del_{S^{N-1}(b)} f\big|_{S^{N-1}(b)}=g\big|_{S^{N-1}(b)}.
	\end{align*}
\end{lem}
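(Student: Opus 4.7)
The plan is to invoke formula \eqref{eqn:sphere-Laplacian-1} from Prop.~\ref{prop:spherical-laplacian} directly on $f\in\cal P_k(\R)$, viewed as a polynomial on $\R^N$, and exploit the fact that on $S^{N-1}(b)$ the extrinsic quantity $r^2 = x_1^2+\dots+x_N^2$ is identically $b^2$. By Rem.~\ref{rem:polynomials}(2) one has $\Del_{\R^N} f = \Del_{\R^k} f$ and $(r\pt_r)_N f = (r\pt_r)_k f$, and both of these --- together with $(r\pt_r)_k^2 f$ --- already lie in $\cal P_k(\R)$, since the Cauchy--Euler operator rescales each homogeneous component by its degree and the Laplacian decreases the degree by $2$.

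Substituting into \eqref{eqn:sphere-Laplacian-1} and replacing $r^2$ by $b^2$ on $S^{N-1}(b)$ yields the explicit candidate
\[
g := \Del_{\R^k} f - \tfrac{1}{b^2}(r\pt_r)_k^2 f - \tfrac{N-2}{b^2}(r\pt_r)_k f \in \cal P_k(\R),
\]
which by construction satisfies $\Del_{S^{N-1}(b)} f\big|_{S^{N-1}(b)} = g\big|_{S^{N-1}(b)}$, settling existence.

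For uniqueness it suffices to show that any $h\in\cal P_k(\R)$ vanishing on $S^{N-1}(b)$ is identically zero. Here the hypothesis $k<N$ is used: for every $(x_1,\dots,x_k)$ in the open ball $\{x_1^2+\dots+x_k^2<b^2\}\subset\R^k$, one may choose $(x_{k+1},\dots,x_N)\in\R^{N-k}$ with $x_{k+1}^2+\dots+x_N^2 = b^2-(x_1^2+\dots+x_k^2)>0$, placing the resulting $N$-tuple on $S^{N-1}(b)$. Hence $h$ vanishes on a nonempty open subset of $\R^k$, forcing $h\equiv 0$.

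There is no serious obstacle here; the only subtle points are to remember that the $r^2$ in \eqref{eqn:sphere-Laplacian-1} is the full $\R^N$-radius (\emph{not} $x_1^2+\dots+x_k^2$), so that it becomes a constant upon restriction, and to notice that $k<N$ is essential --- when $k=N$ the polynomial $x_1^2+\dots+x_N^2-b^2$ defeats uniqueness.
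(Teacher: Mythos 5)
Your proof is correct and follows essentially the route the paper intends: existence comes from substituting $r^2=b^2$ into formula \eqref{eqn:sphere-Laplacian-1} of Prop.~\ref{prop:spherical-laplacian} together with Rem.~\ref{rem:polynomials}(2), yielding exactly the paper's explicit $g$ (which specializes to \eqref{eqn:sph-Lap-poly} when $b=\sqrt N$), and uniqueness comes from the observation that a polynomial in $k<N$ variables vanishing on $S^{N-1}(b)$ vanishes on an open ball in $\R^k$ and hence identically --- the same style of argument the paper uses for the complex analogue in Lem.~\ref{lem:power-series-hyperbolic}(i).
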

In particular, if $b=\sqrt N$, then  Prop.~\ref{prop:spherical-laplacian} gives us 
\begin{align}
	g=\left(\Del_{\R^N}-(r\pt_r)_k+\frac{1}{N}(2(r\pt_r)_k-(r\pt_r)^2_k)\right)f.\label{eqn:sph-Lap-poly},
\end{align}
where $(r\pt_r)_k$ is as in Rem.~\ref{rem:polynomials}.

It is important that in the statement of the lem	ma we need the hypothesis $k<N$ and the conclusion that $g$ is the \emph{unique} polynomial of the same $k$ variables. This uniqueness in particular means that we can consider $\Del_{S^{N-1}(b)}$ as a map from $\cal P_k(\R)$ to itself, for all $k\in\N$.

\smallskip

For any $t>0$, recall the Gaussian measure $\mu^1_t$ on $\R$. Let $\cal B(\R^k)$ be the $\sig$-algebra of Borel sets on $\R^k$. For $k\le l$, we consider $\R^k$ as the set of the first $k$ coordinates in $\R^l$. The set $\cal A^0_t=\bigcup^{\infty}_{j=1}\cal B(\R^j)$ is an algebra. The theory of Infinite Product Measure (see, e.g., \cite[Sect.~3.5]{Bog2007}) shows that there exists a unique probability measure $\mu^\infty_t$ defined on $\sig( \cal A^0_t)$, the smallest $\sig$-algebra  generated by $\cal A^0_t$, such that 
\begin{align*}
	\mu^\infty_t(E) = \mu^k_t(E),\quad\textrm{for all } E\in\cal B(\R^k).
\end{align*}
The measure $\mu^\infty_t$ is compatible with $\mu^k_t$ in the sense that for any function $f\in L^1 \left(\R^k,\mu^k_t\right)$ considered as a function on $\R^\infty$, 
\begin{align*}
	\int_{\R^\infty} fd\mu^{\infty}_t=\int_{\R^k} fd\mu^k_t.
\end{align*}

Let us introduce notation
\begin{align*}
	M_t:=\bigcup_{k=1}^\infty L^2(\R^k,\mu^k_t).
\end{align*}
For $l\le k$, we can regard $L^2(\R^l,\mu^l_1)$ as a subspace of $L^2(\R^k,\mu^k_t)$ , so that 
\begin{align*}
	L^2(\R,\mu^1_t)\subset L^2(\R^2,\mu^2_t) \subset L^2(\R^3,\mu^3_t) \subset \dots \subset M_t\subset L^2(\R^\infty,\mu^\infty_t).
\end{align*}
The space $L^2(\R^\infty,\mu^\infty_t)$ satisfies the following property.
\begin{lem}\label{lem:dense}
	For any $t\in(0,\infty]$, the space $M_t$ is dense in $L^2(\R^\infty, \mu^\infty_t)$.
\end{lem}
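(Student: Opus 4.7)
The plan is a classical cylindrical-approximation argument based on conditional expectations onto the sub-$\sig$-algebras generated by the first $k$ coordinates, together with the $L^2$-martingale convergence theorem. The goal is to show that for every $f \in L^2(\R^\infty, \mu^\infty_t)$, one can produce a sequence $(f_k)$ in $M_t$ with $f_k \to f$ in $L^2$-norm.

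First, I would set $\cal F_k := \sig(\pi_1, \ldots, \pi_k)$, where $\pi_j \colon \R^\infty \to \R$ is the $j$-th coordinate projection. Under the identification of $\R^k$ with the first $k$ coordinates of $\R^\infty$ used throughout the paper, the algebra $\cal A^0_t = \bigcup_{j \ge 1}\cal B(\R^j)$ coincides with $\bigcup_{k \ge 1}\cal F_k$, so $(\cal F_k)_{k \ge 1}$ is an increasing filtration whose generated $\sig$-algebra is precisely $\sig(\cal A^0_t)$, the domain of $\mu^\infty_t$.

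Second, given $f \in L^2(\R^\infty,\mu^\infty_t)$, I would define $f_k := E[f \mid \cal F_k]$, the conditional expectation with respect to $\mu^\infty_t$. By the Doob--Dynkin lemma, any $\cal F_k$-measurable function on $\R^\infty$ has the form $g(\pi_1, \ldots, \pi_k)$ for some Borel $g \colon \R^k \to \C$, and the product-measure compatibility $\int_{\R^\infty}h\,d\mu^\infty_t = \int_{\R^k}h\,d\mu^k_t$ for functions $h$ of only $k$ variables gives $\|f_k\|_{L^2(\R^\infty,\mu^\infty_t)} = \|g\|_{L^2(\R^k,\mu^k_t)} \le \|f\|_{L^2(\R^\infty,\mu^\infty_t)}$. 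Hence $f_k$ may be identified with a genuine element of $L^2(\R^k,\mu^k_t) \subseteq M_t$. Then $(f_k)_{k \ge 1}$ is a Doob martingale in $L^2(\R^\infty,\mu^\infty_t)$ with respect to $(\cal F_k)$, and the $L^2$-martingale convergence theorem, combined with the fact that $f$ is already $\sig(\cal A^0_t)$-measurable, yields $f_k \to E[f \mid \sig(\cal A^0_t)] = f$ in $L^2$-norm, establishing density.

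The only real obstacle is a minor bookkeeping point in the second step: one has to verify that the abstract conditional expectation $f_k$ actually corresponds to a concrete element of $L^2(\R^k,\mu^k_t)$, rather than merely to an equivalence class of $\cal F_k$-measurable functions on $\R^\infty$. This follows immediately from the product-measure construction of $\mu^\infty_t$ and uses no estimates beyond the marginal identity recorded above. For the limiting parameter $t = \infty$ appearing in the statement, the same argument goes through verbatim once $\mu^\infty_\infty$ is interpreted consistently with that same marginal/product structure, since the proof relies only on the tower of finite-dimensional marginals and the tower property of conditional expectations, and not on any quantitative feature of the Gaussian kernel.
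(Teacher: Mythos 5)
Your proof is correct, but it takes a genuinely different route from the paper's. The paper argues at the level of sets: since simple functions are dense in $L^2(\R^\infty,\mu^\infty_t)$, it suffices to show the indicator of every measurable set lies in $\ovl{M_t}$; the collection of such sets contains the algebra $\cal A^0_t$ by construction and is a monotone class (monotone/dominated convergence, using that $\mu^\infty_t$ is a finite measure), so the Monotone Class Lemma yields all of $\sig(\cal A^0_t)$. You instead start from an arbitrary $f\in L^2(\R^\infty,\mu^\infty_t)$ and exhibit an explicit approximating sequence, the conditional expectations $f_k=E[f\mid\cal F_k]$, which are precisely the orthogonal projections of $f$ onto the closed subspaces $L^2(\R^k,\mu^k_t)$; $L^2$-martingale convergence plus the fact that $f$ is already $\sig(\cal A^0_t)$-measurable gives $f_k\to f$. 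Your bookkeeping step (Doob--Dynkin plus the marginal identity to realize $f_k$ as a genuine element of $L^2(\R^k,\mu^k_t)$) is exactly right and is licensed by the compatibility relation the paper records for $\mu^\infty_t$. The trade-off: the paper's argument is more elementary and shorter, and note that the identification $\lim_k E[f\mid\cal F_k]=E[f\mid\sig(\bigcup_k\cal F_k)]$ inside the martingale theorem is itself standardly proved by a Dynkin/monotone-class argument, so the same measure-theoretic engine runs under the hood of your proof; what your approach buys is more information, namely a canonical best $L^2$-approximant of $f$ by functions of the first $k$ coordinates, rather than a bare density statement. Both arguments share the same caveat at $t=\infty$, which you flag: each relies on $\mu^\infty_t$ being a finite product measure, so the endpoint case requires whatever interpretation of $\mu^\infty_\infty$ preserves that structure, a point on which the paper is equally silent.
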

\begin{proof}[Proof]
	Note that simple functions are dense in $L^2(\R^\infty,\mu^\infty_t)$, so it suffices to show that indicator functions of measurable sets are in the closure of $M_t$. Hence, let $\cal B$ be the collection of sets whose indicator functions are in $\ovl M_t$. By assumption, $\cal B$ contains the algebra $\cal A^0_t$. Moreover, by Monotone Convergence Theorem, $\cal B$ is a monotone class. Therefore, by Monotone Class Lemma, $\cal B$ is the whole $\sig$-algebra $\sig(\cal A^0_t)$.
\end{proof}
An equivalent statement of this result is that the space $L^2(\R^\infty, d\mu^\infty_t)$ is the closure with respect to $\mu^\infty_t$ of the set of all functions $f$ of \emph{finitely} many variables such that 
\begin{align*}
	\| f\|_{L^2(\R^\infty,\mu^\infty_t)}:=\left(\int_{\R^\infty} |f|^2\,d\mu^\infty_t\right)^{\frac{1}{2}}<\infty.
\end{align*}
We will use this property in the proof of the Main Theorem of the next section. 

Finally, we cite some results about the convergence of $\ovl \sig^{N-1}$ to $\mu^\infty_1$ from \cite[Prop.~1]{UmKo65} and \cite[Thm.~2.1]{PeSen}.

\begin{thm}\label{thm:limitsph}
Denote by $\ovl\sig^{N-1}_{k}$ $(N>k)$ the joint distribution of $x_1,x_2,\dots,x_k$, that is, for any Borel subset $E\subset \R^k$,
\begin{align*}
	\ovl\sig^{N-1}_k(E)=\ovl\sig^{N-1}\left(\{(x_1,\dots,x_k,\dots,x_N)\in S^{N-1}(\sqrt N)\colon (x_1,\dots,x_k)\in E\}\right).
\end{align*}
Then,
\begin{align*}
	\lim_{N\to\infty} \ovl\sig^{N-1}_k(E)=\mu^k_1(E).
\end{align*}
The convergence is uniform for all Borel subsets $E$. 

Furthermore, for $p_1,p_2\in\cal P_k(\R)$, we have 
\begin{align*}
		\lim_{N\to\infty} 
		\la p_1,p_2\ra_{L^2(S^{N-1}(\sqrt{N}),\ovl{\sig}^{N-1})}
		=\la p_1,p_2\ra_{L^2(\R^\infty,\mu^\infty_1)}.
	\end{align*}
\end{thm}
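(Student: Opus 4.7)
My plan is to treat the two assertions separately: the total-variation convergence of marginals, and then the convergence of polynomial $L^2$-inner products.

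For the first assertion, I plan to work with the explicit density of the marginal of $\ovl\sig^{N-1}$ onto the first $k$ coordinates. Conditioning on $(x_1,\dots,x_k)$ and noting that $(x_{k+1},\dots,x_N)$ is uniformly distributed on a sphere of radius $\sqrt{N-|\mb x|^2}$ inside $\R^{N-k}$ yields the density
\begin{align*}
\frac{d\ovl\sig^{N-1}_k}{d\mb x}(\mb x) = \frac{\Ga(N/2)}{(N\pi)^{k/2}\,\Ga((N-k)/2)}\left(1-\frac{|\mb x|^2}{N}\right)^{(N-k-2)/2} \mb 1_{\{|\mb x|^2<N\}}.
\end{align*}
Stirling's formula gives $\Ga(N/2)/\Ga((N-k)/2)\sim (N/2)^{k/2}$, and $\bigl(1-|\mb x|^2/N\bigr)^{(N-k-2)/2}\to e^{-|\mb x|^2/2}$ pointwise, so the density converges pointwise on $\R^k$ to the standard Gaussian density $(2\pi)^{-k/2}e^{-|\mb x|^2/2}$. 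Because both are probability densities with respect to Lebesgue measure, Scheff\'e's lemma upgrades this pointwise convergence to $L^1$-convergence of densities, which is equivalent to total-variation convergence $\|\ovl\sig^{N-1}_k-\mu^k_1\|_{\mathrm{TV}}\to 0$. The claimed uniform convergence over Borel sets is then immediate from $|\ovl\sig^{N-1}_k(E)-\mu^k_1(E)|\le\|\ovl\sig^{N-1}_k-\mu^k_1\|_{\mathrm{TV}}$.

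For the second assertion, my plan is to reduce to monomials by bilinearity and then evaluate each moment explicitly. By the symmetry $x_j\mapsto -x_j$, any monomial $x^\al=x_1^{\al_1}\cdots x_k^{\al_k}$ with some odd $\al_j$ integrates to zero against both $\ovl\sig^{N-1}$ and $\mu^\infty_1$, so only even-exponent monomials $x^{2\mb m}$ need attention. Writing $x_j=\sqrt N\,Z_j/|\mb Z|$ for a standard Gaussian vector $\mb Z$ on $\R^N$ and using the independence of $\mb Z/|\mb Z|$ from $|\mb Z|^2\sim\chi^2_N$, one obtains
\begin{align*}
\int_{S^{N-1}(\sqrt N)} x^{2\mb m}\,d\ovl\sig^{N-1}
= \prod_{j=1}^k (2m_j-1)!! \cdot \frac{N^M\,\Ga(N/2)}{2^M\,\Ga(N/2+M)},
\end{align*}
where $M=m_1+\cdots+m_k$. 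Stirling's asymptotic $\Ga(N/2+M)/\Ga(N/2)\sim(N/2)^M$ forces the second factor to tend to $1$, so the integral converges to $\prod_j (2m_j-1)!!$, which is precisely $\int_{\R^\infty}x^{2\mb m}\,d\mu^\infty_1$. Expanding the product $p_1 p_2\in\cal P_k(\R)$ as a linear combination of such monomials then yields the convergence of inner products.

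The step I expect to be most delicate is upgrading the natural pointwise convergence of densities to the uniform (equivalently, total-variation) convergence demanded by the theorem; pure pointwise convergence would only give weak convergence of measures. Scheff\'e's lemma is exactly what bridges this gap, crucially exploiting that both $\ovl\sig^{N-1}_k$ and $\mu^k_1$ are probability measures. By contrast, the moment computation in the second part is essentially a Gamma-function asymptotics exercise once the polar decomposition of the standard Gaussian is in hand.
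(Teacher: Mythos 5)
Your proposal is correct. Note that the paper does not prove Thm.~\ref{thm:limitsph} at all: it is cited as an imported result from Umemura--Kono \cite{UmKo65} and Petersen--Sengupta \cite{PeSen}, so what you have written is a self-contained proof of a statement the paper merely quotes. Both halves of your argument check out. The marginal density $\frac{\Ga(N/2)}{(N\pi)^{k/2}\Ga((N-k)/2)}\bigl(1-\tfrac{|\mb x|^2}{N}\bigr)^{(N-k-2)/2}$ is the correct pushforward of $\ovl\sig^{N-1}$ onto the first $k$ coordinates, the Stirling asymptotics are right, and Scheff\'e's lemma is exactly the right tool to convert pointwise convergence of probability densities into total-variation (hence uniform-over-Borel-sets) convergence --- this is genuinely needed, since the statement asks for more than weak convergence. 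The moment computation via the polar decomposition $\mb x=\sqrt N\,\mb Z/|\mb Z|$, with $\mb Z/|\mb Z|$ independent of $|\mb Z|^2\sim\chi^2_N$, gives the exact identity you state, and $\Ga(N/2+M)/\Ga(N/2)\sim (N/2)^M$ finishes it; this is essentially the route taken in \cite{PeSen}, which works with $L^2$ pairings of polynomials directly. Two cosmetic points: the inner product is $\int p_1\ovl{p_2}$, so you should expand $p_1\ovl{p_2}$ (still a polynomial in $\cal P_k(\R)$ with complex coefficients, so nothing changes); and it is worth one sentence observing that the second assertion does not follow from the first, since polynomials are unbounded and total-variation convergence alone does not control their integrals --- your separate exact moment computation is what closes that gap.
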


\subsection{The Hermite differential operator and large-$N$ limit of spherical Laplacians}\label{sec:Hermite}
Similar to the Euclidean case, we have the series expansion of the heat operator $e^{\frac{T}{2}\Del_{S^{N-1}(b)}}$.
\begin{prop}\label{prop:heat-powerseries-sphere}
	For any $T>0$ and $q\in\cal P_k(\R)$ considered as a function on $S^{N-1}(b)$, $N>k$, we have
	\begin{align*}
		e^{\frac{T}2\Del_{S^{N-1}(b)}} q=
		\sum_{n=0}^\infty \frac{1}{n!}\left(\frac{T}{2}\Del_{S^{N-1}(b)}\right)^n q,
	\end{align*}
	where the operator on the left-hand side is the heat operator defined in \eqref{eqn:heat-real-transform}.
\end{prop}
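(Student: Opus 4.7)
My plan is to exploit the fact that $\Del_{S^{N-1}(b)}$ preserves the finite-dimensional space of polynomials of bounded degree (viewed on the sphere), reducing the identity to the standard agreement between the heat semigroup and the matrix exponential on a finite-dimensional invariant subspace.

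First I establish invariance. Restricting formula \eqref{eqn:sphere-Laplacian-1} to $S^{N-1}(b)$, where $r^2\equiv b^2$, and invoking Rem.~\ref{rem:polynomials} to replace $\Del_{\R^N}$ and $(r\pt_r)_N$ by the corresponding operators on $\R^k$ (valid since $N>k$), I find that for every $f\in\cal P_k^{\le l}(\R)$ the function $\Del_{S^{N-1}(b)}f$ agrees on the sphere with an element of $\cal P_k^{\le l}(\R)$, since none of $\Del_{\R^k}$, $(r\pt_r)_k$, or $(r\pt_r)_k^2$ raises polynomial degree. Setting $l=\deg q$ and letting $V\subset L^2(S^{N-1}(b),\ovl\sig^{N-1}_b)$ denote the image of $\cal P_k^{\le l}(\R)$ under restriction to the sphere, $V$ is a finite-dimensional, $\Del_{S^{N-1}(b)}$-invariant subspace containing $q$.

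Because $\Del_{S^{N-1}(b)}$ is symmetric on $L^2(S^{N-1}(b),\ovl\sig^{N-1}_b)$, its restriction to the finite-dimensional invariant subspace $V$ is self-adjoint and admits an $L^2$-orthonormal eigenbasis $\{\phi_j\}_{j=1}^d$ with $\Del_{S^{N-1}(b)}\phi_j=\lambda_j\phi_j$. Each $\phi_j$ is smooth, so by the standard eigenfunction expansion of the compact-manifold heat kernel $\rho^{N-1}_{b,T,\mb x}$, the heat operator defined in \eqref{def:Segal-Bargmann sphere} acts by $e^{\frac{T}{2}\Del_{S^{N-1}(b)}}\phi_j=e^{T\lambda_j/2}\phi_j$. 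Writing $q=\sum_{j=1}^d c_j\phi_j$ (a finite sum in $V$), both sides of the asserted identity reduce to the same expression $\sum_j c_j e^{T\lambda_j/2}\phi_j$: the left-hand side by the eigenaction of the heat semigroup, and the right-hand side because $(\tfrac{T}{2}\Del_{S^{N-1}(b)})^n\phi_j=(T\lambda_j/2)^n\phi_j$ and $\sum_{n=0}^\infty(T\lambda_j/2)^n/n!=e^{T\lambda_j/2}$. Convergence of the series in $V$ is automatic by finite-dimensionality; the main nontrivial input is the spectral-theoretic identification of the kernel operator \eqref{def:Segal-Bargmann sphere} with the functional calculus of $\Del_{S^{N-1}(b)}$ on eigenfunctions, which is classical for compact Riemannian manifolds.
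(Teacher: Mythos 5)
The paper states Prop.~\ref{prop:heat-powerseries-sphere} without proof, presenting it as the spherical analogue of Thm.~\ref{thm:heat-powerseries} (which is itself cited to the literature), so there is no in-paper argument to compare against; judged on its own, your proof is correct. The three inputs are all sound: the invariance of $V=\cal P_k^{\le l}(\R)\big|_{S^{N-1}(b)}$ under $\Del_{S^{N-1}(b)}$ is exactly Lem.~\ref{lem:maps-of-sph-lap} combined with the degree bookkeeping from \eqref{eqn:sphere-Laplacian-1} and Rem.~\ref{rem:polynomials} (on the sphere $r^2\equiv b^2$, and neither $\Del_{\R^k}$ nor $(r\pt_r)_k$ raises degree); the Laplace--Beltrami operator is symmetric on smooth functions of a compact manifold, so its restriction to the finite-dimensional invariant space $V$ is Hermitian and diagonalizable (your $\phi_j$ are in fact sums of spherical harmonics); and the identity $e^{\frac T2\Del_{S^{N-1}(b)}}\phi_j=e^{T\lambda_j/2}\phi_j$ for the heat-kernel integral acting on a genuine eigenfunction is classical. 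Finite-dimensionality of $V$ then settles convergence of the operator series, and the two sides agree coefficientwise in the eigenbasis. One small remark: you do not actually need to diagonalize. Since $V$ is finite-dimensional and $\Del_{S^{N-1}(b)}$-invariant, both $T\mapsto e^{\frac T2\Del_{S^{N-1}(b)}}q$ and the matrix-exponential series solve the linear ODE $\dot u=\tfrac12\Del_{S^{N-1}(b)}u$ in $V$ with initial value $q$, and uniqueness (of solutions of the heat equation on a compact manifold, or simply of linear ODEs on $V$) gives the equality directly; this is the same mechanism the paper uses later in the proof of Prop.~\ref{prop:heat-integral} for the quadric measure. Also note the equation reference in the statement of the proposition should point to \eqref{def:Segal-Bargmann sphere} rather than \eqref{eqn:heat-real-transform}; you implicitly corrected this.
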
	

Let us introduce the following notations:
\begin{align*}
	&\Del_{\R^\infty} := \sum_{j=1}^\infty \frac{\pt^2}{\pt x_j^2},\quad
	(r\pt_r)_\infty := \sum_{j=1}^\infty x_j\frac{\pt}{\pt x_j}.
\end{align*}
\begin{defi}\label{def:Hermite-op}
	We define the \emph{Hermite differential operator} on $\bigcup_{k=1}^\infty C^2(\R^k)$ to be
	\begin{align*}
		\mb H=\sum_{j=1}^\infty \left(\frac{\pt^2}{\pt x_j^2}-x_j\frac{\pt}{\pt x_j}\right)=\Del_{\R^\infty} -(r\pt_r)_\infty.
	\end{align*}
\end{defi}
\begin{prop}[{\cite[Prop.~3]{UmKo65},\cite[Prop.~5.4]{PeSen}}]\label{prop:Sen-Hermite}
	For any polynomial $q\in \cal P_k(\R)$, the function $\mb H\,q$ is in $\cal P_k(\R)$ and
	\begin{align*}
		\lim_{N\to\infty} \Del_{S^{N-1}(\sqrt N)} q=\mb  H\,q.
	\end{align*}
	The convergence is with respect to an arbitrary norm on the finite-dimensional vector space $\cal P_k(\R)$.
\end{prop}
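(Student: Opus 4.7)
The plan is to obtain an explicit polynomial identity expressing $\Del_{S^{N-1}(\sqrt N)}\,q$ as $\mb H\,q$ plus an error of order $1/N$, and then observe that the whole discussion takes place inside a fixed finite-dimensional vector space, where norm-convergence is automatic.

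First I would use Prop.~\ref{prop:spherical-laplacian} with $b=\sqrt N$, together with the identity $r^2=N$ on $S^{N-1}(\sqrt N)$, to write
\begin{align*}
\Del_{S^{N-1}(\sqrt N)} q\big|_{S^{N-1}(\sqrt N)}
=\left(\Del_{\R^N}-(r\pt_r)_N+\tfrac{2}{N}(r\pt_r)_N-\tfrac{1}{N}(r\pt_r)_N^2\right)q\Big|_{S^{N-1}(\sqrt N)}.
\end{align*}
Since $q\in\cal P_k(\R)$ and $N\ge k$, Rem.~\ref{rem:polynomials}(2) lets me replace $\Del_{\R^N}$ and $(r\pt_r)_N$ by $\Del_{\R^k}$ and $(r\pt_r)_k$ respectively. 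By Lem.~\ref{lem:maps-of-sph-lap}, the polynomial on the right is the unique element of $\cal P_k(\R)$ whose restriction agrees with $\Del_{S^{N-1}(\sqrt N)}q$ on the sphere, so as elements of $\cal P_k(\R)$ we have
\begin{align*}
\Del_{S^{N-1}(\sqrt N)}\,q
=\bigl(\Del_{\R^k}-(r\pt_r)_k\bigr)q
+\frac{1}{N}\bigl(2(r\pt_r)_k-(r\pt_r)_k^2\bigr)q.
\end{align*}
On the other hand, $\mb H=\Del_{\R^\infty}-(r\pt_r)_\infty$ acts on a polynomial of $k$ variables exactly as $\Del_{\R^k}-(r\pt_r)_k$, so the leading-order polynomial above is $\mb H\,q$, which in particular shows $\mb H\,q\in\cal P_k(\R)$.

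Next I would read off the remainder
\begin{align*}
\Del_{S^{N-1}(\sqrt N)}\,q-\mb H\,q=\frac{1}{N}\bigl(2(r\pt_r)_k-(r\pt_r)_k^2\bigr)q.
\end{align*}
Since $(r\pt_r)_k$ preserves degree, the right-hand side lies in the finite-dimensional subspace $\cal P_k^{\le \deg q}(\R)$; call the fixed polynomial $R(q):=\bigl(2(r\pt_r)_k-(r\pt_r)_k^2\bigr)q$. Then in \emph{any} norm $\|\cdot\|$ on the finite-dimensional space $\cal P_k^{\le \deg q}(\R)$ (all such norms being equivalent) one gets $\bigl\|\Del_{S^{N-1}(\sqrt N)}\,q-\mb H\,q\bigr\|=\tfrac{1}{N}\|R(q)\|\to 0$. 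This proves both claims. There is no real obstacle; the only point that needs care is making sure one understands $\Del_{S^{N-1}(\sqrt N)}\,q$ as the \emph{unique polynomial representative} provided by Lem.~\ref{lem:maps-of-sph-lap}, so that the identity above is an equality in $\cal P_k(\R)$ and not merely on the sphere.
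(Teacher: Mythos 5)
Your proposal is correct and is essentially the paper's own argument: the paper derives exactly the identity \eqref{eqn:sph-Lap-poly} from Prop.~\ref{prop:spherical-laplacian} with $b=\sqrt N$ and then states that the result follows by letting $N\to\infty$ there. Your write-up just makes explicit the two points the paper leaves implicit, namely that the remainder $\frac{1}{N}\bigl(2(r\pt_r)_k-(r\pt_r)_k^2\bigr)q$ stays in the fixed finite-dimensional space $\cal P_k^{\le \deg q}(\R)$ and that all norms there are equivalent.
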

A proof to this is simply by letting $N\to\infty$ in \eqref{eqn:sph-Lap-poly}. Another important of the Hermite differential operator $\mb H$ is that it is self-adjoint with respect to the inner product on $L^2(\R^\infty, d\mu^\infty_1)$ \cite[Prop.~5.1]{PeSen}. This suggests that $\mb H$ behaves like the ``Laplacian" on $\R^\infty$ with respect to $\mu^\infty_1$ (i.e., the Dirichlet form operator with respect to $\mu^\infty_1$).

\section{Proof of Main Results}\label{chap:proof}
In this section, we will establish the proof of the Main Theorem (Thm.~\ref{thm:main}) following the commutative diagram at the end of Sect.~\ref{chap:summary}. 
In the sequel, let us fix a time $T>0$. We will establish results both for general $b$ and for the specific case $b=\sqrt N$. 

Several points have already been established in Petersen and Sengupta \cite{PeSen} and in Hall and Mitchell \cite{HallMit2002}: 
\begin{enumerate}[(i)]
	\item The upper horizontal arrow of the diagram is the content of  \cite{HallMit2002} and is stated in Thm.~\ref{thm:SB-sphere};
	\item The vertical arrow on the leftmost part of the diagram is the content of \cite{PeSen} and is stated in Thm.~\ref{thm:limitsph}. 
\end{enumerate}

We complete the proof by showing the following points in this section:
\begin{enumerate}[(i)]
	\item The lower horizontal arrow of the diagram is the content of Thm.~\ref{thm:main} (1), which is proved in Sect.~\ref{sec:limit-horizontal} (Thm.~\ref{thm:horizontal_arrow});
	\item The vertical arrow on the rightmost part of the diagram is the content of Thm.~\ref{thm:main} (2), which is proved in Sect.~\ref{sec:limit-quad-measure} (Thm.~\ref{thm:limitquad});
	\item The commutativity of the diagram is the content of Thm.~\ref{thm:main} (3), which is proved in Sect.~\ref{sec:limitmap} (Thm.~\ref{thm:limitmap}).
\end{enumerate}

\subsection{The space $\cal H L^2(\C^\infty,\ga^\infty_T)$ and the Segal--Bargmann transform on $L^2(\R^\infty, \mu^{\infty}_1)$}\label{sec:limit-horizontal}
Recall that $\cal H(\C^k)$ denotes the space of entire holomorphic functions on $\C^k$. As the complex counterpart of $(r\pt_r)_k$ for real functions, let us introduce the operator $(a\pt_a)_k$ for functions in $\cal H(\C^k)$, given by 
\begin{align}
	(a\pt_a)_k:=\sum_{j=1}^k a_j\frac{\pt}{\pt a_j}.\label{eqn:complex-Euler-op}
\end{align}
We also define the operator $(a\pt_a)_\infty$ as
\begin{align*}
(a\pt_a)_{\infty}:=\sum_{j=1}^\infty a_j\frac{\pt}{\pt a_j},
\end{align*}
which satisfies $(a\pt_a)_\infty\big|_{\cal H(\C^k)}=(a\pt_a)_k$. If $f\in C^\infty(\R^k)$ has a holomorphic extension $f_\C$, then
\begin{align}\label{eqn:commute}
	\left ( (r\pt_r)_k f\right )_\C = (a\pt_a)_k f_\C.
\end{align}

Let us now consider the exponentiation of $(a\pt_a)_k$. 
\begin{defi}\label{def:dilation}
	For any $\lam \in \C$, the operator $e^{\lam (a\pt_a)_k}\colon \cal H(\C^k)\to\cal H(\C^k)$ is given by 
	\begin{align*}
		(e^{\lam(a\pt_a)}f)(\mb a)=f(e^{\lam}\mb a),
	\end{align*}
	that is, $e^{\lam(a\pt_a)}$ acts on $f$ by dilating the variable $\mb a$ by a factor $e^{\lam}$.
	
	Similarly, for any $\lam\in\R$, the operator $e^{\lam (r\pt_r)_k}\colon C^\infty(\R^k)\to C^\infty(\R^k)$ is given by 
	\begin{align*}
		(e^{\lam(r\pt_r)_k}f)(\mb x)=f(e^{\lam}\mb x).
	\end{align*}
\end{defi}

Recall from Thm.~\ref{thm:heat-real-general} that, for $0<\frac{t}{2}<s<\infty$, the Segal--Bargmann transform  $\cal B_{s,t}^k$ defines a unitary map from $L^2(\R^N, \mu_1^k(\mb x))$ onto $\cal HL^2(\C^k,\xi^k_{s,t}(\mb u,\mb v))$, where
\begin{align*}
	\xi^{k}_{s,t}(\mb u+i\mb v)=(\pi(2s-t))^{-\frac{k}{2}}(\pi t)^{-\frac{k}{2}} e^{-\frac{\mb u^2}{2s-t}-\frac{\mb v^2}{t}}.
\end{align*}
Also, recall the measure $\ga^1_T$ from \eqref{eqn:meas-gamma}. This gives rise to the following Gaussian measure $\ga^k_T$ on $\C^k=\R^{2k}$:
\begin{align*}
	d\ga^k_T(\mb u,\mb v)= \ga^k_T(\mb u+i\mb v)\,d\mb u\,d\mb v= (\pi(e^T+1))^{-\frac k2}(\pi(e^T-1))^{-\frac k2} e^{-\frac{\mb u^2}{e^T+1}}
	e^{-\frac{\mb v^2}{e^T-1}}\,d\mb u\,d\mb v.
\end{align*}

\begin{prop}\label{prop:isometry-quad}
	For any $k\in\N^*$, $T>0$, the map $e^{-\frac{T}{2}(a\pt_a)_k}\cal B^{k}_{1,\ 1-e^{-T}}$ is a unitary map from $L^2(\R^k,\mu^k_1)$ onto $\cal HL^2(\C^k,\ga^k_T)$. In particular, for any $f\in L^2(\R^k,\mu^k_1)$ we have
	\begin{align*}
		\int_{\R^k} |f(\mb x)|^2\,d\mu^k_1(\mb x)=\int_{\C^k}\left|e^{-\frac{T}{2}(a\pt_a)_k}\left(e^{\frac{1-e^{-T}}2\Del_{\R^k}}f\right)_\C(\mb u+i\mb v)\right|^2\,d\ga^k_T(\mb u,\mb v).
	\end{align*}
\end{prop}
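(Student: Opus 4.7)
The plan is to factor the map into two unitaries whose composition gives the claimed transform. First, observe that for $s=1$ and $t=1-e^{-T}$, the condition $0<t<2s$ of Thm.~\ref{thm:heat-real-general} holds because $T>0$ forces $0<1-e^{-T}<1<2$. Therefore the Euclidean Segal--Bargmann transform $\cal B^k_{1,\,1-e^{-T}}$ is a unitary map from $L^2(\R^k,\mu^k_1)$ onto $\cal HL^2(\C^k,\xi^k_{1,\,1-e^{-T}})$. It then suffices to prove that the complex dilation operator $e^{-\frac{T}{2}(a\pt_a)_k}$ is a unitary from $\cal HL^2(\C^k,\xi^k_{1,\,1-e^{-T}})$ onto $\cal HL^2(\C^k,\ga^k_T)$.

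The key algebraic identity to verify is that the two measures are related by a scaling of the variable. Using $2s-t=1+e^{-T}$ and $t=1-e^{-T}$, together with $e^T+1=e^T(1+e^{-T})$ and $e^T-1=e^T(1-e^{-T})$, a direct comparison of the Gaussian densities yields
\begin{align*}
\xi^k_{1,\,1-e^{-T}}(\mb u+i\mb v)=e^{kT}\,\ga^k_T\bigl(e^{T/2}\mb u+i\,e^{T/2}\mb v\bigr).
\end{align*}
Equivalently, pulling $\ga^k_T$ back under the dilation $\mb z\mapsto e^{T/2}\mb z$ (whose real Jacobian in $\R^{2k}$ is $e^{kT}$) recovers $\xi^k_{1,\,1-e^{-T}}$.

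Next, for any $F\in\cal HL^2(\C^k,\xi^k_{1,\,1-e^{-T}})$, write $G:=e^{-\frac{T}{2}(a\pt_a)_k}F$, so $G(\mb z)=F(e^{-T/2}\mb z)$ by Def.~\ref{def:dilation}; the function $G$ is still entire holomorphic. Substituting $\mb w=e^{-T/2}\mb z$ in the integral defining $\|G\|^2_{\cal HL^2(\C^k,\ga^k_T)}$ and applying the identity above gives
\begin{align*}
\int_{\C^k}|F(e^{-T/2}\mb z)|^2\,\ga^k_T(\mb z)\,d^{2k}\mb z
=\int_{\C^k}|F(\mb w)|^2\,e^{kT}\ga^k_T(e^{T/2}\mb w)\,d^{2k}\mb w
=\int_{\C^k}|F(\mb w)|^2\,\xi^k_{1,\,1-e^{-T}}(\mb w)\,d^{2k}\mb w,
\end{align*}
so $e^{-\frac{T}{2}(a\pt_a)_k}$ is an isometry. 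Surjectivity follows because its inverse is the dilation $e^{\frac{T}{2}(a\pt_a)_k}$, which, by the same change-of-variable computation run backwards, maps $\cal HL^2(\C^k,\ga^k_T)$ isometrically into $\cal HL^2(\C^k,\xi^k_{1,\,1-e^{-T}})$.

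Composing the two unitaries yields the claim, and the displayed norm identity is then a direct rewriting of $\|f\|_{L^2(\R^k,\mu^k_1)}=\|\cal B^k_{1,\,1-e^{-T}}f\|_{\cal HL^2(\C^k,\xi^k_{1,\,1-e^{-T}})}=\|e^{-\frac{T}{2}(a\pt_a)_k}\cal B^k_{1,\,1-e^{-T}}f\|_{\cal HL^2(\C^k,\ga^k_T)}$, using the explicit formula \eqref{eqn:heat-real-transform} for $\cal B^k_{s,t}$. There is no real obstacle; the only thing to watch is bookkeeping of constants in the comparison of $\xi^k_{1,\,1-e^{-T}}$ with $\ga^k_T$ and of the Jacobian of the dilation in $2k$ real dimensions, which must combine so that the overall factor is exactly $1$.
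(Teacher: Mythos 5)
Your proposal is correct and follows essentially the same route as the paper: factor the map as the Euclidean transform $\cal B^k_{1,\,1-e^{-T}}$ (unitary onto $\cal HL^2(\C^k,\xi^k_{1,\,1-e^{-T}})$ by Thm.~\ref{thm:heat-real-general}) composed with the dilation $e^{-\frac{T}{2}(a\pt_a)_k}$, and identify $\ga^k_T$ with the dilated measure $\xi^k_{e^T,\,e^T-1}$. The only difference is that the paper invokes Lem.~\ref{lem:real-SB-with-dilation} for the unitarity of the dilation, while you verify that step directly by the change of variables $\mb w=e^{-T/2}\mb z$ with Jacobian $e^{kT}$ --- which is exactly the verification that lemma leaves implicit.
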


\begin{lem}
	For any holomorphic polynomial $q\in\cal H\cal P_k(\C)$ and $\lam\in\C$, we have 
	\begin{align*}
		e^{\lam (a\pt_a)_k}q=\sum_{n=0}^\infty \frac{\lam^n}{n!} (a\pt_a)^n_kq.
	\end{align*}
\end{lem}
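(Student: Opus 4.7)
The plan is to reduce the identity to the case of a single monomial by linearity and then invoke Euler's identity for homogeneous polynomials, exploiting the fact that $(a\pt_a)_k$ acts diagonally on the monomial basis.

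First I would note that $(a\pt_a)_k$ is a $\C$-linear operator on $\cal H\cal P_k(\C)$, and so is each of its powers $(a\pt_a)_k^n$; the dilation operator $q\mapsto q(e^{\lam}\,\cdot\,)$ from Def.~\ref{def:dilation} is also manifestly $\C$-linear. Writing a general $q\in\cal H\cal P_k(\C)$ as a finite sum $q=\sum_{\al}c_{\al}\mb a^{\al}$ of monomials $\mb a^{\al}=a_1^{\al_1}\cdots a_k^{\al_k}$, it therefore suffices to verify the claim when $q=\mb a^{\al}$.

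Next, the classical Euler identity applied to the homogeneous polynomial $\mb a^{\al}$ of total degree $|\al|=\al_1+\cdots+\al_k$ gives
\[
(a\pt_a)_k\,\mb a^{\al}=\sum_{j=1}^{k} a_j\frac{\pt}{\pt a_j}\mb a^{\al}=|\al|\,\mb a^{\al},
\]
and iterating $n$ times yields $(a\pt_a)_k^n\,\mb a^{\al}=|\al|^n\,\mb a^{\al}$. Summing the resulting scalar exponential series then gives
\[
\sum_{n=0}^{\infty}\frac{\lam^n}{n!}(a\pt_a)_k^n\,\mb a^{\al}=\left(\sum_{n=0}^{\infty}\frac{(\lam|\al|)^n}{n!}\right)\mb a^{\al}=e^{\lam|\al|}\,\mb a^{\al}.
\]
On the other side, Def.~\ref{def:dilation} yields
\[
\bigl(e^{\lam(a\pt_a)_k}\mb a^{\al}\bigr)(\mb a)=(e^{\lam}\mb a)^{\al}=\prod_{j=1}^{k}(e^{\lam}a_j)^{\al_j}=e^{\lam|\al|}\,\mb a^{\al},
\]
matching the previous display.

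There is essentially no real obstacle here: the lemma just says that the two natural meanings of ``exponential of $(a\pt_a)_k$''\,---\,direct dilation of the argument and the formal Taylor series\,---\,agree on an eigenbasis of the operator. The only point worth remarking on is that although the Taylor series is written as an infinite sum, on any fixed polynomial $q$ only finitely many total degrees $|\al|$ appear, so the series reduces to finitely many convergent scalar series and trivially outputs an element of $\cal H\cal P_k(\C)$ of the same total degree as $q$; no analytic convergence question arises.
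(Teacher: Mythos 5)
Your proof is correct and follows essentially the same route as the paper's: both decompose $q$ into homogeneous components (you go one step further, to monomials), use Euler's identity $(a\pt_a)_k q_m = m q_m$ to diagonalize the operator, and match the resulting scalar exponential $e^{\lam m}$ with the dilation $q_m(e^{\lam}\mb a)$. No gaps.
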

\begin{proof}
	If $q_m\in\cal H\cal P_k(\C)$ is a holomorphic \emph{homogeneous} polynomial of degree $m$, then we can check that $(a\pt_a)_k q_m=mq_m$, so 
	\begin{align*}
		\sum_{n=0}^\infty \frac{\lam^n}{n!} (a\pt_a)^n_k\,q_m(\mb a)= e^{\lam m}\,q_m(\mb a)=q_m(e^{\lam}\mb a)=e^{\lam(a\pt_a)_k}\,q_m(\mb a).
	\end{align*} 
	Since any holomorphic polynomial is a sum of holomorphic homogeneous polynomials, the equality above is true for all $q\in\cal H\cal P_k(\C)$.
\end{proof}

We can also verify the following.
\begin{lem}\label{lem:real-SB-with-dilation}
	For any $\lam$ \underline{\emph{real}}, the map 
	\begin{align*}
		e^{\lam (a \pt_a)_k}\colon \cal H L^2(\C^k,\xi^k_{s,t})&\to \cal H L^2(\C^k,\xi^k_{s',t'})\\
		f(\mb a)&\mapsto f(e^\lam \mb a)
	\end{align*}
	is a unitary map, where $s'=se^{-2\lam}$ and $t'=te^{-2\lam}$.
\end{lem}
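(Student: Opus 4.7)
The plan is to verify the lemma by a single direct change-of-variables computation; the statement really just says that uniform dilation by $e^{-\lambda}$ maps the Gaussian measure $\xi^k_{s',t'}$ to (a multiple of) $\xi^k_{s,t}$ with the right scaling of the variances. Since the map $f(\mathbf{a}) \mapsto f(e^\lambda \mathbf{a})$ obviously preserves holomorphy, the only nontrivial content is the isometry identity together with surjectivity.

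First I would expand the norm on the target side. Writing $g = e^{\lambda(a\pt_a)_k} f$ so that $g(\mathbf{u}+i\mathbf{v}) = f(e^\lambda \mathbf{u} + i e^\lambda \mathbf{v})$, I would substitute $\mathbf{u}' = e^\lambda \mathbf{u}$, $\mathbf{v}' = e^\lambda \mathbf{v}$ in the integral defining $\|g\|^2_{\xi^k_{s',t'}}$. The Jacobian contributes a factor $e^{-2k\lambda}$, while the arguments of the two Gaussian exponentials become
\begin{align*}
	-\frac{e^{-2\lambda}(\mathbf u')^2}{2s'-t'} - \frac{e^{-2\lambda}(\mathbf v')^2}{t'}.
\end{align*}
Substituting $s' = se^{-2\lambda}$ and $t' = te^{-2\lambda}$ gives $2s'-t' = e^{-2\lambda}(2s-t)$ and $t' = e^{-2\lambda} t$, so the $e^{-2\lambda}$ in the numerator cancels against the $e^{-2\lambda}$ in each denominator, leaving exactly the exponent of $\xi^k_{s,t}$. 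The normalization constants $(\pi(2s'-t'))^{-k/2}(\pi t')^{-k/2}$ produce an extra factor of $e^{2k\lambda}$, which is canceled precisely by the Jacobian factor $e^{-2k\lambda}$. Thus the transformed integrand is exactly $|f(\mathbf u'+i\mathbf v')|^2 \xi^k_{s,t}(\mathbf u'+i\mathbf v')$, giving $\|g\|^2_{\xi^k_{s',t'}} = \|f\|^2_{\xi^k_{s,t}}$ as required. I should also note that the admissibility constraint $0 < t < 2s$ is preserved under the uniform rescaling $(s,t) \mapsto (se^{-2\lambda}, te^{-2\lambda})$, so the target measure is well-defined.

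For surjectivity, the composition inverse of the dilation map is $g(\mathbf{a}) \mapsto g(e^{-\lambda}\mathbf{a}) = e^{-\lambda(a\pt_a)_k} g$, and the identical computation with $\lambda$ replaced by $-\lambda$ shows this inverse is an isometry from $\cal HL^2(\C^k, \xi^k_{s',t'})$ onto $\cal HL^2(\C^k, \xi^k_{s,t})$. Hence the original map is a unitary bijection. There is no real obstacle here; the only thing to watch carefully is the bookkeeping between the Jacobian, the normalizing constants, and the scaling of the exponents, all of which align cleanly precisely because $\xi^k_{s,t}$ factors as a product of Gaussians in $\mathbf u$ and $\mathbf v$ and the chosen rescaling of $s$ and $t$ is uniform.
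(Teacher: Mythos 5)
Your proof is correct: the change-of-variables computation, the cancellation between the Jacobian $e^{-2k\lambda}$ and the rescaled normalizing constants, the preservation of the constraint $0<t<2s$, and the surjectivity via the inverse dilation all check out. The paper states this lemma without proof (``We can also verify the following''), and your computation is precisely the routine verification it leaves to the reader.
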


\begin{proof}[Proof of Prop.~\ref{prop:isometry-quad}]
	With $s=1$, $t=1-e^{-T}$, and $\lam=-\frac{T}{2}$ in Lem.~\ref{lem:real-SB-with-dilation}, noting that $0<t<2s$, we have 
	\begin{align*}
		\xi^k_{s',t'}(\mb u+i\mb v)
		&= e^{-kT}(\pi(1+e^{-T}))^{-\frac k2}(\pi (1-e^{-T}))^{-\frac k2}e^{-\frac{e^{-T}\mb u^2}{1+e^{-T}}}e^{-\frac {e^{-T}\mb v^2}{1-e^{-T}}}
		=\ga^k_T(\mb u,\mb v).
	\end{align*}		
	The map $e^{-\frac{T}{2}(a\pt_a)_k}\cal B^{k}_{1,\ 1-e^{-T}}$ is unitary because it is the composition of the dilation $e^{-\frac{T}{2}(a\pt_a)_k}$ (Lem.~\ref{lem:real-SB-with-dilation}) and the Segal--Bargmann transform $\cal B^{k}_{1,\ 1-e^{-T}}$, which are both unitary. 
\end{proof}
Note that the proof also shows that $\ga^k_T$ is a special case of $\xi^k_{s,t}$.

Similar to the measure $\mu^\infty_t$, we can construct the probability measure $\xi^\infty_{s,t}$ which is the infinite product of the measure $\xi^1_{s,t}$ on $\C$. The measure of $\xi^\infty_{s,t}$ is compatible with $\xi^k_{s,t}$ for any $k\in\N^*$, in the sense that for any function $f$ of $2k$ complex variables $a_1,\dots,a_k,\ovl a_1,\dots,\ovl a_k$ in $L^1(\C^k,\xi^k_{s,t})$ considered also as a function on $\C^\infty$, then 
\begin{align*}
	\int_{\C^\infty} f\,d\xi^\infty_{s,t}=  \int_{\C^k} f\,d\xi^k_{s,t}.
\end{align*}
Moreover, with the same construction of the space $L^2(\R^\infty,\mu^\infty_t)$ in Sect.~\ref{sec:poly-sphere-meas}, we can show that there exists an inner product space $L^2(\C^\infty,\xi^\infty_{s,t})$ that extends $L^2(\C^k,\xi^k_{s,t})$ for all $k\in\N^*$, i.e, for any $F,G$ in $L^2(\C^k,\xi^k_{s,t})$,
\begin{align}\label{eqn:inn-prod-gam}
	\la F,G\ra_{L^2(\C^\infty,\xi^\infty_{s,t})}=\la F,G\ra_{L^2(\C^k,\xi^k_{s,t})}:=\int_{\C^k} F\ovl G\,d\xi^k_{s,t}.
\end{align} 
The same argument works if we replace $\xi^\infty_{s,t}$ by $\ga^\infty_T$.

\begin{defi}\label{def:HL-infinity}
	Define $\cal P_\infty(\R)$ and $\cal H\cal P_\infty(\C)$ as follows:
	\begin{align*}
		\cal P_\infty(\R):=\bigcup_{k=1}^\infty \cal P_k(\R),\quad \cal H\cal P_\infty(\C):=\bigcup_{k=1}^\infty \cal H\cal P_k(\C).
	\end{align*} 
	For any $s,t$ with $0<\frac{t}{2}<s<\infty$, we set the space $\cal H L^2(\C^\infty,\xi^\infty_{s,t})$ to be the \emph{closure} of $\cal H\cal P_\infty(\C)$ in the Hilbert space $L^2(\C^\infty,\xi^\infty_{s,t})$.
\end{defi}

\begin{rem} \label{rem:holomorphic}
	It is clear that the space $\cal HL^2(\C^\infty,\xi^\infty_{s,t})$ is a Hilbert subspace of $L^2(\C^\infty,\xi^\infty_{s,t})$. It may be more reasonable to define $\cal H L^2(\C^\infty,\xi^\infty_{s,t})$ as the closure of $\bigcup_{k=1}^\infty\cal H L^2(\C^k,\xi^k_{s,t})$ instead of the closure of $\bigcup_{k=1}^\infty\cal H\cal P_k(\C)$, but since holomorphic polynomials are dense in $\cal HL^2(\C^k,\xi^k_{s,t})$ for each $k$ (see, e.g. \cite[Thm 3.6]{DH99}), we actually do not obtain a new space.
\end{rem}
\begin{rem}
	Note that although the notation $\cal HL^2(\C^\infty,\xi^{\infty}_{s,t})$ has the prefix $\cal H$ which stands for ``holomorphic", a function $f$ in $\cal H L^2(\C^\infty,\ga^\infty_T)$ (of a complex sequence $\mb a\in \C^\infty$) may fail to be holomorphic in any practical sense. For example, with $\xi^k_{s,t}=\ga^k_T$, let us consider the series $f(\mb a)=\sum_{j=1}^\infty c_ja_j$ where $\sum_{j=1}^\infty |c_j|^2<1$ and $c_j\ne 0$ for all $j$. We see that
	\begin{align*}
		\int_{\C^2} a_k \bar a_l\ d\ga^2_T(a_k,a_l,\bar a_k,\bar a_l)=2e^T\del_{jk},
	\end{align*}
	which gives $\|f\|_{L^2(\C^\infty,\ga^\infty_T)}=2e^T\sum_{j=1}^\infty|c_j|^2<\infty$, so $f$ is an element of $L^2(\C^\infty,\ga^\infty_T)$. Also, $f$ is a limit point of the sequence $\{f_n=\sum_{j=1}^n c_ja_j\}_{n}$ in $\cal H\cal P_{\infty}(\C)$, so it is an element of $\cal HL^2(\C^\infty,\ga_T^\infty)$ by completeness. However, $f$ is not even defined at $(c_1^{-1},c_2^{-1},\dots,c_n^{-1},\dots)$.
\end{rem}

\begin{prop}\label{prop:infty_operator}
	For any $\lam\in\R$, there exist unique unitary maps $\cal B^\infty_{s,t}$ from $L^2(\R^\infty,\mu^\infty_s)$ onto $\cal HL^2(\C^\infty,\xi^\infty_{s,t})$ and $e^{\lam(a\pt_a)_\infty}$ from $\cal HL^2(\C^\infty,\xi^\infty_{s,t})$ onto $\cal HL^2(\C^\infty,\xi^\infty_{se^{-2\lam},te^{2-\lam}})$ such that for any $k\in \N^*$,
	\begin{align*}
		&\cal B^\infty_{s,t}\, f = \cal B_{s,t}^k\, f,\qquad \qquad\forall f\in L^2(\R^k,\mu^k_s),\\
		&e^{\lam(a\pt_a)_\infty}\, g = e^{\lam (a\pt_a)_k}\, g, \qquad \forall g\in \cal HL^2(\C^k,\xi^k_{s,t}).
	\end{align*}
\end{prop}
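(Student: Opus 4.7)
The plan is to construct both unitary maps as extensions by continuity of their finite-dimensional counterparts, exploiting the density of $M_s=\bigcup_{k\ge 1} L^2(\R^k,\mu^k_s)$ in $L^2(\R^\infty,\mu^\infty_s)$ established in Lemma \ref{lem:dense}, and the analogous density of $\cal H\cal P_\infty(\C)=\bigcup_k\cal H\cal P_k(\C)$ in $\cal HL^2(\C^\infty,\xi^\infty_{s,t})$ that is built into Definition \ref{def:HL-infinity}. Since the arguments for $\cal B^\infty_{s,t}$ and for $e^{\lambda(a\pt_a)_\infty}$ are structurally identical, I would write out the former in detail and only indicate what changes for the latter.

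First I would verify compatibility of the finite-dimensional transforms across dimensions. For $k\le l$ and $f\in L^2(\R^k,\mu^k_s)$ viewed on $\R^l$ via the identification in Remark \ref{rem:polynomials}(1), the product structure $\mu^l_t=\mu^k_t\otimes\mu^{l-k}_t$ together with Fubini applied to the convolution formula \eqref{eqn:heat-inhomogeneous} yields $\cal B^l_{s,t}f=\cal B^k_{s,t}f$, so the assignment $\cal B^\infty_{s,t}f:=\cal B^k_{s,t}f$ for $f\in L^2(\R^k,\mu^k_s)$ is a well-defined map on $M_s$. Its image lies in $\cal HL^2(\C^\infty,\xi^\infty_{s,t})$, because by Remark \ref{rem:holomorphic} holomorphic polynomials are dense in $\cal HL^2(\C^k,\xi^k_{s,t})$, so any element of the latter is a limit in $L^2(\C^k,\xi^k_{s,t})$ — hence (by the compatibility of $\xi^\infty_{s,t}$ with $\xi^k_{s,t}$) also in $L^2(\C^\infty,\xi^\infty_{s,t})$ — of elements of $\cal H\cal P_\infty(\C)$. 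Combining the compatibility of the infinite-product measures with their finite-dimensional marginals and the unitarity of $\cal B^k_{s,t}$ from Theorem \ref{thm:heat-real-general}, I get
\begin{align*}
\|\cal B^\infty_{s,t}f\|_{L^2(\C^\infty,\xi^\infty_{s,t})}=\|\cal B^k_{s,t}f\|_{L^2(\C^k,\xi^k_{s,t})}=\|f\|_{L^2(\R^k,\mu^k_s)}=\|f\|_{L^2(\R^\infty,\mu^\infty_s)},
\end{align*}
so $\cal B^\infty_{s,t}$ is an isometry on the dense subspace $M_s$ and therefore extends uniquely to an isometry on all of $L^2(\R^\infty,\mu^\infty_s)$.

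For surjectivity, the image of each $L^2(\R^k,\mu^k_s)$ under $\cal B^k_{s,t}$ is $\cal HL^2(\C^k,\xi^k_{s,t})$ by Theorem \ref{thm:heat-real-general}, which contains $\cal H\cal P_k(\C)$. Taking unions, the image of $M_s$ contains $\cal H\cal P_\infty(\C)$, which is dense in $\cal HL^2(\C^\infty,\xi^\infty_{s,t})$ by Definition \ref{def:HL-infinity}. Since an isometry of Hilbert spaces has closed range, the extension is onto, hence unitary; uniqueness is immediate from continuity plus the density of $M_s$. For the dilation operator I would run the same three-step scheme: compatibility across dimensions is trivial since $(e^{\lambda(a\pt_a)_l}g)(\mb a)=g(e^\lambda a_1,\ldots,e^\lambda a_l)=g(e^\lambda a_1,\ldots,e^\lambda a_k)$ when $g\in\cal H L^2(\C^k,\xi^k_{s,t})$, isometry on each level is Lemma \ref{lem:real-SB-with-dilation}, and surjectivity follows because holomorphic polynomials in finitely many variables are stable under dilation, hence dense in the target space $\cal HL^2(\C^\infty,\xi^\infty_{se^{-2\lambda},te^{-2\lambda}})$.

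The main obstacle I anticipate is not any individual step but rather checking cleanly that the embedding $\cal HL^2(\C^k,\xi^k_{s,t})\hookrightarrow \cal HL^2(\C^\infty,\xi^\infty_{s,t})$ (viewing a function of $k$ variables as a function of a sequence) is an isometric inclusion into the abstract closure defined in Definition \ref{def:HL-infinity}, rather than into some a priori larger space of ``holomorphic'' functions on $\C^\infty$. This is exactly what Remark \ref{rem:holomorphic} is designed to handle, and the compatibility \eqref{eqn:inn-prod-gam} of the infinite-product inner product with the finite-dimensional one makes the isometric nature of the inclusion automatic; once this is articulated, the rest of the proof is a standard extension-by-density argument.
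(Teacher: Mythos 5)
Your proposal is correct and follows essentially the same route as the paper: define the map on the dense union $M_s$ via the finite-dimensional transforms, invoke Lemma \ref{lem:dense}, Remark \ref{rem:holomorphic}, and the BLT theorem to extend isometrically, and get surjectivity from the closed range of an isometry together with density of $\cal H\cal P_\infty(\C)$. Your explicit verification that $\cal B^l_{s,t}f=\cal B^k_{s,t}f$ for $k\le l$ (via Fubini and the product structure of the Gaussian) is a well-definedness check the paper leaves implicit, and is a worthwhile addition.
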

\begin{proof}	
	Let us prove the existence of $\cal B^\infty_{s,t}$. Note that $\cal HL^2(\C^\infty,\ga^\infty_T)$ contains $\cal HL^2(\C^k,\ga^k_T)$ for all $k$ by Rem.~\ref{rem:holomorphic}. Hence, we can define a linear map $\cal L$ from $M_s$ to $\cal HL^2(\C^k,\xi^\infty_{s,t})$ by the rule $\cal L\,f = \cal B_{s,t}^k\,f$ for any $f\in L^2(\R^k,\mu^k_s)$. Since $M_s$ is dense in $L^2(\R^\infty, d\mu^\infty_s)$ (Lem.~\ref{lem:dense}), by the Bounded Linear Transformation (BLT) Theorem, there exists a unique bounded linear map $\cal L$ from the whole space $L^2(\R^\infty,\mu^\infty_s)$ to $\cal HL^2(\C^\infty,\xi^\infty_{s,t})$ such that $\cal B^\infty_{s,t}\big|_{M_s}=\cal L$. 
	
	Since the restriction of $\cal B^\infty_{s,t}$ to each $L^2(\R^k,\mu^k_s)$ is an isometry, it is an isometry on $M_s$. Consequently, the extended map $\cal B^\infty_{s,t}$ is an isometry from $\overline{M_s}=L^2(\R^\infty,\mu^\infty_s)$ (Lem.~\ref{lem:dense}) to $\cal HL^2(\C^\infty,\xi^\infty_{s,t})$. Since $L^2(\R^\infty,\mu^\infty_s)$ is a Hilbert space, the graph of $\cal B^\infty_{s,t}$  is closed and the image contains all $\cal HL^2(\C^k,\xi^k_{s,t})$ for all $k$, so map is surjective.
	
	The construction of the map $\cal T$ is similar with the sequence $\{\cal HL^2(\C^k,\xi^k_{s,t})\}_k$ replaced by $\{\cal HL^2(\C^k,\xi^k_{se^{-2\lam},te^{-2\lam}})\}_k$ and the sequence $\{ L^2(\R^k,\mu^k_s)\}_k$ replaced by $\{\cal HL^2(\C^k,\xi^k_{s,t})\}_k$.
\end{proof}	

We now have enough information to restate Point (1) of the Main Theorem \ref{thm:main}. Recall the Hermite operator $\mb H$ from Def.~\ref{def:Hermite-op}, we want to show the following theorem for the rest of the subsection.
\begin{thm}\label{thm:horizontal_arrow}
	For any $T>0$ and $k\in\N^*$, the following power series is convergent for all $p\in\cal P_k(\R)$:
	\begin{align}\label{eqn:power-series}
		e^{\frac{T}{2}\mb H}\,p:=\sum_{n=0}^{\infty}\frac{1}{n!}\left(\frac{T}{2}\mb H\right)^n\,p,
	\end{align}
	Define the linear map $\mb B_T\colon \cal P_\infty(\R) \to L^2(\C^\infty,\xi^\infty_{s,t})$ by
	\begin{align*}
		\mb B_T\,p:=\left(e^{\frac{T}{2}}\,p\right)_\C,\qquad p\in\cal P_\infty(\R).
	\end{align*}
	then $\mb B_T$ can be extended uniquely to a unitary map from $L^2(\R^\infty,\mu^\infty_1)$ onto $\cal HL^2(\C^\infty,\ga^\infty_T)$, and the extension agrees with the operator $e^{-\frac{T}{2}(a\pt_a)_\infty}\cal B^{\infty}_{1,\,1-e^{-T}}$.
\end{thm}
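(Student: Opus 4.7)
The plan is to establish an operator disentangling identity that expresses $e^{\frac{T}{2}\mb H}$ as a dilation composed with a Euclidean heat operator, then transfer everything to the complex side using Props.~\ref{prop:isometry-quad} and \ref{prop:infty_operator}. First I would verify the convergence of \eqref{eqn:power-series}. Recall from Rem.~\ref{rem:polynomials}(2) that $\mb H$ acts on $\cal P_k(\R)$ as $\Del_{\R^k}-(r\pt_r)_k$. Since $(r\pt_r)_k$ is diagonal on homogeneous components (with eigenvalue equal to the degree) and $\Del_{\R^k}$ strictly lowers degree by $2$, the operator $\mb H$ preserves each finite-dimensional subspace $\cal P^{\le d}_k(\R)$. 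Hence for any $p\in\cal P^{\le d}_k(\R)$, the series \eqref{eqn:power-series} reduces to a genuine matrix exponential applied to $p$ inside $\cal P^{\le d}_k(\R)$ and converges absolutely.

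For the disentangling identity, a direct calculation on monomials gives $[\Del_{\R^k},(r\pt_r)_k]=2\Del_{\R^k}$, equivalently $\ad_{-(r\pt_r)_k}\Del_{\R^k}=2\Del_{\R^k}$, from which
\begin{align*}
e^{-s(r\pt_r)_k}\,\Del_{\R^k}\,e^{s(r\pt_r)_k}=e^{2s}\Del_{\R^k}.
\end{align*}
Working on a fixed $\cal P^{\le d}_k(\R)$, set $U(s):=e^{s\mb H}$ and posit the ansatz $U(s)=e^{-s(r\pt_r)_k}\,e^{h(s)\Del_{\R^k}}$ with $h(0)=0$. Differentiating in $s$ and applying the adjoint formula above yields $U'(s)=\bigl(-(r\pt_r)_k+h'(s)e^{2s}\Del_{\R^k}\bigr)U(s)$; matching with the required $\mb H\,U=(\Del_{\R^k}-(r\pt_r)_k)U$ forces $h'(s)e^{2s}=1$, hence $h(s)=\frac12(1-e^{-2s})$. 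Setting $s=T/2$ gives the identity
\begin{align*}
e^{\frac{T}{2}\mb H}\big|_{\cal P_k(\R)}=e^{-\frac{T}{2}(r\pt_r)_k}\,e^{\frac{1-e^{-T}}{2}\Del_{\R^k}}.
\end{align*}

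Complexifying both sides by means of \eqref{eqn:commute} (which intertwines $e^{-\frac{T}{2}(r\pt_r)_k}$ with $e^{-\frac{T}{2}(a\pt_a)_k}$ under holomorphic extension), and noting that $0<1-e^{-T}<2$ so Thm.~\ref{thm:heat-real-general} identifies $\bigl(e^{\frac{1-e^{-T}}{2}\Del_{\R^k}}p\bigr)_\C=\cal B^k_{1,\,1-e^{-T}}\,p$, one obtains
\begin{align*}
\mb B_T\,p=e^{-\frac{T}{2}(a\pt_a)_k}\,\cal B^k_{1,\,1-e^{-T}}\,p,\qquad p\in\cal P_k(\R).
\end{align*}
By Prop.~\ref{prop:isometry-quad} the right-hand operator is unitary from $L^2(\R^k,\mu^k_1)$ onto $\cal HL^2(\C^k,\ga^k_T)$, and by Prop.~\ref{prop:infty_operator} it extends to a unitary $\cal U:=e^{-\frac{T}{2}(a\pt_a)_\infty}\,\cal B^\infty_{1,\,1-e^{-T}}$ from $L^2(\R^\infty,\mu^\infty_1)$ onto $\cal HL^2(\C^\infty,\ga^\infty_T)$. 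Since Hermite polynomials are dense in each $L^2(\R^k,\mu^k_1)$ and $M_1$ is dense in $L^2(\R^\infty,\mu^\infty_1)$ by Lem.~\ref{lem:dense}, the space $\cal P_\infty(\R)$ is dense in $L^2(\R^\infty,\mu^\infty_1)$; as $\cal U$ agrees with $\mb B_T$ on $\cal P_\infty(\R)$, the BLT theorem gives $\mb B_T$ a unique bounded extension which equals $\cal U$, and the theorem follows. The only non-routine step is the disentangling identity, but this is explicit because $\{\Del_{\R^k},(r\pt_r)_k\}$ closes on a two-dimensional solvable Lie subalgebra with $[\Del_{\R^k},(r\pt_r)_k]=2\Del_{\R^k}$.
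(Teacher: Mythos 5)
Your proposal is correct and follows essentially the same route as the paper: convergence via the matrix exponential on the finite-dimensional spaces $\cal P^{\le d}_k(\R)$, the disentangling identity $e^{\frac{T}{2}\mb H}=e^{-\frac{T}{2}(r\pt_r)_k}e^{\frac{1-e^{-T}}{2}\Del_{\R^k}}$, complexification via \eqref{eqn:commute}, and extension by density through Props.~\ref{prop:isometry-quad} and \ref{prop:infty_operator}. The only cosmetic difference is that you derive the disentangling identity from scratch by an ODE/ansatz argument using $[(r\pt_r)_k,\Del_{\R^k}]=-2\Del_{\R^k}$, whereas the paper obtains the same identity by citing the Baker--Campbell--Hausdorff-type Lem.~\ref{lem:exercise}.
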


\begin{lem}[{\cite[Chap. 5, Ex. 5]{Hall2015}}]\label{lem:exercise}
	Let $X,Y$ be two linear operators acting on a finite dimensional vector space. Suppose $[X,Y]=\al Y$ where $\al$ is not an integer multiple of $2\pi i$, then
	\begin{align*}
		e^{X}e^{Y}&=e^{X+\frac{\al}{1-e^{-\al}}Y},\\
		e^{Y}e^{X}&=e^{X-\frac{\al}{1-e^{\al}}Y},\textrm{ and}\\
		e^{X+Y}&=e^{X+\frac{1-e^{-\al}}{\al}Y}.
	\end{align*}
\end{lem}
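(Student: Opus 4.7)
The entire lemma reduces to one key commutator identity: $e^{sX}Y e^{-sX} = e^{\alpha s} Y$ for all $s \in \C$. I would prove this by setting $\phi(s) := e^{sX} Y e^{-sX}$, differentiating to get $\phi'(s) = e^{sX}[X,Y]e^{-sX} = \alpha\phi(s)$, and integrating the scalar ODE with initial value $\phi(0) = Y$. Rewritten, this says $e^{sX} Y = e^{\alpha s} Y e^{sX}$. The hypothesis that $[X,Y]$ is a \emph{scalar} multiple of $Y$ is essential here: it collapses the operator-valued ODE for $\phi$ into one with a closed-form exponential solution.

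For the first identity $e^X e^Y = e^{X + cY}$ with $c = \alpha/(1-e^{-\alpha})$, I would embed it in a one-parameter family and prove
\[
e^{t(X+cY)} = e^{tX} \exp\!\Bigl(c\,\tfrac{1-e^{-\alpha t}}{\alpha}\,Y\Bigr),\qquad t\in\C,
\]
then set $t = 1$. Writing the right-hand side as $A(t) = e^{tX}U(t)$ and taking the ansatz $U(t) = \exp(f(t) Y)$ (so that $U'(t) = f'(t) Y\, U(t)$, since $Y$ commutes with any power series in $Y$), direct differentiation combined with the commutator identity gives
\[
A'(t) = X\,A(t) + f'(t)\, e^{tX} Y\, U(t) = \bigl(X + f'(t) e^{\alpha t} Y\bigr) A(t).
\]
Matching against $(X + cY) A(t)$ forces $f'(t) = c e^{-\alpha t}$, so $f(t) = c(1-e^{-\alpha t})/\alpha$; the identity then follows by uniqueness of solutions to the linear matrix ODE $A' = (X + cY) A$, $A(0) = I$. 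Evaluating at $t = 1$ and choosing $c$ so that $c(1-e^{-\alpha})/\alpha = 1$ produces the first identity.

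The second identity follows by applying the first to the pair $(X',Y') := (-X,-Y)$, which satisfies $[X',Y'] = [X,Y] = \alpha Y = (-\alpha)(-Y) = \alpha'\,Y'$ with $\alpha' := -\alpha$. This yields $e^{-X} e^{-Y} = e^{-X + \alpha/(1-e^{\alpha}) Y}$, and inverting both sides (using $(AB)^{-1} = B^{-1}A^{-1}$) gives $e^Y e^X = e^{X - \alpha/(1-e^\alpha) Y}$. The third identity comes from specializing the one-parameter family at $c = 1$ and $t = 1$, which gives the corresponding product form $e^{X + Y} = e^X \exp\!\bigl(\tfrac{1-e^{-\alpha}}{\alpha} Y\bigr)$.

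The main technical point---and the only nontrivial one---is the ansatz step in the second paragraph. Recognizing that the ODE for $U$ can be solved by an exponential of the form $\exp(f(t)Y)$ is only possible because all iterated brackets $[X,[X,\ldots,[X,Y]\ldots]]$ collapse to scalar multiples of $Y$ and $[Y,Y]=0$; without the specific form $[X,Y]=\alpha Y$, the ODE for $U$ would couple into higher brackets and no such closed form would exist. Beyond this observation, the argument is routine bookkeeping with matrix ODEs in finite dimensions, where no convergence issues arise.
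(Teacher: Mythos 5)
Your proof is correct, and in fact the paper offers no proof of this lemma at all --- it is quoted verbatim as an exercise from Hall's textbook and used as a black box --- so your argument supplies something the paper leaves to the reader. The structure (conjugation identity $e^{sX}Ye^{-sX}=e^{\alpha s}Y$ via a scalar ODE, then the one-parameter family $e^{t(X+cY)}=e^{tX}\exp\bigl(c\,\tfrac{1-e^{-\alpha t}}{\alpha}Y\bigr)$ verified by matching linear matrix ODEs with the same initial condition, then specializing $c$) is sound, and your derivation of the second identity from the first via $(X,Y)\mapsto(-X,-Y)$ and inversion is clean; the hypothesis that $\alpha$ is not an integer multiple of $2\pi i$ is used exactly where it should be, to keep $1-e^{\pm\alpha}$ nonzero.

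One point worth making explicit: the third identity as printed in the lemma, $e^{X+Y}=e^{X+\frac{1-e^{-\alpha}}{\alpha}Y}$, is a typographical error --- taken literally it would force $\tfrac{1-e^{-\alpha}}{\alpha}=1$, as one sees from the $2\times 2$ example $X=\mathrm{diag}(\alpha/2,-\alpha/2)$, $Y$ the elementary nilpotent matrix. The intended statement is the product form $e^{X+Y}=e^{X}\,e^{\frac{1-e^{-\alpha}}{\alpha}Y}$, which is precisely how the identity is invoked later in the paper (e.g.\ to write $e^{\frac{T}{2}(\Delta_{\R^k}-(r\partial_r)_k)}=e^{-\frac{T}{2}(r\partial_r)_k}e^{\frac{1-e^{-T}}{2}\Delta_{\R^k}}$), and it is exactly what your $c=1$, $t=1$ specialization proves. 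So your proof establishes the version of the lemma that the paper actually needs; you might simply add a sentence noting the misprint so that a reader does not try to reconcile your conclusion with the displayed formula.
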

\begin{lem} \label{lem:commutative-relation}
	For any polynomial $q\in\cal P_k(\R)$, we have
	$[(r\pt_r)_k,\Del_{\R^k}]q=-2\Del_{\R^k}q$.
\end{lem}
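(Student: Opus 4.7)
The plan is to prove the commutator identity by a direct calculation at the level of differential operators, using linearity, the Leibniz rule for commutators $[A,BC]=[A,B]C+B[A,C]$ and $[AB,C]=A[B,C]+[A,C]B$, and the fundamental relations $[\pt_i,\pt_j]=0$ and $[\pt_j,x_i]=\del_{ij}$. Since the identity to be proved is purely algebraic in the differential operators, verifying it on the dense subspace of polynomials is equivalent to verifying it as an identity of operators on $C^2(\R^k)$, so no convergence issues arise.

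First I would expand
\begin{align*}
[(r\pt_r)_k,\Del_{\R^k}]=\sum_{i,j=1}^k\left[x_i\frac{\pt}{\pt x_i},\frac{\pt^2}{\pt x_j^2}\right],
\end{align*}
and reduce each summand with the Leibniz rule to
\begin{align*}
\left[x_i\frac{\pt}{\pt x_i},\frac{\pt^2}{\pt x_j^2}\right]=x_i\left[\frac{\pt}{\pt x_i},\frac{\pt^2}{\pt x_j^2}\right]+\left[x_i,\frac{\pt^2}{\pt x_j^2}\right]\frac{\pt}{\pt x_i}.
\end{align*}
The first term vanishes since partials commute. For the second term, a second application of the Leibniz rule together with $[\pt_j,x_i]=\del_{ij}$ gives
\begin{align*}
\left[\frac{\pt^2}{\pt x_j^2},x_i\right]=\frac{\pt}{\pt x_j}[\pt_j,x_i]+[\pt_j,x_i]\frac{\pt}{\pt x_j}=2\del_{ij}\frac{\pt}{\pt x_j},
\end{align*}
so $[x_i,\pt_j^2]\pt_i=-2\del_{ij}\pt_j\pt_i$.

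Finally, summing over $i$ and $j$ collapses the Kronecker delta and yields
\begin{align*}
[(r\pt_r)_k,\Del_{\R^k}]=-2\sum_{j=1}^k\frac{\pt^2}{\pt x_j^2}=-2\Del_{\R^k},
\end{align*}
which, applied to any $q\in\cal P_k(\R)$, gives the desired identity. There is no real obstacle here: the computation is a standard Heisenberg-type commutator calculation, and the only thing to keep straight is the bookkeeping of indices and signs when moving $x_i$ past $\pt_j^2$. The lemma will then feed into the next step, where the commutator relation together with Lem.~\ref{lem:exercise} (applied with $X=\frac{T}{2}\Del_{\R^k}$, $Y=-\frac{T}{2}(r\pt_r)_k$, so that $[X,Y]=TY$, hence $\al=T$) will be used to rewrite $e^{\frac{T}{2}\mb H}$ as the composition of a heat operator and a dilation.
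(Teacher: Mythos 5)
Your computation is correct, but it takes a genuinely different route from the paper. You prove the identity as an operator identity on all of $C^2(\R^k)$ by a direct Heisenberg-algebra calculation: expanding $[(r\pt_r)_k,\Del_{\R^k}]=\sum_{i,j}[x_i\pt_i,\pt_j^2]$, using the Leibniz rule for commutators and $[\pt_j,x_i]=\del_{ij}$, and collapsing the Kronecker delta. The paper instead exploits the Euler identity: a homogeneous polynomial $q_m$ of degree $m$ is an eigenvector of $(r\pt_r)_k$ with eigenvalue $m$, while $\Del_{\R^k}q_m$ is homogeneous of degree $m-2$, so $(r\pt_r)_k\Del_{\R^k}q_m-\Del_{\R^k}(r\pt_r)_kq_m=(m-2)\Del_{\R^k}q_m-m\Del_{\R^k}q_m=-2\Del_{\R^k}q_m$, and the general case follows by linearity. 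The paper's argument is shorter and ties directly into the eigenstructure that is used elsewhere (e.g.\ in the proof that $e^{\lam(a\pt_a)_k}$ is a dilation), whereas your argument is marginally more general in that it establishes the commutator on all $C^2$ functions rather than only on polynomials; for the purposes of this lemma the extra generality is not needed.

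One caution about your closing aside: the assignment $X=\frac{T}{2}\Del_{\R^k}$, $Y=-\frac{T}{2}(r\pt_r)_k$ does \emph{not} give $[X,Y]=TY$; it gives $[X,Y]=-\frac{T^2}{2}\Del_{\R^k}=-TX$, which is not of the form $\al Y$ required by Lem.~\ref{lem:exercise}. The roles must be reversed, as in the paper: take $X=-\frac{T}{2}(r\pt_r)_k$ and $Y=\frac{T}{2}\Del_{\R^k}$, so that $[X,Y]=\frac{T^2}{2}\Del_{\R^k}=TY$ and $\al=T$. This does not affect the validity of your proof of the lemma itself, but it would derail the subsequent application if carried forward as written.
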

\begin{proof}
	We need only to verify this for homogeneous polynomials. If $q_m$ is a homogeneous polynomial of degree $m$, then $(r \pt_r)_k\,q_m= mq_m$, while the degree of $\Del_{\R^k} q_m$ is two less than that of $q_m$, so that 
	\begin{align*}
		(r\pt_r)_k\, \Del_{\R^k}\, q_m-\Del_{\R^k}\, (r\pt_r)_k\, q_m=(m-2)\Del_{\R^k}\, q_m-\Del_{\R^k}\, mq_m=-2\Del_{\R^k}\, q_m.
	\end{align*}
	Hence, $[(r \pt_r)_k,\Del_{\R^k}]=-2\Del_{\R^k}$.
\end{proof}

\begin{proof}[Proof of Thm.~\ref{thm:horizontal_arrow}]
	Let $l$ be the degree of $p$. We can consider $\Del_{\R^k}$, $(r\pt_r)_k$, and $\mb H$ as bounded linear operator on the finite-dimensional vector space $\cal P^{\le l}_k(\R)$, so $\mb H$ acting on $\cal P^{\le l}_k(\R)$ has a matrix representation. Hence, for any $T>0$, the power series defining $e^{\frac{T}{2}\mb H}$ converges. In particular, $e^{\frac{T}{2}\mb H}p$ is convergent and is an element of $\mathcal P^{\le l}_k(\R)$.
	
	To compute $e^{\frac{T}{2}\mb H}$, still considering the action of bounded operators on the vector space $\cal P^{\le l}_k(\R)$, we use the third identity of Lem.~\ref{lem:exercise} with $X=-\frac{T}{2}(r\pt_r)_k$, $Y=\frac{T}{2}\Del_{\R^k}$, $\al=T$ (which comes from Lem.~\ref{lem:commutative-relation}) to obtain
	\begin{align*}
		e^{\frac{T}{2}(\Del_{\R^k}-(r\pt_r)_k)}\,p= e^{-\frac{T}{2}(r\pt_r)_k}e^{\frac{1-e^{-T}}{2}\Del_{\R^k}}\,p.
	\end{align*} 
	Note that by Thm.~\ref{thm:heat-powerseries}, $e^{\frac{1-e^{-T}}{2}\Del_{\R^k}}\,p$ is a polynomial so it can be holomorphically extended to the whole $\C^k$, and the remark before Def.~\ref{def:dilation} implies that $(e^{\lam (r\pt_r)_k}f)_\C=e^{\lam (a\pt_a)_k}f_\C$ if $f$ has an entire holomorphic extension. Therefore,
	\begin{align*}
		\mb B_T\,p=\left(e^{\frac{T}{2}\mb H}\,p\right)_\C&=\left( e^{-\frac{T}{2}(r\pt_r)_k}e^{\frac{1-e^{-T}}{2}}\,p\right)_\C 
		= e^{-\frac{T}{2}(a\pt_a)_k} \left(e^{\frac{1-e^{-T}}{2}\Del_{\R^k}}\,p\right)_\C
		= e^{-\frac{T}{2}(a\pt_a)_k} \cal B^{k}_{1,\,1-e^{-T}}\,p.
	\end{align*}
	The last expression in the equality above is known to be independent of the degree of $p$, so the first statement of the theorem is true for all $p\in\cal P_k(\C)$.
	
	Finally, since the set of polynomial is dense in $L^2(\R^\infty,\mu^\infty_1)$, the uniqueness of extension in Bounded Linear Transformation Theorem implies that $\mb B_T$ and $e^{-\frac{T}{2}(a\pt_a)_\infty}\cal B^{\infty}_{1,\,1-e^{-T}}$ agree on the whole $L^2(\R^\infty,\mu^\infty_1)$.
\end{proof}

\subsection{Large-$N$ limit of quadric measures}\label{sec:limit-quad-measure}
In this subsection, whenever we fix $b=\sqrt N$, we will use the notation $\nu^{N-1}_T$ in place of $\nu^{N-1}_{\sqrt N,T}$. Recall the definition of $Q^{N-1}(b)$ in \eqref{eqn:quadric-def}. The goal of this subsection is to prove the following theorem.
\begin{thm}\label{thm:limitquad}
	Suppose $q\in\cal P_k(\C)$, viewed also as a function on $\C^N$ for $N>k$. Then 
	\begin{align*}
		\lim_{N\to\infty} \int_{Q^{N-1}(\sqrt N)} q\,d\nu^{N-1}_{T}= \int_{\C^\infty} q\,d\ga^\infty_T.
	\end{align*}
	In particular, for $q_1,q_2\in \cal P_k(\C)$, we have
	\begin{align*}
		\lim_{N\to\infty} 
		\la q_1,q_2\ra_{L^2(Q^{N-1}(\sqrt{N}),\nu^{N-1}_{T})}
		=\la q_1,q_2\ra_{L^2(\C^\infty,\ga^\infty_k)}.
	\end{align*}
\end{thm}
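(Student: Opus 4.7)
The plan is to use the unitarity of both Segal--Bargmann transforms $\cal C^{N-1}_{\sqrt N,T}$ (Thm.~\ref{thm:SB-sphere}) and $\mb B_T$ (Thm.~\ref{thm:horizontal_arrow}) to reduce convergence of quadric integrals to the already-established convergence of spherical inner products (Thm.~\ref{thm:limitsph}). Every monomial in $a_1,\dots,a_k,\bar a_1,\dots,\bar a_k$ factors as $F_1\ovl{F_2}$ with $F_1,F_2\in\cal H\cal P_k(\C)$, so by $\C$-linearity together with $\int_{\C^\infty} q\,d\ga^\infty_T=\int_{\C^k} q\,d\ga^k_T$ for $q\in\cal P_k(\C)$, it suffices to show
\begin{align*}
\lim_{N\to\infty}\la F_1,F_2\ra_{L^2(Q^{N-1}(\sqrt N),\nu^{N-1}_T)}=\la F_1,F_2\ra_{L^2(\C^k,\ga^k_T)}
\end{align*}
for all $F_1,F_2\in\cal H\cal P_k(\C)$.

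The next step is to invert the two Segal--Bargmann transforms on holomorphic polynomials. By Lem.~\ref{lem:maps-of-sph-lap} and \eqref{eqn:sph-Lap-poly}, $\Del_{S^{N-1}(\sqrt N)}$ preserves the finite-dimensional space $\cal P^{\le l}_k(\R)$ whenever $N>k$; by Prop.~\ref{prop:heat-powerseries-sphere} the Segal--Bargmann transform acts on it as $e^{\frac{T}{2}\Del_{S^{N-1}(\sqrt N)}}$ followed by holomorphic extension, and the former is invertible on this finite-dimensional space. The operator $\mb H$ preserves $\cal P^{\le l}_k(\R)$ analogously. For $F\in\cal H\cal P^{\le l}_k(\C)$, let $F|_{\R^k}\in\cal P^{\le l}_k(\R)$ denote the polynomial obtained by restricting $F$ to real arguments, and set
\begin{align*}
p^N:=e^{-\frac{T}{2}\Del_{S^{N-1}(\sqrt N)}}F|_{\R^k}, \qquad p^\infty:=e^{-\frac{T}{2}\mb H}F|_{\R^k},
\end{align*}
both in $\cal P^{\le l}_k(\R)$. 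Applying $e^{\frac{T}{2}\Del_{S^{N-1}(\sqrt N)}}$ (resp.\ $e^{\frac{T}{2}\mb H}$) followed by holomorphic extension to $p^N$ (resp.\ $p^\infty$) recovers the polynomial $F$, so $\cal C^{N-1}_{\sqrt N,T}p^N=F$ on $Q^{N-1}(\sqrt N)$ and $\mb B_T p^\infty=F$ on $\C^k$; the required uniqueness uses that the projection $Q^{N-1}(\sqrt N)\to\C^k$ onto the first $k$ coordinates is surjective for $N>k$, so a polynomial in $a_1,\dots,a_k$ is determined by its values on $Q^{N-1}(\sqrt N)$.

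Invoking the unitarity of $\cal C^{N-1}_{\sqrt N,T}$ and $\mb B_T$ then gives
\begin{align*}
\la F_1,F_2\ra_{L^2(Q^{N-1}(\sqrt N),\nu^{N-1}_T)}&=\la p^N_1,p^N_2\ra_{L^2(S^{N-1}(\sqrt N),\ovl\sig^{N-1})},\\
\la F_1,F_2\ra_{L^2(\C^k,\ga^k_T)}&=\la p^\infty_1,p^\infty_2\ra_{L^2(\R^\infty,\mu^\infty_1)}.
\end{align*}
By Prop.~\ref{prop:Sen-Hermite}, $\Del_{S^{N-1}(\sqrt N)}\to\mb H$ on $\cal P^{\le l}_k(\R)$, and the exponential is continuous in operator norm on this finite-dimensional space, so $p^N_i\to p^\infty_i$ coefficient-wise in the monomial basis. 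Expanding the two inner products in that basis and applying Thm.~\ref{thm:limitsph} to each pair of monomials then produces the required limit. The main obstacle is essentially bookkeeping: making precise the polynomial inversion of the heat semigroup on each finite-dimensional $\cal P^{\le l}_k(\R)$, and combining the finitely many coefficient convergences of $p^N_i\to p^\infty_i$ with the monomial-inner-product convergences supplied by Thm.~\ref{thm:limitsph}. No new estimates are needed beyond those already recorded in the excerpt.
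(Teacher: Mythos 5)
Your argument is correct, but it is a genuinely different proof from the one in the paper. You derive the convergence of the quadric measures as a \emph{corollary} of unitarity: you factor each monomial $a^\al\bar a^\be$ as $F_1\ovl{F_2}$ with $F_1,F_2$ holomorphic, pull $F_i$ back through $\cal C^{N-1}_{T}$ and $\mb B_T$ by inverting the exponentials $e^{\frac T2\Del_{S^{N-1}(\sqrt N)}}$ and $e^{\frac T2\mb H}$ on the finite-dimensional space $\cal P^{\le l}_k(\R)$ (legitimate, since Lem.~\ref{lem:maps-of-sph-lap} and Prop.~\ref{prop:heat-powerseries-sphere} let you treat these as matrix exponentials there), and then combine the coefficient-wise convergence $p^N_i\to p^\infty_i$ from Prop.~\ref{prop:Sen-Hermite} with the spherical inner-product convergence of Thm.~\ref{thm:limitsph}. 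The paper instead proves the statement directly and without invoking either unitarity theorem: it establishes an integration-by-parts identity on the quadric for the operator $\Ga_N=\frac12(J^2_{\mb a,N}+J^2_{\ovl{\mb a},N})$ against the hyperbolic heat kernel (Props.~\ref{prop:symmetric-op} and \ref{prop:int-by-part-hyperbolic}, which require the Davies estimates and a cutoff argument), converts the quadric integral into a spherical integral of $e^{\frac{T}{b^2}\Ga_N}q$ (Prop.~\ref{prop:heat-integral}), takes the limit $\frac1N\Ga_N\to G_\infty$, and identifies the limiting Gaussian $\ga^k_T$ by explicit Baker--Campbell--Hausdorff manipulations. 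Your route is shorter and avoids all heat-kernel estimates, but it leans on the full strength of Thm.~\ref{thm:SB-sphere} and Thm.~\ref{thm:horizontal_arrow} (the latter is proved before this theorem in the paper, so there is no circularity), and it only identifies the limit as $\ga^\infty_T$ because that Gaussian was already computed in Prop.~\ref{prop:isometry-quad}; the paper's computation re-derives $\ga^k_T$ independently, which serves as a consistency check that your approach forgoes. Two small points to make explicit if you write this up: that the holomorphic extension of $F|_{\R^k}$ from the sphere to the quadric is $F$ itself (uniqueness of holomorphic extension, or your surjectivity argument), and that the double limit in $\la p^N_1,p^N_2\ra_{L^2(S^{N-1}(\sqrt N),\ovl\sig^{N-1})}$ is handled by the finiteness of the monomial expansion.
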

Note that in Thm.~\ref{thm:limitquad}, the result does not involve any assumption on the holomorphicity of the polynomials in $\cal P_k(\C)$.

Let us establish several results for general $b>0$ before specifying $b=\sqrt N$. Recall from Sect.~\ref{sec:SB-sphere} the measure $dh^{N-1}_b(\mb x,\mb p):=2^{N-1}\be^{N-1}_b(2p)\,d\mb p\,d\ovl{\sig}^{N-1}_b(\mb x)$ on the quadric $Q^{N-1}$(b). 

\begin{defi} We define the differential operator 
\begin{align}
	\Ga_N:=\frac{1}{2}(J^2_{\mb a,N}+J^2_{\ovl{\mb a},N})\label{eqn:Ga_N}
\end{align} 
where $J^2_{\mb a,N}$ and $J^2_{\ovl{\mb a},N}$ are given in Prop.~\ref{prop:Gamma-p-function}.
\end{defi}

We have the following first observation about $\Ga_N$.
\begin{prop}[Integration by parts with $\Ga_N$] \label{prop:symmetric-op}
	If $f\in C^2 (Q^{N-1}(b))$ and $g$ is smooth and compactly supported on $Q^{N-1}(b)$, then 
	\begin{align*}
		\int_{Q^{N-1}(b)}\left(\Ga_N f\right)\,g\, dh^{N-1}_b
		=\int_{Q^{N-1}(b)} f\,\left(\Ga_N g\right) \, dh^{N-1}_b 
	\end{align*}
\end{prop}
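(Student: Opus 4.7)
The plan is to deduce the formal symmetry of $\Gamma_N$ from the $\SO(N,\C)$-invariance of $dh^{N-1}_b$ established in Prop.~\ref{prop:invariant-action-quadric}, applied to the complex one-parameter subgroups $\zeta \mapsto R_{kl}(\zeta)$ that generate $J_{kl}$ and $\ovl{J_{kl}}$. Concretely, for each $\zeta\in\C$, invariance of the measure under $R_{kl}(\zeta)\in \SO(N,\C)$ together with a change of variables yields the identity
\begin{align*}
    \int_{Q^{N-1}(b)} (R_{kl}(\zeta)f)\,g\,dh^{N-1}_b = \int_{Q^{N-1}(b)} f\,(R_{kl}(-\zeta)g)\,dh^{N-1}_b,
\end{align*}
valid for any $f\in C^2(Q^{N-1}(b))$ and any smooth compactly supported $g$, with both sides real-analytic in $\zeta,\bar\zeta$ near $0$.

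The next step is to differentiate this identity at $\zeta=0$. By \eqref{eqn:rot_generator}, the Wirtinger derivative $\pt/\pt\zeta|_{\zeta=0}$ applied to $R_{kl}(\zeta)$ produces the holomorphic generator $J_{kl}$, while $\pt/\pt\bar\zeta|_{\zeta=0}$ produces $\ovl{J_{kl}}$. Since $g$ has compact support on $Q^{N-1}(b)$ and $R_{kl}(\zeta)\mathrm{supp}(g)$ stays inside a compact set for $\zeta$ in a neighborhood of $0$, differentiation under the integral sign is justified by dominated convergence (using the $C^2$-bounds of $f$ on that compact set). Applying $\pt/\pt\zeta$ and $\pt/\pt\bar\zeta$ separately gives
\begin{align*}
    \int (J_{kl}f)\,g\,dh^{N-1}_b = -\int f\,(J_{kl}g)\,dh^{N-1}_b,\qquad
    \int (\ovl{J_{kl}}f)\,g\,dh^{N-1}_b = -\int f\,(\ovl{J_{kl}}g)\,dh^{N-1}_b,
\end{align*}
so each of $J_{kl}$ and $\ovl{J_{kl}}$ is formally skew-symmetric with respect to the bilinear pairing induced by $dh^{N-1}_b$.

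Iterating skew-symmetry twice (applied first with $J_{kl}f$ in place of $f$, which is still $C^1$ and paired against the smooth compactly supported $g$, and then with $f$ against $J_{kl}g$, which is again smooth and compactly supported) yields
\begin{align*}
    \int (J_{kl}^2 f)\,g\,dh^{N-1}_b = -\int (J_{kl}f)(J_{kl}g)\,dh^{N-1}_b = \int f\,(J_{kl}^2 g)\,dh^{N-1}_b,
\end{align*}
and similarly for $\ovl{J_{kl}}^2$. Summing over $1\le k<l\le N$ with the sign convention of Sect.~\ref{sec:SB-sphere} shows that $J^2_{\mb a, N} = -\sum J_{kl}^2$ and $J^2_{\ovl{\mb a},N} = -\sum \ovl{J_{kl}}^2$ are each symmetric, and hence so is $\Gamma_N = \tfrac12(J^2_{\mb a,N} + J^2_{\ovl{\mb a},N})$.

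The main obstacle, modest as it is, lies in justifying the interchange of differentiation and integration with a complex parameter on the non-compact quadric: this is precisely why the hypothesis that $g$ be compactly supported is used, as it allows us to reduce the regularity requirements on $f$ to $C^2$ and to obtain uniform domination of the integrand in $\zeta$. An alternative route would be to compute $J^2_{\mb a,N}$ and $J^2_{\ovl{\mb a},N}$ explicitly in the coordinates $(\mb x,\mb p)$ on $TS^{N-1}(b)\cong Q^{N-1}(b)$ via Prop.~\ref{prop:quadric-diffeomorphic-cotang} and integrate by parts directly against the product measure $\be^{N-1}_b(2p)\,d\mb p\,d\ovl\sig^{N-1}_b(\mb x)$, but the symmetry-based approach above avoids the bookkeeping.
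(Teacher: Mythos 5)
Your argument is correct and follows essentially the same route as the paper: both proofs exploit the $\SO(N,\C)$-invariance of $dh^{N-1}_b$ from Prop.~\ref{prop:invariant-action-quadric}, differentiate the resulting integral identity at $\zeta=0$ to obtain formal skew-symmetry of the generators $J_{kl}$ (and their conjugates), and then square and sum to get symmetry of $\Ga_N$. The only cosmetic differences are that the paper differentiates $\int (R_{kl}(\zeta)f)(R_{kl}(\zeta)g)\,dh^{N-1}_b$ directly via the Leibniz rule and obtains the $\ovl{J_{kl}}$ statement by conjugating the $J_{kl}$ identity applied to $\bar f,\bar g$, rather than using $\pt/\pt\bar\zeta$ as you do.
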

\begin{proof}
	We use the notion of complex rotation $R_{kl}(\zeta)$ in \eqref{eqn:rot_generator}. The measure $dh^{N-1}_b$ is invariant under the action of $\SO(N,\C)$ by Prop.~\ref{prop:invariant-action-quadric}, so 
	\begin{align*}
		&\int_{Q^{N-1}(b)}f(R^{-1}_{kl}(\zeta)\mb a,\ovl{R^{-1}_{kl}(\zeta)\mb a}) \,g(R^{-1}_{kl}(\zeta)\mb a,\ovl{R^{-1}_{kl}(\zeta)\mb a}) \, dh^{N-1}_b(\mb a,\ovl{\mb a})\\ 
		&=\int_{Q^{N-1}(b)}f(\mb a,\ovl{\mb a}) \,g(\mb a,\ovl{\mb a}) \, dh^{N-1}_b(R_{kl}(\zeta)\mb a,\ovl{R_{kl}(\zeta)\mb a})
		=\int_{Q^{N-1}(b)}f(\mb a,\ovl{\mb a}) \,g(\mb a,\ovl{\mb a}) \, dh^{N-1}_b(\mb a,\ovl{\mb a}) <\infty.
	\end{align*} 
	Taking partial derivative $\pt/\pt\zeta$ under the integral sign and letting $\zeta$ go to $0$, we have 
	\begin{align*}
		&\int_{Q^{N-1}(b)}\left(J_{kl} f\right)\,g\, dh^{N-1}_b
		+\int_{Q^{N-1}(b)} f\,\left(J_{kl} g\right) \, dh^{N-1}_b=0,
	\end{align*}
	where $J_{kl}=a_l\frac{\pt}{\pt a_k}-a_k\frac{\pt}{\pt a_l}$. Since $J^2_{\mb a,N}=-\sum_{k<l} J_{kl}^2$, we have 
	\begin{align}
		\int_{Q^{N-1}(b)}\left(J^2_{\mb a,N} f\right)\,g\, dh^{N-1}_b
		=\int_{Q^{N-1}(b)} f\,\left(J^2_{\mb a,N} g\right) \, dh^{N-1}_b.\label{eqn:symmetric-1}
	\end{align}
	Applying \eqref{eqn:symmetric-1} for $\bar f$ and $\bar g$ then taking the complex conjugation, we have 
	\begin{align}
		\int_{Q^{N-1}(b)}\left(J^2_{\ovl{\mb a},N} f\right)\,g\, dh^{N-1}_b
		=\int_{Q^{N-1}(b)} f\,\left(J^2_{\ovl{\mb a},N} g\right) \, dh^{N-1}_b.\label{eqn:symmetric-2}
	\end{align}
	Adding \eqref{eqn:symmetric-1} and \eqref{eqn:symmetric-2} gives us the conclusion.
\end{proof}

The next observation is that we can extend the integration by parts formula to the heat kernel $g=\theta^{N-1}_{b}$, even though it is not compactly supported on $Q^{N-1}(b)$. Recall that $\cal P_k(\C)$ denotes the set of (complex) polynomials depending on $2k$ variables $(\mb a,\ovl {\mb a})=(a_1,\dots,a_k,\bar a_1,\dots,\bar a_k)$.
\begin{prop}\label{prop:int-by-part-hyperbolic}
	For any polynomial $q(\mb a,\ovl{\mb a})\in \cal P_k(\C)$, we associate the function 
	\begin{align*}
		f_q(\mb x,\mb p):=q(\mb a(\mb x,\mb p),\ovl{\mb a}(\mb x,\mb p)).
	\end{align*} 
	Then,
	\begin{align*}
		\int_{\mb x\in S^{N-1}(b)}&\int_{T_{\mb x}S^{N-1}(b)}(\Ga_N f_q)(\mb x,\mb p)\,\theta^{N-1}_{b}(2T,2p)\,dh^{N-1}_b(\mb x, \mb p)\\
		&=\int_{\mb x\in S^{N-1}(b)}\int_{T_{\mb x}S^{N-1}(b)} f_q(\mb x,\mb p)\,
		\left(\Ga_N \theta^{N-1}_{b}\right)(2T,2p)\,dh^{N-1}_b(\mb x, \mb p).
	\end{align*} 
\end{prop}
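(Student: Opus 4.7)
The strategy is a cutoff-and-limit argument that reduces the claim to Prop.~\ref{prop:symmetric-op}. Pick $\chi\in C_c^\infty([0,\infty))$ with $\chi\equiv 1$ on $[0,1]$ and $\chi\equiv 0$ on $[2,\infty)$, set $\chi_R(\mb p):=\chi(|\mb p|/R)$, and define
\[
g_R(\mb x,\mb p):=\chi_R(\mb p)\,\theta^{N-1}_{b}(2T,2p).
\]
Each $g_R$ is smooth and compactly supported on $Q^{N-1}(b)$ (identified with $TS^{N-1}(b)$ via Prop.~\ref{prop:quadric-diffeomorphic-cotang}), so Prop.~\ref{prop:symmetric-op} applies to $(f_q,g_R)$ and yields
\[
\int (\Ga_N f_q)\,g_R\,dh^{N-1}_b \;=\; \int f_q\,(\Ga_N g_R)\,dh^{N-1}_b.
\]
The plan is to let $R\to\infty$ on both sides and justify the exchange of limit and integral via dominated convergence.

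The a priori bound I would first establish is that on $Q^{N-1}(b)$ one has $|\mb a|^2=b^2\cosh(2p/b)$, so any polynomial $q\in\cal P_k(\C)$, together with $J_{kl}q$ and $J_{kl}^2 q$, grows at worst like $C_q\,e^{d p/b}$ where $d=\deg q$. On the other hand, Prop.~\ref{prop:estimate-heat-hyperbolic} yields $\theta^{N-1}_b(2T,2p)\le c_{N,T}\,P(p)\,e^{-p^2/T}$ for a polynomial $P$, and from Prop.~\ref{prop:Gamma-p-function} combined with the hyperbolic heat equation one has
\[
\Ga_N\theta^{N-1}_b(2T,2p)=2b^2\,\pt_s\theta^{N-1}_b(s,2p)\big|_{s=2T},
\]
which obeys the same kind of Gaussian bound, either by differentiating the Davies estimate in $s$ or by a standard gradient estimate for the hyperbolic heat kernel. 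Together with the exponential density $\be^{N-1}_b(2p)\sim e^{2(N-2)p/b}/p^{N-2}$, the Gaussian factor $e^{-p^2/T}$ absorbs every exponential-in-$p$ growth that appears, so both $(\Ga_N f_q)\theta^{N-1}_b$ and $f_q(\Ga_N\theta^{N-1}_b)$ lie in $L^1(dh^{N-1}_b)$.

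With these bounds in hand, the left-hand side for $g_R$ converges to $\int(\Ga_N f_q)\,\theta^{N-1}_b\,dh^{N-1}_b$ by dominated convergence, since $|(\Ga_N f_q)g_R|\le|\Ga_N f_q|\,\theta^{N-1}_b$ pointwise. For the right-hand side, decompose
\[
\Ga_N g_R=\chi_R\,\Ga_N\theta^{N-1}_b+[\Ga_N,\chi_R]\,\theta^{N-1}_b,
\]
where the first term converges to $\Ga_N\theta^{N-1}_b$ pointwise under the integrable majorant above. The commutator is at most first-order in $\theta^{N-1}_b$ with coefficients built from $\chi_R'=O(R^{-1})$ and $\chi_R''=O(R^{-2})$ times the polynomial-in-$\mb a$ coefficients of $J_{kl}$ and $J_{kl}^2$; it is supported in the annulus $R\le p\le 2R$, pointwise dominated by a fixed $L^1$ function, and tends to $0$ pointwise, so its contribution vanishes in the limit.

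The main obstacle I anticipate is the bookkeeping of exponential factors: the coefficients $a_j,\bar a_j$ of $J_{kl}$ are each of size $O(e^{p/b})$, and $\be^{N-1}_b(2p)$ is itself exponentially growing, so one must verify that the Gaussian decay $e^{-p^2/T}$ of $\theta^{N-1}_b$ and $\pt_s\theta^{N-1}_b$ is genuinely strong enough to absorb them uniformly in $R$. Producing a single clean majorant of the form $C(q,N,T)\,\tilde P(p)\,e^{-p^2/(2T)}$ that dominates every integrand encountered is the only nontrivial step; once it is in place, the three applications of dominated convergence assemble into the desired identity.
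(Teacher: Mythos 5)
Your proposal is correct and follows essentially the same route as the paper: cut off at radius $R$, apply Prop.~\ref{prop:symmetric-op} to the compactly supported product, expand $\Ga_N(\chi_R\,\theta^{N-1}_b)$ by the product rule, and kill the error terms by playing the exponential growth of $f_q$, of the coefficients of $J_{kl}$, and of $\be^{N-1}_b$ against the Gaussian decay $e^{-p^2/T}$ from Prop.~\ref{prop:estimate-heat-hyperbolic}. The only substantive difference is how the first-order (cross) term involving $\pt_p\theta^{N-1}_b$ is controlled: you propose either ``differentiating the Davies estimate in $s$'' --- which is not legitimate, since a two-sided bound cannot be differentiated --- or a gradient estimate for the hyperbolic heat kernel, which would work; the paper instead uses the recurrence relation of Lem.~\ref{lem:recur-relation} to write $\pt_r\theta^{N-1}_b$ as $\sinh(r/b)$ times $\theta^{N+1}_b$ and then applies the Davies bound in dimension $N+1$, which is both elementary and self-contained. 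Replace your first justification with either of these and the argument is complete; your dilation-type cutoff (derivatives $O(R^{-1})$, $O(R^{-2})$) versus the paper's translation-type cutoff (derivatives uniformly bounded) is immaterial.
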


\begin{lem}[Recurrence relation of the hyperbolic heat kernel {\cite[Sect.~5.7.]{Davies}}]\label{lem:recur-relation}
	For any $d\ge 1$, $r\ge0$, $t>0$, we have 
	\begin{align*}
		\theta^{d+2}_{1}(s,r) = -\frac{e^{-\frac{ds}{2}}}{2\pi \sinh r}\frac{\pt}{\pt r} \theta^d_1 (s,r).
	\end{align*}	
\end{lem}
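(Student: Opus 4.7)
My plan is to prove this by the standard ``ladder'' approach on the hyperbolic heat kernels: show that the right-hand side satisfies the heat equation on $H^{d+2}(1)$ with the correct initial concentration at the origin, and then invoke uniqueness of the fundamental solution. Set $u(s,r):=\theta^d_1(s,r)$ and write the putative identity as $v(s,r)=e^{-ds/2}(Du)(s,r)$, where $D:=-\frac{1}{2\pi\sinh r}\partial_r$. Writing $L_d:=\partial_r^2+(d-1)\coth r\,\partial_r$ for the radial part of $\Delta_{H^d(1)}$, so that $2\partial_s\theta^d_1 = L_d\theta^d_1$ by the equation recorded just before Prop.~\ref{prop:estimate-heat-hyperbolic}, the whole proof hinges on the intertwining identity
\[
L_{d+2}\,D \;=\; D\,L_d \;-\; d\,D.
\]

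First I would verify this identity by a direct computation: expanding $L_{d+2}(Df)$ for an arbitrary smooth radial $f$ produces terms in $f'''$, $f''$, and $f'$ with coefficients in $\sinh r$ and $\cosh r$, and collecting them against $DL_d f$ and using $\cosh^2 r-1=\sinh^2 r$ reduces the discrepancy exactly to $-dDf$. Once the intertwining is known, a short calculation gives the heat equation for $v$:
\[
v_s \;=\; -\tfrac{d}{2}v + e^{-ds/2}D u_s \;=\; -\tfrac{d}{2}v + \tfrac{1}{2}e^{-ds/2}DL_d u \;=\; -\tfrac{d}{2}v + \tfrac{1}{2}(L_{d+2}v + dv) \;=\; \tfrac{1}{2}L_{d+2}v,
\]
so $v$ solves the hyperbolic heat equation in dimension $d+2$.

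For the initial condition, I would test $v(s,\cdot)$ against a smooth compactly supported radial function $\phi$ using the volume element $\omega_{d+1}(\sinh r)^{d+1}\,dr$ on $H^{d+2}(1)$. Integrating by parts in $r$ (to move $\partial_r$ off $u$), using $\frac{d}{dr}(\sinh r)^d=d(\sinh r)^{d-1}\cosh r$, and invoking the classical identity $\omega_{d+1}=\frac{2\pi}{d}\omega_{d-1}$ converts the test integral to
\[
\frac{e^{-ds/2}}{d}\,\omega_{d-1}\int_0^\infty u(s,r)\bigl[d\phi(r)\cosh r\,(\sinh r)^{d-1}+\phi'(r)(\sinh r)^{d}\bigr]\,dr.
\]
As $s\downarrow 0$, the first summand tends to $\phi(0)\cosh(0)=\phi(0)$ because $u(s,\cdot)$ concentrates at $0$ in the $H^d(1)$-volume $\omega_{d-1}(\sinh r)^{d-1}dr$, while the second summand vanishes thanks to the extra factor $\sinh r$. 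Uniqueness of the fundamental solution on $H^{d+2}(1)$ then forces $v=\theta^{d+2}_1$.

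The main obstacle I anticipate is the analytic justification of the two steps in the initial-condition verification: the integration by parts requires decay of $\theta^d_1$ and $\partial_r\theta^d_1$ as $r\to\infty$, and the interchange of $\lim_{s\downarrow 0}$ with the integral requires a uniform Gaussian-type upper bound. Both can be handled using the sub-Gaussian estimate $\theta^d_1(s,r)\le k(d,s)e^{-r^2/(2s)}$ from Prop.~\ref{prop:estimate-heat-hyperbolic} (and a parallel estimate for $\partial_r\theta^d_1$, which can itself be obtained inductively from the ladder relation once it is established for a base case). Alternatively, one could bypass the initial-condition step entirely by induction starting from the explicit Gaussian-type formulas for $\theta^1_1$ and $\theta^3_1$ and applying the ladder relation repeatedly, since the same calculation shows the shift operator $e^{-ds/2}D$ preserves the space of fundamental solutions.
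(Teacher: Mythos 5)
The paper does not actually prove this lemma: it is quoted (up to the rescaling of the time parameter by a factor of $2$) from Davies, Sect.~5.7, so there is no internal proof to compare yours against. Your argument is correct and is essentially the standard derivation of such ladder relations. I checked the identity at its core: with $D=-\frac{1}{2\pi\sinh r}\partial_r$ and $L_d=\partial_r^2+(d-1)\coth r\,\partial_r$, expanding $L_{d+2}(Df)$ and $D L_d f$ for a smooth radial $f$ and using $\cosh^2 r=1+\sinh^2 r$ leaves exactly the discrepancy $-d\,Df$, so $L_{d+2}D=DL_d-dD$ holds; with $v=e^{-ds/2}Du$ and $\partial_s u=\tfrac12 L_d u$ (the convention recorded before Prop.~\ref{prop:estimate-heat-hyperbolic}) this gives $\partial_s v=\tfrac12 L_{d+2}v$ as you claim. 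The initial-condition computation is also right, including the constant $\omega_{d+1}=\frac{2\pi}{d}\,\omega_{d-1}$ and the observation that the $\phi'(r)(\sinh r)^d$ term dies because of the extra factor of $\sinh r$. Two points would need to be made explicit in a full write-up: (i) uniqueness of the fundamental solution on $H^{d+2}(1)$ is invoked within a class of solutions with controlled growth, so one must verify that $v$ lies in that class — the sub-Gaussian bound of Prop.~\ref{prop:estimate-heat-hyperbolic} handles $\theta^d_1$ itself, but the needed bound on $\partial_r\theta^d_1$ is not stated anywhere in the paper and must be supplied (by parabolic gradient estimates or from explicit formulas); (ii) because of that, your alternative of inducting upward from the explicit closed forms of $\theta^1_1$ and $\theta^3_1$ is the cleaner, circularity-free route and is the one I would recommend writing down.
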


\begin{proof}[Proof of Prop.~\ref{prop:int-by-part-hyperbolic}]
	There exists a smooth function $\psi_0\colon \R\to \R$ such that
	\begin{itemize}
		\item $\psi_0(y)=1$ if $y\le 0$, 
		\item $\psi_0(y)=0$ if $y\ge 1$,
		\item $\psi_0$ is decreasing on $[0,1]$.
	\end{itemize}
	(See, e.g., \cite[Sect.~13.1]{Tu}) For any $R>0$ define $\psi_R\colon \R\to \R$ by the translation relation \begin{align*}
		\psi_R(y)=\psi_{0}(y-R).
	\end{align*}
	From the smoothness of $\psi_0$ and the translation relation, we see that there is a constant $L>0$ independent of $R$ such that $\big|\frac{\pt}{\pt p}\psi_R(y)\big|\le L$ and $\big|\frac{\pt^2}{\pt p^2}\psi_R(y)\big|\le L$.
	
	Now, for any $\mb x\in S^{N-1}(b)$, we define the function $\vp_{\mb x,R}\colon T_{\mb x} S^{N-1}(b)\to \R$ by 
	\begin{align*}
		\vp_{\mb x,R}(\mb p)=\psi_R(|\mb p|),
	\end{align*}
	then $\vp_{\mb x,R}$ is smooth, independent of $\mb x$, rotation-invariant in $\mb p$, and is compactly supported with $\vp_{\mb x,R}(\mb p)=0$ when $|\mb p|>R+1$.
	
	By Prop.~\ref{prop:symmetric-op}, we have 
	\begin{align*}
		\int_{\mb x\in S^{N-1}(b)}&\int_{\mb x \cdot \mb p=0}(\Ga_N f_q)(\mb x,\mb p)\,\theta^{N-1}_{b} (2T,2p)\,dh^{N-1}_b(\mb x, \mb p)\\
		&=\lim_{R\to\infty} \int_{\mb x\in S^{N-1}(b)}\int_{\mb x \cdot \mb p=0}(\Ga_N f_q)(\mb x,\mb p)\,\vp_{\mb x,R}(2p)\,\theta^{N-1}_{b} (2T,2p)\,dh^{N-1}_b(\mb x, \mb p)\\
		&=\lim_{R\to\infty} \int_{\mb x\in S^{N-1}(b)}\int_{\mb x \cdot \mb p=0} f_q(\mb x,\mb p)\,\Ga_N \left(\vp_{\mb x,R}(2p)\,\theta^{N-1}_{b} (2T,2p)\right)\,dh^{N-1}_b(\mb x, \mb p).
	\end{align*}
	By Prop.~\ref{prop:Gamma-p-function} and the product rule, we have 
	\begin{align}
		&\lim_{R\to\infty} \int_{\mb x\in S^{N-1}(b)}\int_{\mb x \cdot \mb p=0} f_q(\mb x,\mb p)\,\Ga_N \left(\vp_{\mb x,R}(2p)\,\theta^{N-1}_{b} (2T,2p)\right)\,dh^{N-1}_b(\mb x, \mb p)\label{eqn:est-integrals}\\
		&=\lim_{R\to\infty} \int_{\mb x\in S^{N-1}(b)}\int_{\mb x \cdot \mb p=0} f_q(\mb x,\mb p)\,\vp_{\mb x,R}(2p)\, \left(\Ga_N \theta^{N-1}_b\right)(2T,2p)\,dh^{N-1}_b(\mb x, \mb p)\notag\\
		&\quad +\lim_{R\to\infty} \int_{\mb x\in S^{N-1}(b)}\int_{\mb x \cdot \mb p=0} f_q(\mb x,\mb p)\, 
		\theta^{N-1}_{b}(2T,2p)\,(\Ga_N\vp_{\mb x,R})(2p)\,dh^{N-1}_b(\mb x, \mb p)\notag\\
		&\quad +2b^2\lim_{R\to\infty} \int_{\mb x\in S^{N-1}(b)}\int_{\mb x \cdot \mb p=0} f_q(\mb x,\mb p)\, \left.\frac{\pt}{\pt r}\right|_{r=2p}\, \left.\theta^{N-1}_{b}(2T,r)\frac{\pt}{\pt r}\right|_{r=2p}\vp_{\mb x,R}(r)\,dh^{N-1}_b(\mb x, \mb p).\notag
	\end{align}
	The last term comes from the second-order derivative term in \eqref{eqn:hyp-heat-eqn}. 
	
	Let us denote by $I_2$ and $I_3$ the second integral and the third integral on the right-hand side of \eqref{eqn:est-integrals}, respectively. First, let us estimate $I_2$. 
	Recall from Prop.~\ref{prop:quadric-diffeomorphic-cotang} that 
	\begin{align*}
		\mb a=\cosh \left(\frac {p}{b}\right) \mb x + i\frac {b}{p} \sinh \left(\frac{p}{b}\right) \mb p.
	\end{align*}	
	Since $|\mb x|=b$, and since $\cosh(p/b)$ and $\sinh(p/b)$ both grow exponentially in $p$, we see that $f_q(\mb x,\mb p)$ as a finite sum of hyperbolic sines and hyperbolic cosines of $p/b$ grows at most exponentially, i.e.,
	\begin{align}
		|f_q(\mb x,\mb p)|	\le C_1e^{C_2p} \label{eqn:fq-estimate},					
	\end{align}
	for some constants $C_1,C_2>0$ depending on $\deg(q),N,b$. Recall that the derivatives $\frac{\pt}{\pt r}\vp_{\mb x,R}$ and $\frac{\pt^2}{\pt r^2}\vp_{\mb x,R}$ are bounded by $L$ for all $R\ge 0$. Also, for $y\ge 1$, $\coth (y)\le 2$. Hence, using Prop.~\ref{prop:Gamma-p-function} again, when $\frac{R}{2}<p<\frac{R+1}{2}$ and $R$ is large,
	\begin{align}
		\left|\Ga_N\vp_{\mb x,R}\right| 
		&\le b^2\left|\frac{\pt^2}{\pt r^2}\vp_{\mb x,R}(r)\right|_{r=2p} + b(N-2)\left|\coth \left( \frac {r}{b}\right)\frac{\pt}{\pt r}\vp_{\mb x,R}(r)\right|_{r=2p}
		\le  C_{3},\label{eqn:Ga-phi-estimate}
	\end{align}
	for some constant $C_{3}>0$ depending on $N,b,L$. In addition, the function $\be_b^{N-1}$ defining $dh^{N-1}_b$ (Prop.~\ref{prop:hyperbol-inv-measure}) satisfies if $p>1$, then
	\begin{align}
		2^{N-1}\be^{N-1}_{b}(2p) \le 2e^{\frac{2(N-2)p}{b}}\label{eqn:beta-estimate}.
	\end{align}

	Note that $\Ga_N\vp_{\mb x,R}(p)=0$ on $\R\setminus(\frac{R}{2},\frac{R+1}{2})$,  for $R$ large enough,
	\begin{align}
		&|I_2|\le NC_1C_3\, \int_{\mb x\in S^{N-1}(b)} \int_{\frac{R}{2}<p<\frac{R+1}{2}} e^{C_2p}\,
		\theta^{N-1}_{b}(2T,2p)\,2e^{\frac{2(N-2)p}{b}}\, d\ovl{\sig}^{N-1}_b(\mb x)\notag\\
		& \le \int_{\frac{R}{2}}^{\frac{R+1}{2}}e^{C_{4}p}\theta^{N-1}_{b}(2T,2p)\,dp\label{eqn:upper-est-I2}.
	\end{align}
	for some constant $C_{4}>\max\{C_{2},2(N-2)/b\}$. With the upper-bound from  Prop.~\ref{prop:estimate-heat-hyperbolic}, 	Eq.~\eqref{eqn:upper-est-I2} gives us
	\begin{align}
		|I_2| \le k(N-1,2T)\int_{\frac{R}{2}} ^{\frac{R+1}{2}}\, e^{C_{4}p}\, e^{-\frac{p^2}{T}}\,dp.\label{eqn:upper-est-I2-2}
	\end{align}
	Since we are considering $T$ fixed, the term $k(N-1,2T)$ is also a constant, so \eqref{eqn:upper-est-I2-2} implies 
	\begin{align*}
		\lim_{R\to\infty} |I_2| = \lim_{R\to\infty} e^{C_4R}e^{-\frac{R^2}{T}}=0.
	\end{align*}
	
	\smallskip
	
	For the integral $I_3$, Lem.~\ref{lem:recur-relation} implies
	\begin{align*}
		\frac{\pt}{\pt r} \theta^{N-1}_b(2T,r) = -2\pi e^{\frac{(N-1)T}{b^2}}\sinh\left(\frac{r}{b}\right) \theta^{N+1}_b(2T,r). 
	\end{align*}
	Now, $\sinh\left(\frac{r}{b}\right)\le e^{\frac{r}{b}}$ for $r$ large, and Prop.~\ref{prop:estimate-heat-hyperbolic} implies 
	\begin{align*}
		\left|\theta^{N+1}_b(2T,2p)\right|\le k(N+1,2T)\,e^{-\frac{p^2}{T}}
	\end{align*}
	for some constant $k(N+1,2T)$. Hence,
	\begin{align*}
		\left|\frac{\pt}{\pt r}\theta^{N-1}_b(2T,r)\right|_{r=2p}\le C_5\,e^{C_6p}\,e^{-\frac{p^2}{T}},
	\end{align*}
	for some constants $C_5,C_6>0$ depending on $b,N,T$, so using the same argument as the integral $I_2$, we have 
	\begin{align*}
		\lim_{R\to\infty}|I_3|=0.
	\end{align*}
	In conclusion,
	\begin{align*}
		&\int_{\mb x\in S^{N-1}(b)}\int_{\mb x \cdot \mb p=0}(\Ga_N f_q)(\mb x,\mb p)\,\theta^{N-1}_{b} (2T,2p)\,dh^{N-1}_b(\mb x, \mb p)\\
		&\qquad= \lim_{R\to\infty} \int_{\mb x\in S^{N-1}(b)}\int_{\mb x \cdot \mb p=0} f_q(\mb x,\mb p)\,\vp_{\mb x,R}(2p)\, \left(\Ga_N \theta^{N-1}_{b}\right)(2T,2p)\,dh^{N-1}_b(\mb x, \mb p)\\
		&\qquad= \int_{\mb x\in S^{N-1}(b)}\int_{\mb x \cdot \mb p=0} f_q(\mb x,\mb p)\, \left(\Ga_N \theta^{N-1}_{b}\right)(2T,2p)\,dh^{N-1}_b(\mb x, \mb p).\qedhere
	\end{align*}
\end{proof}
We would like to imitate Thm.~\ref{thm:heat-powerseries}, i.e., to represent the heat (integral) operator as a power series of a exponential type. However, since the heat operator on the quadric $Q^{N-1}(b)\cong TS^{N-1}(b)$ is a double integral that requires first to integrate over the tangent plane at a point on the sphere and then to integrate over the sphere, the result needs some modification. 

\begin{prop}\label{prop:heat-integral}
	For any $T>0$ and any polynomial $q(\mb a,\ovl{\mb a})\in \cal P_k(\C)$ with $k<N$ we have 
	\begin{align*}
		\int_{Q^{N-1}(b)}q(\mb a(\mb x, \mb p),\ovl {\mb a}(\mb x, \mb p))\,d\nu^{N-1}_{b,T}(\mb x, \mb p)
		\hfill = \int_{S^{N-1}(b)} \left(e^{\frac T{b^2}\Ga_N}q\right)(\mb a(\mb x,\mb 0),\ovl {\mb a}(\mb x,\mb 0))\,d\ovl\sig^{N-1}_b(\mb x),
	\end{align*}
	where the exponential operator on the right-hand side is understood as a power series expansion of $\Ga_N$.
\end{prop}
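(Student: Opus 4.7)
My plan is to treat the right-hand side as the unique ``heat extension'' of the $T=0$ case, using $\Ga_N$ as the natural generator on the quadric. Since each rotation generator $J_{kl}=a_l\pt_{a_k}-a_k\pt_{a_l}$ preserves total degree, $\Ga_N=\tfrac12(J^2_{\mb a,N}+J^2_{\ovl{\mb a},N})$ restricts to a linear endomorphism of the finite-dimensional space $\cal P^{\le l}_N(\C)$ with $l=\deg q$, so the operator exponential $e^{(T/b^2)\Ga_N}$ is an honest convergent matrix exponential on that space. Combining Prop.~\ref{prop:Gamma-p-function} (applied to $f(\mb p)=\theta^{N-1}_b(s,p)$) with the hyperbolic heat equation of Sect.~\ref{chap:coherent-states-sphere}, and using the chain rule $s=2T$, $r=2p$, one obtains the key pointwise identity
\begin{align*}
    \frac{\pt}{\pt T}\,\theta^{N-1}_b(2T,2p)=\frac{1}{b^2}\,\Ga_N\,\theta^{N-1}_b(2T,2p),
\end{align*}
so $\theta^{N-1}_b(2T,2p)$ plays the role of a heat kernel for the operator $b^{-2}\Ga_N$ on $Q^{N-1}(b)$.

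For $T>0$ I would then introduce the ``shifted'' integral
\begin{align*}
    K(T):=\int_{Q^{N-1}(b)}\!\left(e^{-(T/b^2)\Ga_N}q\right)\!(\mb a(\mb x,\mb p),\ovl{\mb a}(\mb x,\mb p))\,\theta^{N-1}_b(2T,2p)\,dh^{N-1}_b(\mb x,\mb p),
\end{align*}
which is well defined because $e^{-(T/b^2)\Ga_N}q$ is still a polynomial in $\cal P^{\le l}_N(\C)$ and the Gaussian decay of $\theta^{N-1}_b$ from Prop.~\ref{prop:estimate-heat-hyperbolic} dominates the exponential-in-$p$ growth \eqref{eqn:fq-estimate}. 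Differentiating under the integral sign and using the identity above yields
\begin{align*}
    K'(T)=-\frac{1}{b^2}\!\int\!\left(\Ga_N\,e^{-(T/b^2)\Ga_N}q\right)\!\theta^{N-1}_b(2T,2p)\,dh+\frac{1}{b^2}\!\int\!\left(e^{-(T/b^2)\Ga_N}q\right)\!\Ga_N\theta^{N-1}_b(2T,2p)\,dh,
\end{align*}
and the integration-by-parts formula Prop.~\ref{prop:int-by-part-hyperbolic} (applied with the polynomial $e^{-(T/b^2)\Ga_N}q$ in place of $q$) shows the two terms are equal, so $K'\equiv 0$ on $(0,\infty)$.

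To identify the constant value, I would let $T\to 0^+$: the polynomial $e^{-(T/b^2)\Ga_N}q$ converges to $q$ in the finite-dimensional space $\cal P^{\le l}_N(\C)$ (hence uniformly on compact subsets of $\C^N$), and after the substitution $\mb r=2\mb p$ the measure $\theta^{N-1}_b(2T,2p)\,dh^{N-1}_b(\mb x,\mb p)$ becomes $\theta^{N-1}_b(2T,r)\,\be^{N-1}_b(r)\,d\mb r\,d\ovl\sig^{N-1}_b(\mb x)$, whose $\mb r$-marginal tends to a Dirac mass at the origin by the fundamental-solution property of the hyperbolic heat kernel. Since $\mb a(\mb x,\mb 0)=\mb x=\ovl{\mb a}(\mb x,\mb 0)$, this yields $K(T)=\int_{S^{N-1}(b)}q(\mb a(\mb x,\mb 0),\ovl{\mb a}(\mb x,\mb 0))\,d\ovl\sig^{N-1}_b(\mb x)$ for every $T>0$. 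Finally, applying this constancy with $q$ replaced by the polynomial $e^{(T/b^2)\Ga_N}q\in\cal P^{\le l}_N(\C)$ collapses $e^{-(T/b^2)\Ga_N}e^{(T/b^2)\Ga_N}$ to the identity on the left, producing exactly the desired equality.

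The main technical obstacle will be rigorous justification of differentiation under the integral sign for $K(T)$, which requires a Gaussian upper bound on $\pt_T\theta^{N-1}_b(2T,2p)$ that is uniform on compact $T$-intervals. Such an estimate can be extracted from Prop.~\ref{prop:estimate-heat-hyperbolic} together with the recurrence of Lem.~\ref{lem:recur-relation} (the same strategy used to control the integral $I_3$ in the proof of Prop.~\ref{prop:int-by-part-hyperbolic}), after which the remainder of the argument is a routine semigroup unwinding.
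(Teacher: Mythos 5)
Your proposal is correct and follows essentially the same route as the paper: both rest on the identity $\pt_T\theta^{N-1}_b(2T,2p)=b^{-2}\Ga_N\theta^{N-1}_b(2T,2p)$, differentiation under the integral sign, the integration-by-parts formula of Prop.~\ref{prop:int-by-part-hyperbolic}, and the $T\downarrow 0$ fundamental-solution property, with $e^{(T/b^2)\Ga_N}$ made rigorous via finite-dimensionality of the degree-bounded polynomial space. The only difference is cosmetic: the paper solves the resulting linear ODE for the functional $A^{k,l}_{b,T}$ by exhibiting and verifying the solution $B^{k,l}_b e^{(T/b^2)\Ga_N}$, whereas you establish the same conclusion by showing the compensated quantity $K(T)$ has vanishing derivative, which is the standard uniqueness argument for that ODE.
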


We need to prove some auxiliary results first.
\begin{lem}\label{lem:J^2_a}
	Denote by $J^2_{\mb a,N}\big|_{Q^{N-1}(b)}$ the operator that first applies $J^2_{\mb a,N}$ then restricts to the quadric $Q^{N-1}(b)$. Then
	\begin{align}\label{eqn:Ja}
		J_{\mb a,N}^2\big|_{Q^{N-1}(b)}=-b^2\sum_{j=1}^N \frac{\pt^2}{\pt a_j^2}+(a\pt_a)^2_N
		+ (N-2)(a\pt_a)_N.
	\end{align}
\end{lem}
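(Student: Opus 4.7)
The plan is to prove the sharper, ``unrestricted'' operator identity
\[
\sum_{1\le k<l\le N}(a_l\pt_{a_k}-a_k\pt_{a_l})^2 = \Bigl(\sum_{j=1}^N a_j^2\Bigr)\sum_{j=1}^N\frac{\pt^2}{\pt a_j^2} - (a\pt_a)_N^2 - (N-2)(a\pt_a)_N
\]
as an equality of holomorphic differential operators on all of $\C^N$, and only then restrict to $Q^{N-1}(b)$, where the function $\mb a^2=\sum_j a_j^2$ becomes the constant $b^2$. Multiplying through by $-1$ and recalling $J_{\mb a,N}^2=-\sum_{k<l}J_{kl}^2$ then yields \eqref{eqn:Ja}. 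Conceptually this is the holomorphic counterpart of the classical identity that converts the polar form of the Euclidean Laplacian \eqref{eqn:sphere-Laplacian-1} into the angular-momentum form \eqref{eqn:sphere-Laplacian-2}: since $J_{kl}$ is a first-order holomorphic operator with the same formal structure as the real vector field $x_l\pt_{x_k}-x_k\pt_{x_l}$, the same algebraic identity must persist in the holomorphic category.

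The verification is a direct commutator computation. First I would expand, using $[\pt_{a_k},a_k]=1$ and $[\pt_{a_k},a_l]=0$ for $k\ne l$,
\[
J_{kl}^2 = a_l^2\pt_{a_k}^2 + a_k^2\pt_{a_l}^2 - 2a_ka_l\pt_{a_k}\pt_{a_l} - a_k\pt_{a_k} - a_l\pt_{a_l}.
\]
Summing over $k<l$ splits into three pieces that I would compute separately: for the pure second derivatives,
\[
\sum_{k<l}(a_l^2\pt_{a_k}^2+a_k^2\pt_{a_l}^2)=\sum_{k\ne l}a_l^2\pt_{a_k}^2 = \mb a^2\sum_k\pt_{a_k}^2-\sum_k a_k^2\pt_{a_k}^2;
\]
for the first-order terms, observing that each index appears in exactly $N-1$ pairs,
\[
\sum_{k<l}(a_k\pt_{a_k}+a_l\pt_{a_l}) = (N-1)(a\pt_a)_N;
\]
and for the mixed second derivatives, I would invoke the auxiliary identity
\[
(a\pt_a)_N^2 = \sum_k a_k^2\pt_{a_k}^2 + (a\pt_a)_N + 2\sum_{k<l}a_ka_l\pt_{a_k}\pt_{a_l},
\]
which itself follows from splitting $\sum_{k,l}(a_k\pt_{a_k})(a_l\pt_{a_l})$ into its $k=l$ and $k\ne l$ contributions and applying $[\pt_{a_k},a_k]=1$ once. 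Substituting the three pieces into $\sum_{k<l}J_{kl}^2$, the $\sum_k a_k^2\pt_{a_k}^2$ terms cancel cleanly and the remaining terms reassemble into the right-hand side of the displayed identity above.

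Restricting to $Q^{N-1}(b)$ replaces multiplication by $\mb a^2$ with multiplication by the constant $b^2$, and a sign flip gives \eqref{eqn:Ja} exactly. I do not expect any conceptual obstacle: the argument is entirely bookkeeping with first-order commutators, parallel to the standard derivation of \eqref{eqn:sphere-Laplacian-2} from \eqref{eqn:sphere-Laplacian-1}. The one point I want to be careful about is that the identity is genuinely most natural to prove \emph{before} restriction, as an equality of operators on $\C^N$, because the operator $\sum_j\pt^2/\pt a_j^2$ on the right-hand side is an ambient Euclidean object rather than an intrinsic object on the quadric; the restriction should intervene only at the very last step, via $\mb a^2=b^2$.
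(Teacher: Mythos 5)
Your proposal is correct and follows essentially the same route as the paper: the paper likewise establishes the unrestricted identity $\sum_{k<l}J_{kl}^2=\mb a^2\sum_j\pt^2/\pt a_j^2-(a\pt_a)_N^2-(N-2)(a\pt_a)_N$ on $\C^N$ (by citing the analogous real-variable computation in \cite[Sect.~3.6]{PeSen} with $x_j$ replaced by $a_j$) and only then substitutes $\mb a^2=b^2$ on the quadric. The only difference is that you carry out the commutator bookkeeping explicitly where the paper defers to a reference, and your computation checks out.
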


\begin{proof}
	Repeat the same computation in \cite[Sect.~3.6]{PeSen} with $x_j$'s replaced by $a_j$'s, $1\le j\le N$, we have 
	\begin{align*}
		J_{\mb a,N}^2&=-\sum_{1\le j<k\le N}\left(a_j\frac{\pt}{\pt a_k}-a_k\frac{\pt}{\pt a_j}\right)^2\\
		&=-\sum_{k=1}^N a_k^2\sum_{j=1}^N \frac{\pt^2}{\pt a_j^2}+\left(\sum_{j=1}^N a_j \frac{\pt}{\pt a_j}\right)^2
		+ (N-2)\sum_{j=1}^N a_j\frac{\pt}{\pt a_j}.
	\end{align*}
	When restricting to the quadric $Q^{N-1}(b)$, we have $\sum_{k=1}^N a_k^2=\mb a^2=b^2$, so we obtained the desired conclusion.
\end{proof}

\begin{lem}\label{lem:power-series-hyperbolic}
	Denote by $\Ga_N\big|_{Q^{N-1}(b)}$ the operator that first applies $\Ga_{N}$ and then restricts to the quadric $Q^{N-1}(b)$. Then, for any $N>k$,
	\begin{enumerate}[(i)]
		\item For any $f\in\cal P_k(\C)$, there exists a unique polynomial $g\in\cal P_k(\C)$ such that
		\begin{align*}
			\Ga_N|_{Q^{N-1}(b)}\, f = g|_{Q^{N-1}(b)}.
		\end{align*}
		Thus, we can consider $\Ga_{N}|_{Q^{N-1}(b)}$ as a map from $\cal P_k(\C)$ to itself.
		\item For any $q\in\cal P_k(\C)$ considered as a function on the quadric with $k<N$, the series 
		\begin{align*}
			e^{\frac{T}{b^2}\Ga_N} q:=\sum_{n=0}^\infty \frac{1}{n!}\left(\left.\frac{T}{b^2}\Ga_N\right|_{Q^{N-1}(b)}\right)^nq.
		\end{align*}
		is a polynomial in $\cal P_k(\C)$, so for any fixed $k<N$, the operator $e^{\frac{T}{b^2}\Ga_N}$ maps $\cal P_k(\C)$ to itself.
	\end{enumerate}
\end{lem}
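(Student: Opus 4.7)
My approach is to prove both parts by producing an explicit polynomial differential operator $\tilde\Ga_N$ on $\cal P_k(\C)$ whose action agrees with $\Ga_N$ after restriction to the quadric. Applying Lem.~\ref{lem:J^2_a} to $f\in\cal P_k(\C)$, each partial derivative $\pt/\pt a_j$ with $j>k$ annihilates $f$, so every sum $\sum_{j=1}^N$ appearing in \eqref{eqn:Ja} truncates to $\sum_{j=1}^k$. A parallel identity holds for $J^2_{\bar{\mb a},N}\big|_{Q^{N-1}(b)}$ (it is obtained by complex conjugation, using $\overline{\mb a}\cdot\overline{\mb a}=\overline{b^2}=b^2$ on the quadric since $b$ is real). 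Averaging, this gives
\begin{align*}
\Ga_N\big|_{Q^{N-1}(b)} f = (\tilde\Ga_N f)\big|_{Q^{N-1}(b)},
\end{align*}
where
\begin{align*}
\tilde\Ga_N := \tfrac{1}{2}\Bigl[ -b^2 \sum_{j=1}^k \Bigl(\tfrac{\pt^2}{\pt a_j^2}+\tfrac{\pt^2}{\pt \bar a_j^2}\Bigr) + (a\pt_a)_k^2 + (\bar a\pt_{\bar a})_k^2 + (N-2)\bigl((a\pt_a)_k + (\bar a\pt_{\bar a})_k\bigr)\Bigr].
\end{align*}
Since $\tilde\Ga_N$ visibly maps $\cal P_k(\C)$ into itself, the polynomial $g:=\tilde\Ga_N f$ witnesses existence in (i).

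For uniqueness in (i), I would show the restriction map $\cal P_k(\C)\to C(Q^{N-1}(b))$ is injective whenever $k<N$. The projection $Q^{N-1}(b)\to \C^k$ onto the first $k$ coordinates is surjective: given any $(a_1,\dots,a_k)\in\C^k$, the point $(a_1,\dots,a_k,\sqrt{b^2-a_1^2-\dots-a_k^2},0,\dots,0)$ lies on $Q^{N-1}(b)$ because $N-k\ge 1$ provides at least one free coordinate to absorb the constraint. Hence if $g\in\cal P_k(\C)$ vanishes on the quadric, then $g(z,\bar z)=0$ for every $z\in\C^k$; rewriting $g$ in the real variables $u_j=\re a_j$, $v_j=\im a_j$ and invoking the elementary fact that a polynomial on $\R^{2k}$ which vanishes identically must be zero, one concludes $g\equiv 0$.

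For (ii), the crucial structural observation is that $\tilde\Ga_N$ preserves the finite-dimensional subspace $\cal P^{\le l}_k(\C)$ for each $l\in\N$: the second-derivative terms strictly lower the total degree by two, while the Cauchy--Euler operators $(a\pt_a)_k$ and $(\bar a\pt_{\bar a})_k$ act by scalar multiplication by the degree on homogeneous pieces and thus preserve degree. Consequently, for any $q\in\cal P_k(\C)$ with $\deg q\le l$, the operator $(T/b^2)\tilde\Ga_N$ restricts to a linear endomorphism of the finite-dimensional space $\cal P_k^{\le l}(\C)$, so its exponential is a matrix exponential: the series converges unconditionally and delivers $e^{(T/b^2)\tilde\Ga_N}q\in \cal P_k^{\le l}(\C)\subset\cal P_k(\C)$. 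I do not anticipate a genuine obstacle here; the main substantive content is the careful bookkeeping in producing $\tilde\Ga_N$ from Lem.~\ref{lem:J^2_a}, together with the elementary surjectivity/injectivity argument underpinning uniqueness.
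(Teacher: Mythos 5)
Your proposal is correct and follows essentially the same route as the paper: existence via the explicit polynomial operator obtained from Lem.~\ref{lem:J^2_a} (and its conjugate), uniqueness via the surjectivity of the projection of $Q^{N-1}(b)$ onto the first $k$ coordinates when $N>k$, and part (ii) via the matrix exponential on the finite-dimensional space $\cal P_k^{\le l}(\C)$. Your only additions are making explicit the truncation of the sums to $j\le k$ and the degree-preservation of $\tilde\Ga_N$, both of which the paper leaves implicit, and these are correct (modulo the harmless imprecision that $(a\pt_a)_k$ scales a monomial by its holomorphic degree rather than its total degree).
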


Note that part (i) of the lemma is the complex counterpart (although we still keep the factor $b^2$) of Lem.~\ref{lem:maps-of-sph-lap}.

\begin{proof}
	\begin{enumerate}[(i)]
		\item We first prove that there exists a unique $h\in\cal P_k(\C)$ such that 
		$$J^2_{\mb a,N}|_{Q^{N-1}(b)}\, f = h|_{Q^{N-1}(b)}.$$ 
		For any $f\in\cal P_k(\R)$ (note that $f$ is not necessarily holomorphic), consider the polynomial $h$ given by the operator on the right-hand side of \eqref{eqn:Ja} applying on $f$. Note that $h$ only depends on $2k$ variables $a_1,\dots,a_k,\bar a_1,\cdots,\bar a_k$ as the differential operator $a_j\pt/\pt a_j$ for $j>k$ has no effect on $f$.
		
		Suppose there exists another polynomial $q\in \cal P_k(\C)$ such that $q|_{Q^{N-1}(b)}=h|_{Q^{N-1}(b)}$. Then $(q-h)|_{Q^{N-1}(b)}$ is identically $0$ on $Q^{N-1}(b)$. As $\mb a$ varies on $Q^{N-1}(b)$, the complex vector $(a_1,\cdots a_k)$ receives all possible values in $\C^k$, so if $a_j=u_j+iv_j$ $(u_j,v_j\in\R,1\le j\le k)$, then $q-h$ as a polynomial of $2k$ real variables $u_j,v_j$'s is $0$ in $\R^{2k}$, which implies $q\equiv h$.
		
		Defining the notation $J_{\ovl{\mb 	a},N}^2\big|_{Q^{N-1}(b)}$ in the same manner as $J_{\mb a,N}^2\big|_{Q^{N-1}(b)}$ in Lem.~\ref{lem:J^2_a}, we see that 
		\begin{align}
			J_{\ovl{\mb a},N}^2\big|_{Q^{N-1}(b)}=
			-a^2\sum_{j=1}^N \frac{\pt^2}{\pt \bar a_j^2}+\left(\sum_{j=1}^N \bar a_j \frac{\pt}{\pt \bar a_j}\right)^2
			+ (N-2)\sum_{j=1}^N \bar a_j\frac{\pt}{\pt \bar a_j}.
		\end{align}
		and following the similar argument, $J^2_{\ovl{\mb a},N}$ maps $\cal P_k(\C)$ to itself.
		
		Finally, $\Ga_N|_{Q^{N-1}(b)}=\frac{1}{2}(J^2_{\mb a, N}+J^2_{\ovl{\mb a},N})|_{Q^{N-1}(b)}$ as a sum of two operators on $\cal P_k(\C)$. In particular, $g=\frac{1}{2}(J^2_{\mb a,N}|_{Q^{N-1}(b)}f+J^2_{\ovl{\mb a},N}|_{Q^{N-1}(b)}f)$ satisfies the statement.
		
		\item For any $q\in\cal P_k(\C)$, we can find a least $l$ such that $q\in\cal P_k^{\le l}(\C)$. As $\Ga_N|_{Q^{N-1}(b)}$ is a linear operator on a finite dimensional vector space $\cal P_{k}^{\le l}(\C)$, we can find a matrix representation for  $\Ga_N|_{Q^{N-1}(b)}$ and write the exponential operator as a power series expansion,
		\begin{align*}
			e^{\frac{T}{b^2}\Ga_N} q=\sum_{n=0}^\infty \left.\frac{1}{n!}\left(\frac{T}{b^2}\Ga_N\right|_{Q^{N-1}(b)}\right)^nq.
		\end{align*}
		as we desired.\qedhere
	\end{enumerate}
\end{proof}

\begin{lem}[Exchanging integral with derivative {\cite[Thm.~2.27]{Fol95}}]\label{lem:interchange-lim-int}
	Let $(X,\mu)$ be a measure space, and $f(t,x)\colon \R^+\times X\to \R$ is integrable in $x$ and $C^1$ in $t$. Fix $t_0>0$. Suppose there exists $\ve\in(0,t_0)$ and a non-negative real-valued function $g_\ve\in L^1(X,\mu)$ such that $\big|\frac{\pt}{\pt t}f(t,x)\big |<g_\ve(x)$ for all $(t,x)\in (t_0-\ve,t_0+\ve)\times X$. Then
	\begin{align*}
		\left.\frac{\pt}{\pt t}\right|_{t=t_0}\int_{X} f(t,x)\,d\mu(x) = \int_{X} \left.\frac{\pt}{\pt t}\right|_{t=t_0}f(t,x)\,d\mu(x).
	\end{align*}
\end{lem}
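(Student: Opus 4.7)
The plan is to apply the Dominated Convergence Theorem to the difference quotients $D_h(x):=\frac{f(t_0+h,x)-f(t_0,x)}{h}$ as $h\to 0$. Setting $F(t):=\int_X f(t,x)\,d\mu(x)$, which is defined for $t$ in a neighborhood of $t_0$ by the integrability hypothesis, linearity of the integral yields $F'(t_0)=\lim_{h\to 0}\int_X D_h(x)\,d\mu(x)$, and the entire content of the lemma is that this limit may be moved inside the integral.

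First I would reduce to the sequential form of the limit: it is enough to show that for every sequence $h_n\to 0$ with $0<|h_n|<\ve$, one has $\int_X D_{h_n}\,d\mu\to \int_X \frac{\pt f}{\pt t}(t_0,x)\,d\mu(x)$. Pointwise convergence $D_{h_n}(x)\to \frac{\pt f}{\pt t}(t_0,x)$ for every $x\in X$ is immediate from the assumption that $t\mapsto f(t,x)$ is $C^1$. For the domination step, the Mean Value Theorem applied to $t\mapsto f(t,x)$ on the interval between $t_0$ and $t_0+h_n$ produces, for each $x$ and each $n$, some $\tau_{n,x}$ strictly between these endpoints with $D_{h_n}(x)=\frac{\pt f}{\pt t}(\tau_{n,x},x)$. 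Since $|h_n|<\ve$ forces $\tau_{n,x}\in(t_0-\ve,t_0+\ve)$, the hypothesis of the lemma delivers $|D_{h_n}(x)|<g_\ve(x)$, and $g_\ve\in L^1(X,\mu)$ is the required dominating function independent of $n$.

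The Dominated Convergence Theorem then supplies both the existence of $\lim_{n\to\infty}\int_X D_{h_n}\,d\mu$ and its equality with $\int_X \frac{\pt f}{\pt t}(t_0,x)\,d\mu$; because this holds along every such sequence $h_n$, the continuous-parameter limit in $h$ exists and equals the same value, which is exactly the identity claimed. There is essentially no obstacle here: this is a routine domination argument, and the only delicate point worth attention is the legitimacy of the Mean Value Theorem step, which requires $t\mapsto f(t,x)$ to be differentiable on an open interval containing $[t_0,t_0+h_n]$ (or its reverse) for each fixed $x$, and this is precisely what the $C^1$ hypothesis guarantees.
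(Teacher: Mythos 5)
Your proof is correct and follows essentially the same route as the paper's own (commented-out) argument and the cited Folland Theorem~2.27: form the difference quotients, use the Mean Value Theorem together with the bound $|\pt_t f|<g_\ve$ on $(t_0-\ve,t_0+\ve)$ to dominate them by $g_\ve\in L^1$, and conclude by the Dominated Convergence Theorem. Your extra care in reducing the continuous limit $h\to 0$ to arbitrary sequences $h_n\to 0$ is a minor refinement of the same argument, not a different approach.
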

\begin{proof}[Proof of Prop.~\ref{prop:heat-integral}]
	Fix $k<N$ and for any $l\in \N$, define the functional $A_{b,T}^{k,l}\colon \cal P_{k}^{\le l}(\C) \to \C$ by the rule
	\begin{align*}
		A_{b,T}^{k,l}\,q=\int_{Q^{N-1}(b)} q\,d\nu^{N-1}_{b,T}.
	\end{align*} 
	From Lem.~\ref{prop:Gamma-p-function}, keeping in mind the correct time $2T$, we have 
	\begin{align*}
		\frac{\pt}{\pt T}\theta^{N-1}_{b}(2T,2p)=\left[\frac{\pt^2}{\pt p^2}+\frac{N-2}{b}\coth \left(\frac{p}{b}\right)\frac{\pt}{\pt p}\right]\theta^{N-1}_{b}(2T,2p)
		=\frac{1}{b^2}\left(\Ga_N \theta^{N-1}_{b}\right)(2T,2p).
	\end{align*}

	First, we note that the conditions of Lem.~\ref{lem:interchange-lim-int} are satisfied with $f(t,x)=\theta^{N-1}_b(t,x)$: following the same argument as the proof of Prop.~\ref{prop:int-by-part-hyperbolic}, as long as $T$ stays away from $0$, both terms
	\begin{align*}
		\frac{N-2}{b}\coth\left(\frac{p}{b}\right)\frac{\pt}{\pt p} \theta^{N-1}_b(2T,2p)\quad \tnm{and} \quad \frac{\pt^2}{\pt p^2} \theta^{N-1}_b(2T,2p)
 	\end{align*}
	are smooth near $p=0$ and dominated by $c_Ne^{-\frac{p^2}{T}}$ for some constant $c_N$ depending only on $N$ when $p$ is large (for the second term, we take another $\pt/\pt r$ of the recurrence relation in Lem.~\ref{lem:recur-relation} and use the estimate in Prop.~\ref{prop:estimate-heat-hyperbolic} again). 
	
	Applying Lem.~\ref{lem:interchange-lim-int}, that is, differentiating under the integral sign, we get
	\begin{align*}
		\frac{\pt}{\pt T}\int_{Q^{N-1}(b)}q\,d\nu^{N-1}_{b,T}
		&= \int_{Q^{N-1}(b)}q\,\left[\frac{\pt}{\pt T}\theta^{N-1}_{b}(2T,2p)\right]
		dh^{N-1}_{\sqrt N}\\
		& 
		= \frac {1}{b^2}\int_{Q^{N-1}}q\,
		\left(\Ga_N\theta^{N-1}_{b}\right)(2T,2p)
		\,dh^{N-1}_{\sqrt N}.
	\end{align*}
	Then applying Prop.~\ref{prop:int-by-part-hyperbolic} on the last part of the equality above, we have 
	\begin{align*}
		\frac{\pt}{\pt T}\int_{Q^{N-1}(b)}q\,d\nu^{N-1}_{b,T}
		= \frac{1}{b^2}\int_{Q^{N-1}(b)} (\Ga_Nq)\,d\nu^{N-1}_{b,T}.
	\end{align*}
	This leads to the differential equation
	\begin{align}\label{eqn:hyp-diff-eqn-1}
		\frac{\pt}{\pt T} A^{k,l}_{b,T}=\frac{1}{b^2}   A^{k,l}_{b,T}\Ga_N\big|_{Q^{N-1}(b)}
	\end{align}
	where we consider $\Ga_N\big|_{Q^{N-1}(b)}$ as a self-map on $\cal P^{\le l}_k(\C)$ from Lem.~\ref{lem:power-series-hyperbolic}(i). The initial condition is
	\begin{align}\label{eqn:hyp-diff-eqn-2}
		\lim_{T\downarrow 0} A^{k,l}_{b,T}=B^{k,l}_{b} 
	\end{align}
	where $B^{k,l}_{b}\colon \cal P^{\le l}_k(\C) \to \C$ is defined by
	\begin{align*}
		B^{k,l}_{b} q =\int_{S^{N-1}(b)} q(\mb a(\mb x, \mb 0),\ovl{\mb a}(\mb x,\mb 0))\,d\ovl{\sig}^{N-1}_b(\mb x).
	\end{align*} 
	
	Using knowledge from elementary Calculus, we can speculate that the solution is
	\begin{align*}
		A^{k,l}_{b,T} = B^{k,l}_{b}e^{\frac{T}{b^2}\Ga_N}
		=B^{k,l}_{b}\sum_{n=0}^\infty \left.\frac{1}{n!}\left(\frac{T}{b^2}\Ga_N\right)^n\right|_{\cal P^{\le l}_k(\C)},
	\end{align*}
	with the reminder that $\Ga_N$ when acting on $\cal P^{\le l}_k(\C)$ depends only on $k$ and not $N$. It is easy to verify that the speculated $A^{k,l}_{b,T}$ satisfy the equations \eqref{eqn:hyp-diff-eqn-1} and \eqref{eqn:hyp-diff-eqn-2}. Now, this proof obviously applies for any $l\in\N$, so for any $q\in\cal P_k(\C)$,
	\begin{align*}
		\int_{Q^{N-1}(b)}q(\mb a(\mb x, \mb p),\ovl {\mb a}(\mb x, \mb p))\,d\nu^{N-1}_{b,T}(\mb x, \mb p)
		= \int_{S^{N-1}(b)} \left(e^{\frac {T}{b^2}\Ga_N}q\right)(\mb a(\mb x,\mb 0),\ovl {\mb a}(\mb x,\mb 0))\,d\ovl\sig^{N-1}_b(\mb x).
	\end{align*}
	as we want.
\end{proof}

\smallskip

\begin{proof}[Proof of Thm.~\ref{thm:limitquad}]
	Using Lem.~\ref{lem:J^2_a} for $J^2_{\mb a,N}$ and the conjugate counterpart $J^2_{\ovl{\mb a},N}$ we have
	\begin{align*}
		\frac{1}{N}\Ga_N\big|_{Q^{N-1}(\sqrt N)} &= -\frac{1}{2}\sum_{j=1}^N\left(\frac{\pt^2}{\pt a_j^2}+\frac{\pt^2}{\pt \bar a_j^2}\right)+
		\frac{1}{2}\sum_{j=1}^N\left(a_j\frac{\pt}{\pt a_j}+\bar a_j\frac{\pt}{\pt \bar a_j}\right)\\
		&\quad +\frac{1}{2N}\left(\left(\sum_{j=1}^N a_j \frac{\pt}{\pt a_j}\right)^2+\left(\sum_{j=1}^N \bar a_j \frac{\pt}{\pt \bar a_j}\right)^2 - 2\sum_{j=1}^N a_j\frac{\pt}{\pt a_j} - 2\sum_{j=1}^N \bar a_j\frac{\pt}{\pt \bar a_j}\right).
	\end{align*}
	
	We introduce the differential operator 
	\begin{align*}
		G_\infty :=-\frac{1}{2}\sum_{j=1}^\infty\left(\frac{\pt^2}{\pt a_j^2}+\frac{\pt^2}{\pt \bar a_j^2}\right)+
		\frac{1}{2}\sum_{j=1}^\infty\left(a_j\frac{\pt}{\pt a_j}+\bar a_j\frac{\pt}{\pt \bar a_j}\right),
	\end{align*} 
	For any polynomial $q$ in a finite dimensional vector space $\cal P^{\le l}_k(\C)$ , we have 
	Therefore,
	\begin{align*}
		\lim_{N\to \infty}\frac{1}{N}\Ga_N|_{Q^{N-1}(\sqrt N)} q = G_\infty q ,\qquad q\in \cal P^{\le l}_k(\C).
	\end{align*}
	 The convergence is  with respect to an arbitrary norm on $\cal P^{\le l}_k(\C)$. For any $k\in\N^*$, we also define the operator
	\begin{align*}
		G_k:=-\frac{1}{2}\sum_{j=1}^k\left(\frac{\pt^2}{\pt a_j^2}+\frac{\pt^2}{\pt \bar a_j^2}\right)+
		\frac{1}{2}\sum_{j=1}^k\left(a_j\frac{\pt}{\pt a_j}+\bar a_j\frac{\pt}{\pt \bar a_j}\right).
	\end{align*}
	Just as $(a\pt_a)_\infty$ is compatible with $(a\pt_a)_k$, $G_\infty$ is compatible with $G_k$ in the sense $G_\infty q= G_k q$ for all $q\in\cal P_k(\C)$.

	With $a_j=u_j+iv_j$ and $\bar a_j=u_j+iv_j$, and a little bit of algebra, we have
	\begin{align*}
		G_k
		&=\frac{1}{4}\sum_{j=1}^k\left(-\frac{\pt^2}{\pt u_j^2}+2u_j\frac{\pt}{\pt u_j}\right)+
		\frac{1}{4}\sum_{j=1}^k\left(\frac{\pt^2}{\pt v_j^2}+2v_j\frac{\pt}{\pt v_j}\right).
	\end{align*}
	For now we write for simplicity $u=|\mb u|$, $\Del_{\R^N,u}=\sum_{j=1}^N \frac{\pt^2}{\pt u_j^2}$, $u\pt_u=\sum_{j=1}^N u_j\frac{\pt}{\pt u_j}$ and similarly for $v,\Del_{\R^N,v}$, and $v\pt_v$. Note that any of the listed operators of $u$ will commute with the mentioned operators of $v$.
	
	\medskip 
	
	Fix a $q(\mb a,\ovl{\mb  a})\in \cal P^{\le l}_k(\C)$, note that $e^{\frac {T}{N}\Ga_N}q$ is still a polynomial of variables $a_j,\bar a_j,\ j=1,\dots, k$ by Lem.~\ref{lem:power-series-hyperbolic}(ii). Thus, the continuity of the exponential operator implies 
	\begin{align}\label{eqn:lim-exp-Ga-N}
		\lim_{N\to \infty}e^{\frac{T}{N}\Ga_N} q = e^{TG_k} q.
	\end{align}
	
	By Prop.~\ref{prop:heat-integral}, the convergence of the spherical measure $\ovl{\sig}^{N-1}$ to the Gaussian $\mu^\infty_1$ in Thm.~\ref{thm:limitsph}, and \eqref{eqn:lim-exp-Ga-N}, we then have
	\begin{align*}
		&\lim_{N\to\infty} \int_{Q^{N-1}(\sqrt{N})} q(\mb a,\ovl{\mb a})\,d\nu^{N-1}_{T}
		=\lim_{N\to\infty} \int_{S^{N-1}(\sqrt{N})} \left(e^{\frac{T}{N}\Ga_N}q\right)(\mb a(\mb x,\mb 0),\ovl{\mb a}(\mb x,\mb 0))\,d\ovl{\sig}^{N-1}(\mb x)\\
		&=\int_{\R^\infty} \left(e^{TG_\infty}q\right)(\mb a(\mb x,\mb 0),\ovl{\mb a}(\mb x,\mb 0))\,d\mu^\infty_1(\mb x)
		=(2\pi)^{-\frac{k}{2}}\int_{\R^k} \left(e^{TG_k}q\right)(\mb a(\mb x,\mb 0),\ovl{\mb a}(\mb x,\mb 0))\,e^{-\frac{\mb x^2}2}\,d\mb x\\
		&=(2\pi)^{-\frac{k}{2}}\int_{\R^k} \left(e^{TG_k}q\right)(\mb u+i\mb 0,\mb u-i\mb 0)\,e^{-\frac{\mb u^2}2}\,d\mb u.
	\end{align*}
	The last equality above comes from the fact that $\mb u=\cosh(\frac {p}{b})\mb x$ and $\mb v= \frac{b}{p}\sinh(\frac{p}{b})\mb p$, so if $p=0$, then $\mb u=\mb x$ and $\mb v=\mb 0$.	By Thm.~\ref{thm:heat-real-general}, to integrate a function with respect to the Gaussian $d\mu^N_1$ is to apply the heat operator $e^{\frac{1}{2}\Del_{\R^k}}$ to that function and evaluate it at $\mb 0$, so we have 
	\begin{align*}
		&(2\pi)^{-\frac{k}{2}}\int_{\R^k} \left(e^{TG_k}q\right)(\mb u+i\mb 0,\mb u-i\mb 0)\,e^{-\frac{\mb u^2}2}\,d\mb u
		= \left.e^{\frac{1}{2}\Del_{\R^k,u}}\left(e^{TG_k} q(\mb u+i\mb 0,\mb u-i\mb 0)\right)\right|_{\mb u=\mb 0}\\
		&= \left.\left(e^{\frac{1}{2}\Del_{\R^k,u}}e^{TG_k} q\right)(\mb u+i\mb v,\mb u-i\mb v)\right|_{\mb u=\mb v=\mb 0}.
	\end{align*}
	
	Combining what we have so far, we get
	\begin{align}
		\lim_{N\to\infty} \int_{Q^{N-1}(\sqrt N)} q\,d\nu^{N-1}_T = \left.\left(e^{\frac{1}{2}\Del_{\R^k,u}}e^{TG_k} q\right)(\mb u+i\mb v,\mb u-i\mb v)\right|_{\mb u=\mb v=\mb 0}.\label{eqn:limit-measure-first-answer}
	\end{align}

	This basically is the answer to what the large-$N$ limit of the quadric measure is in terms of exponential operators. For the rest of the proof, we will express the product of exponential operators with the evaluation $\mb u=\mb v=\mb0$ on the right-hand side as a Gaussian measure using the Baker--Campbell--Hausdorff formula in Lem.~\ref{lem:exercise}.
	
	Now, the operators $e^{\frac{1}{2}\Del_{\R^k,u}}$ and $e^{TG_k}$ act boundedly on the vector space $\cal P^{\le l}_k(\C)$, so we can consider them as matrices and compute $e^{\frac{1}{2}\Del_{\R^k,u}}e^{TG_k}$. We have 
	\begin{align*}
		G_k=\frac{1}{4}\left(-\Del_{\R^k,u}+2(u\pt_u)_k+\Del_{\R^k,v}+2(v\pt_v)_k\right),
	\end{align*}
	and $[G_k,\Del_{\R^k,u}]=\frac{1}{2}[(u\pt_u)_k,\Del_{\R^k,u}] = -\Del_{\R^k,u}$ by Lem.~\ref{lem:commutative-relation}, so we use the second identity of Lem.~\ref{lem:exercise} for $X=TG_k$, $Y=\frac{1}{2}\Del_{\R^k,u}$, $\al=-T$ to obtain
	\begin{align*}
		e^{\frac{1}{2}\Del_{\R^k,u}}e^{TG_k} &= e^{TG_k+\frac{T}{2(1-e^{-T})}\Del_{\R^k,u}}\\
		& = \exp\left[\frac{T}{4}\left(\frac{1+e^{-T}}{1-e^{-T}}\Del_{\R^k,u}+2(u\pt_u)_k\right)\right]\exp\left[\frac{T}{4}\left(\Del_{\R^k,v}+2(v\pt_v)_k\right)\right].
	\end{align*}
	Now, using the third identity of Lem.~\ref{lem:exercise} with $X=\frac{T}{2}(v\pt_v)_k$, $Y=\frac{T}{4}\frac{1+e^{-T}}{1-e^{-T}}\Del_{\R^k,u}$, and $\al=-T$, we have  
	\begin{align*}
		\exp\left[\frac{T}{4}\left(\frac{1+e^{-T}}{1-e^{-T}}\Del_{\R^k,u}+2(u\pt_u)_k\right)\right]
		&= e^{\frac{T}{2}(u\pt_u)_k} \exp\left[\frac{1-e^{T}}{-4}\cdot\frac{1+e^{-T}}{1-e^{-T}}\Del_{\R^k,u}\right]\\
		&= e^{\frac{T}{2}(u\pt_u)_k} e^{\frac{e^{T}+1}{4}\Del_{\R^k,u}},
	\end{align*}
	Use the third identity another time with $X=\frac{T}{2}(u\pt_u)_k$, $Y=\frac{T}{4}\Del_{\R^k,v}$, and $\al=-T$, we deduce  
	\begin{align*}
		e^{\frac{T}{4}\left(\Del_{\R^k,v}+2(v\pt_v)_k\right)} 
		= e^{\frac{T}{2}(v\pt_v)_k} e^{\frac{e^T-1}{4}\Del_{\R^k,v}}.
	\end{align*}
	Consequently,
	\begin{align*}
		\left.\left(e^{\frac{1}{2}\Del_{\R^k,u}}e^{TG_k} q\right)\right|_{\mb u=\mb v=\mb 0}
		=  \left.\left( e^{\frac{T}{2}(u\pt_u)_k} e^{\frac{1+e^{T}}{4}\Del_{\R^k,u}}e^{\frac{T}{2}(v\pt_v)_k} e^{\frac{e^T-1}{4}\Del_{\R^k,v}} q\right)\right|_{\mb u=\mb v=\mb 0}
	\end{align*}

	Now, we recall from \ref{def:dilation} that $e^{\frac{T}{2}(u\pt_u)_k}$ and $e^{\frac{T}{2}(v\pt_v)_k}$ act on $q$ by dilating $\mb u\mapsto e^{T/2}\mb u$ and $\mb v\mapsto e^{T/2}\mb v$, respectively, so by letting $n_T=\pi^2(e^{2T}-1)$ together with Thm.~\ref{thm:heat-real-general}, we have 
		\begin{align*}
			&\left.e^{\frac{T}{2}(u\pt_u)_k} e^{\frac{e^T+1}{4}\Del_{\R^k,u}}e^{\frac{T}{2}(v\pt_v)_k} e^{\frac{e^T-1}{4}\Del_{\R^k,v}} q\right|_{\mb u=\mb v=\mb 0}\\
			&= \left.n_T^{-\frac{k}{2}}e^{\frac{T}{2}(u\pt_u)_k}e^{\frac{T}{2}(v\pt_v)_k}\int_{\R^{2k}} q(\mb u'+i\mb v',\mb u'-i\mb v')e^{-\frac{(\mb u-\mb u')^2}{e^T+1}-\frac{(\mb v-\mb v')^2}{e^T-1}}\,d\mb u'\,d\mb v'\right|_{\mb u=\mb v=\mb 0}\\
			&= n_T^{-\frac{k}{2}} \left.\int_{\R^{2k}} q(\mb u'+i\mb v',\mb u'-i\mb v')e^{-\frac{(e^{T/2}\mb u-\mb u')^2}{e^T+1}-\frac{(e^{T/2}\mb v-\mb v')^2}{e^T-1}}\,d\mb u'\,d\mb v'\right|_{\mb u=\mb v=\mb 0}\\
			&= n_T^{-\frac{k}{2}} q(\mb u'+i\mb v',\mb u'-i\mb v')e^{-\frac{{\mb u'}^2}{e^T+1}-\frac{{\mb v'}^2}{e^T-1}}\,d\mb u'\,d\mb v'.
		\end{align*}
	Therefore, \eqref{eqn:limit-measure-first-answer} implies
	\begin{align*}
		\lim_{N\to\infty} \int_{Q^{N-1}(\sqrt{N})} q(\mb a(\mb x,\mb p),\ovl{\mb a}(\mb x,\mb p))\,d\nu^{N-1}_{T}
		&=\int_{\C^\infty} q(\mb u+i\mb v,\mb u-i\mb v)\,d\ga^\infty_{T}(\mb u,\mb v)\\
		&=\int_{\C^k} q(\mb u+i\mb v,\mb u-i\mb v)\,d\ga^k_{T}(\mb u,\mb v)
	\end{align*}
	from which we can easily derive the equality
	\begin{align*}
		\lim_{N\to\infty} \int_{Q^{N-1}(\sqrt{N})} q_1(\mb a(\mb x,\mb p),\ovl{\mb a}(\mb x,\mb p))\, \ovl{q_2(\mb a(\mb x,\mb p),\ovl{\mb a}(\mb x,\mb p))}\,d\nu^{N-1}_{T}=
		\la q_1,q_2\ra_{L^2(\C^\infty,\ga_T^\infty)}
	\end{align*}
	for all $q_1,q_2\in\cal P_k(\C)$. 
\end{proof}
\subsection{Large-$N$ limit of the Segal--Bargmann transforms}\label{sec:limitmap}
Recall the Segal--Bargmann transform on $L^2(S^{N-1}(\sqrt N),\ovl\sig^{N-1})$ from Def.~\ref{def:SB-sphere}. Let us now combine the results in Sects.~\ref{sec:limit-horizontal} and \ref{sec:limit-quad-measure} to establish the large-$N$ limit of the Segal--Bargmann transforms on the spheres, thus showing the commutativity of the diagram at the end of Sect.~\ref{sec:main-res}.
\begin{thm}\label{thm:limitmap}
	Let $q\in \cal P_k(\R)$ be a real polynomial of $k$ variables $x_1\dots, x_k$ considered as a function of $N>k$ variables. Then
	\begin{align*}
		\lim_{N\to\infty} \cal C^{N-1}_{T}\,q=\left(e^{-\frac{T}{2}r\pt_r}e^{\frac{1-e^{-T}}{2}\Del_{\R^k}}q\right)_\C=\mb B_T\,q.
	\end{align*}
\end{thm}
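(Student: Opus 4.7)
The plan is to observe that, for a polynomial $q$ of degree at most $l$ in $k$ variables, the entire computation reduces to linear algebra on the finite-dimensional vector space $\cal P^{\le l}_k(\R)$, and then to chain together results already established earlier in the paper.

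First, I fix $l=\deg q$. By Lem.~\ref{lem:maps-of-sph-lap} together with formula \eqref{eqn:sph-Lap-poly}, the spherical Laplacian $\Del_{S^{N-1}(\sqrt N)}$ restricts to a linear endomorphism of $\cal P^{\le l}_k(\R)$ for every $N>k$, and Prop.~\ref{prop:Sen-Hermite} asserts that these endomorphisms converge to $\mb H$ in any fixed norm on this finite-dimensional space. Since the matrix exponential is continuous and Prop.~\ref{prop:heat-powerseries-sphere} identifies the heat operator $e^{\frac{T}{2}\Del_{S^{N-1}(\sqrt N)}}$ (acting on polynomials) with its termwise power-series expansion, we obtain convergence $e^{\frac{T}{2}\Del_{S^{N-1}(\sqrt N)}} q \to e^{\frac{T}{2}\mb H} q$ in $\cal P^{\le l}_k(\R)$ as $N\to\infty$.

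Next, the holomorphic extension map $(\,\cdot\,)_\C\colon \cal P^{\le l}_k(\R)\to \cal H\cal P^{\le l}_k(\C)$ that substitutes $a_j$ for $x_j$ is a continuous linear map between finite-dimensional spaces, so applying it to the previous convergence gives
\begin{align*}
	\cal C^{N-1}_T\,q=\left(e^{\frac{T}{2}\Del_{S^{N-1}(\sqrt N)}}q\right)_\C \longrightarrow \left(e^{\frac{T}{2}\mb H}q\right)_\C \qquad \tnm{in } \cal H\cal P^{\le l}_k(\C).
\end{align*}
Finally, by exactly the Baker--Campbell--Hausdorff computation carried out in the proof of Thm.~\ref{thm:horizontal_arrow} (namely, applying the third identity of Lem.~\ref{lem:exercise} to $X=-\frac{T}{2}(r\pt_r)_k$, $Y=\frac{T}{2}\Del_{\R^k}$, $\al=T$, with the commutation relation supplied by Lem.~\ref{lem:commutative-relation}, and then passing the dilation $e^{-\frac{T}{2}(r\pt_r)_k}$ through the holomorphic extension via \eqref{eqn:commute}), one has $\bigl(e^{\frac{T}{2}\mb H}q\bigr)_\C = \bigl(e^{-\frac{T}{2}r\pt_r}e^{\frac{1-e^{-T}}{2}\Del_{\R^k}}q\bigr)_\C = \mb B_T\,q$, which closes the chain.

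The main obstacle, such as it is, lies not in the convergence itself but in packaging $\Del_{S^{N-1}(\sqrt N)}$ and $\mb H$ as operators on the \emph{same} finite-dimensional vector space so that Prop.~\ref{prop:Sen-Hermite} can be combined with continuity of the matrix exponential. Lem.~\ref{lem:maps-of-sph-lap} is precisely the tool that makes this packaging possible, and once this reduction is in place the remainder is a routine string of identities already proved.
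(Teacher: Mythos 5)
Your proposal is correct and follows essentially the same route as the paper's own proof: reduce to the finite-dimensional space of degree-bounded polynomials via Lem.~\ref{lem:maps-of-sph-lap}, invoke Prop.~\ref{prop:Sen-Hermite} and continuity of the matrix exponential together with Prop.~\ref{prop:heat-powerseries-sphere}, and then reuse the Baker--Campbell--Hausdorff identity from the proof of Thm.~\ref{thm:horizontal_arrow}. Your version is in fact slightly more careful than the paper's, since you make explicit the restriction to $\cal P^{\le l}_k(\R)$ needed before speaking of continuity of the exponential.
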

\begin{proof}
	First, we need to understand what the equality in the statement of the theorem means. We recall that from Lem.~\ref{lem:maps-of-sph-lap} that the operator $\Del_{S^{N-1}(\sqrt N)}$ can be considered as a map from  $\cal P_k(\R)$ to itself, and since for any $q\in\cal P_k(\R)$
	\begin{align*}
		\cal C^{N-1}_{T}\, q= \left(e^{\frac{T}{2}\Del_{S^{N-1}(\sqrt N)}}\, q\right)_\C .
	\end{align*}
	the map $\cal C^{N-1}_{T}$ can also be viewed as self-map on $\cal P_k(\R)$. The right-hand side can also be expressed as a power series, from Thm.~\ref{prop:heat-powerseries-sphere}. Hence, by the continuity of the exponential operator and Prop.~\ref{prop:Sen-Hermite}, we have 
	\begin{align*}
		\lim_{N\to\infty} \cal C^{N-1}_{T}\,q = \left({e^{\frac{T}{2}\mb H}} q\right)_\C 
		=\left(e^{\frac{T}{2}(\Del_{\R^k}-(r\pt_r)_k)}\,q\right)_\C.
	\end{align*}
	The second equality above comes from the compatibility of $\mb H$ with $\Del_{\R^k}-(r\pt_r)_k$. 
	From the proof of Thm.~\ref{thm:horizontal_arrow}, we have 
	\begin{align*}
		\mb B_T\, q= \left(e^{\frac{T}{2}\mb H}q\right)= \left(e^{-\frac{T}{2}(r\pt r)_k} e^{\frac{1-e^{-T}}{2}\Del_{\R^k}}\,q\right)_\C,
	\end{align*}
	so
	\begin{align*}
		\lim_{N\to\infty} \cal C^{N-1}_{T}\,q=\mb B_T\, q
	\end{align*}
	as we wanted.
\end{proof}

\bibliographystyle{plain}
\bibliography{thesis-cite.bib}    
\end{document}